\documentclass[12pt]{amsart}
\usepackage{amsmath, amsthm, amssymb,cite}
\usepackage{fullpage}
\usepackage{color}
\usepackage{hyperref}

\newtheorem{theorem}{Theorem}
\newtheorem{lemma}{Lemma}
\newtheorem{proposition}[lemma]{Proposition}
\newtheorem{corollary}[lemma]{Corollary}
\newtheorem{definition}[lemma]{Definition}
\newtheorem{remark}[lemma]{Remark}

\numberwithin{lemma}{section}

\newcommand{\Ez}{E_0}

\newcommand{\Elind}{E^{(2)}_{lin}}

\newcommand{\Ent}{E^{n,(3)}}

\newcommand{\Ennfz}{E^{n}_{NF,0}}
\newcommand{\Ennf}{E^{n}_{NF}}
\newcommand{\Ennfhigh}{E^{n}_{NF,high}}

\newcommand{\M}{\mathfrak M}

\newcommand{\B}{\mathfrak B}
\newcommand{\C}{\mathfrak C}

\numberwithin{equation}{section}

\newcommand{\Z}{{\mathbb Z}}
\newcommand{\tW}{{\tilde W}}
\newcommand{\tQ}{{\tilde Q}}
\newcommand{\ttW}{\tilde{\tilde W}}
\newcommand{\ttQ}{\tilde {\tilde Q}}

\renewcommand{\H}{{\mathcal H }}
\renewcommand{\AA}{\mathbf A}
\newcommand{\WH}{{\mathcal{WH} }}

\renewcommand{\SS}{\mathbf S}


\newcommand{\tG}{\tilde{G}}
\newcommand{\ttG}{\tilde{\tilde{G}}}
\newcommand{\ttK}{\tilde{\tilde K}}
\newcommand{\tK}{\tilde{K}}
\newcommand{\ttw}{\tilde{\tilde w}}
\newcommand{\ttq}{\tilde{\tilde q}}


\newcommand{\sgn}{\mathop{\mathrm{sgn}}}

\newcommand{\dH}{{\dot{\mathcal H} }}
\newcommand{\at}{\left\langle \frac{t}{\alpha} \right\rangle}

\newcommand{\W}{{\mathbf W}}

\newcommand{\err}{\text{\bf err}}

\newcommand{\uu}{{\mathbf u}}

\newcommand{\qq}{{\mathbf q}}
\newcommand{\ww}{{\mathbf w}}
\renewcommand{\ggg}{{\mathbf k}}


\begin{document}

\title{The lifespan of small data solutions in two dimensional capillary water
  waves }

\author{Mihaela Ifrim}
\address{Department of Mathematics, University of California at Berkeley}
\thanks{The first author was supported by the Simons Foundation}
\email{ifrim@math.berkeley.edu}

\author{ Daniel Tataru}
\address{Department of Mathematics, University of California at Berkeley}
 \thanks{The second author was partially supported by the NSF grant DMS-1266182
as well as by the Simons Foundation}
\email{tataru@math.berkeley.edu}

\begin{abstract}
  This article is concerned with the incompressible, irrotational
  infinite depth water wave equation in two space dimensions, without
  gravity but with surface tension.  We consider this problem
  expressed in position-velocity potential holomorphic coordinates,
  and prove that small data solutions have at least cubic lifespan
while small localized data leads to global solutions.  
\end{abstract}

\maketitle

\section{Introduction}

We consider the incompressible,  infinite depth water wave
equation in two space dimensions, without gravity but with surface
tension. This is governed by the incompressible Euler's equations
with boundary conditions on the water surface. 

We first describe the equations. We denote the water domain at time $t$ by $\Omega(t)$, and the water
surface at time $t$ by $\Gamma(t)$. We think of $\Gamma(t)$ as being
either asymptotically flat at infinity or periodic. The fluid velocity
is denoted by $u$ and the pressure is $p$. Then $u$ solves the 
Euler equations inside $\Omega(t)$, 
\begin{equation}
\left\{
\begin{aligned}
& u_t + u \cdot \nabla u = \nabla p 
\\
& \text{div } u = 0
\\
& u(0,x) = u_0(x),
\end{aligned}
\right.
\end{equation}
while on the boundary we have the dynamic boundary condition
\begin{equation}
 p = -2 \sigma {\bf H}  \ \ \text{ on } \Gamma (t ),
\end{equation}
and the kinematic boundary condition 
\begin{equation}
\partial_t+ u \cdot \nabla \text{ is tangent to } \bigcup \Gamma (t).
\end{equation}
Here ${\bf H}$ is the mean curvature of the boundary and $\sigma$
represents the surface tension.

Under the additional assumption that the flow is irrotational, we can
write $u$ in terms of a velocity potential $\phi$ as $u = \nabla
\phi$, where $\phi$ is harmonic with appropriate decay at infinity.
Thus $\phi$ is determined by its trace on the free boundary
$\Gamma (t)$. Denote by $\eta$ the height of the water surface as a
function of the horizontal coordinate. Following Zakharov's formulation, we introduce $\psi =\psi (t,x)\in \mathbf{R}$ to be the trace of the velocity potential $\phi$ on the boundary,
$\psi (t,x)=\phi (t,x,\eta(t,x))$. Then the fluid dynamics can be expressed in terms of a one-dimensional evolution of  the pairs of variables
$(\eta,\psi)$, namely 
\begin{equation}
\left\{
\begin{aligned}
& \partial_t \eta - G(\eta) \psi = 0 \\
& \partial_t \psi -{\bf H}(\eta)  +\frac12 |\nabla \psi|^2 - \frac12 
\frac{(\nabla \eta \cdot \nabla \psi +G(\eta) \psi)^2}{1 +|\nabla \eta|^2} = 0 .
\end{aligned}
\right.
\end{equation}
This is the Eulerian formulation of the capillary  water wave equations.

\subsection{Water waves in holomorphic coordinates.}
One main difficulty in the study of the above equation is the presence
of the Dirichlet to Neumann operator $\mathcal D$, which depends on
the free boundary.  This is one of the motivations for our choice to
use a different framework to study these equations, namely the
holomorphic (conformal) coordinates. To the best of our knowledge, these were
first used by Ovsjannikov~\cite{MR0347218} in the study of the dynamical 
problem, though they had been used much earlier for the study of static configurations
(solitons). Later, these coordinates were further
developed by Wu~\cite{MR1471885} and
Dyachenko-Kuznetsov-Spector-Zakharov~\cite{DYACHENKO199673} in the
context of gravity waves. In \cite{MR1471885} these coordinates are
used in combination with the Lagrangian coordinates in order to
develop the well-posedness theory for gravity waves.  More recently,
the holomorphic (conformal) coordinates were heavily exploited by the
authors in earlier work \cite{HIT}, \cite{IT} in the context of two
dimensional gravity water waves. In particular, \cite{HIT} is where a
complete well-posedness theory for gravity waves is developed in
holomorphic coordinates.

One can view the choice of coordinates (e.g. Lagrangian, Eulerian, holomorphic)
as gauge fixing for the gauge group of changes of coordinates; then it becomes
natural to study problems in the most favorable gauge.

The holomorphic coordinates are defined via a conformal map
$\mathcal{F}: \mathbb{H} \to \Omega(t)$, where $\mathbb{H}$ is the
lower half plane, $\mathbb{H}:=\left\lbrace \alpha -i\beta\ : \ \beta
  > 0 \right\rbrace $. Here $\mathcal{F}$ maps the real line into the
free boundary $\Gamma (t)$. Thus the real coordinate $\alpha$
parametrizes the free boundary, which is denoted by $Z(t,\alpha)$. In
order to uniquely determine the parametrization we impose a boundary
condition at infinity,
\[
\lim_{\alpha \to \infty} Z(t,\alpha) - \alpha = 0.
\]
In the periodic case we simply ask for $Z(t,\alpha) - \alpha $ to be periodic. We call these the holomorphic coordinates.

In order to describe the velocity potential $\phi$ we use the
 function $Q= \phi+i\psi$ where $\psi$ represents the harmonic
 conjugate of $\phi$. Both functions $Z-\alpha$ and $Q$ admit bounded
 holomorphic extensions into the lower half plane, which implies that
 their Fourier transforms are supported in $(-\infty,0]$.  We call
 such functions holomorphic functions. They can be described by the relation $Pf = f$,
where  $P$ represents the projector operator  to negative frequencies. The water wave equation will  define a flow in the class of holomorphic functions
for the pair of variables $(W,Q)$ where $W = Z-\alpha$.

A full derivation of the holomorphic form of gravity water waves was
included in an appendix to the paper \cite{HIT}. The minor
modifications which are necessary for the periodic case are also
described there. The equations with surface tension accounted are
quite similar, and are derived in the same fashion:
\begin{equation}
\label{ww2d}
\left\{
\begin{aligned}
& W_t + F (1+W_\alpha) = 0\\
& Q_t + F Q_\alpha  + P\left[ \frac{|Q_\alpha|^2}{J}\right] -\sigma P\left[ \frac{-i}{2+W_{\alpha}+\bar{W}_{\alpha}}\frac{d}{d\alpha}\left( \frac{W_{\alpha}-\bar{W}_{\alpha}}{\vert 1+W_{\alpha}\vert }\right)\right]  = 0, \\
\end{aligned} 
\right.
\end{equation}
where
\begin{equation}
\label{rww2d}
 F = P\left[\frac{Q_\alpha - \bar Q_\alpha}{J}\right], \qquad J = |1+W_\alpha|^2.
 \end{equation}
Simplifying we obtain the fully nonlinear system
\begin{equation}\label{ngst}
\left\{
\begin{aligned}
& W_t + F (1+W_\alpha) = 0 \\
& Q_t + F Q_\alpha  + P\left[ \frac{|Q_\alpha|^2}{J}\right] +i\sigma P\left[ \frac{W_{\alpha \alpha}}{J^{1/2}(1+W_{\alpha})}-\frac{\bar{W}_{\alpha \alpha}}{J^{1/2}(1+\bar{W}_{\alpha})}\right]  = 0.\\
\end{aligned} 
\right.
\end{equation}
The problem \eqref{ngst} is invariant with respect to translations, and also 
with respect to the scaling
\begin{equation}\label{scaling}
(W(t,\alpha), Q(t,\alpha)) \to (\lambda^{-1} W(\lambda^\frac32 t,\lambda \alpha), 
\lambda^{-\frac12} Q(\lambda^\frac32 t,\lambda \alpha)).
\end{equation}
The Hamiltonian for this system is 
\begin{equation*}
\mathcal{H}(W, Q)=\int \Im (Q\bar{Q}_{\alpha})
+\frac{1}{4}\sigma (J^{\frac{1}{2}} - 1 -\Re W_\alpha) \, d\alpha.
\end{equation*}
Since $W$ and $Q$ only appear in differentiated form in the equations above,
differentiating with respect to $\alpha$ yields a self contained 
system for $(W_\alpha,Q_\alpha)$, namely
\begin{equation*}
\left\{
\begin{aligned}
  &W_{\alpha t} + b W_{\alpha \alpha} +
    \frac{1}{1+\bar W_\alpha}\left(Q_{\alpha\alpha} - \frac{Q_\alpha}{1+W_\alpha}
      W_{\alpha \alpha}\right) = - (1+W_\alpha) \bar F_\alpha
- \left[\frac{\bar Q_\alpha}{1+\bar W_\alpha}\right]_\alpha 
 \\
  &Q_{t\alpha} + b Q_{\alpha\alpha}  +
    \frac{1}{1+\bar W_\alpha} \frac{Q_\alpha}{1+W_\alpha}\!\!\left(\!\!Q_{\alpha\alpha} - \frac{Q_\alpha}{1+W_\alpha}  W_{\alpha \alpha}\!\! \right)
        +i\sigma P\left[ \frac{W_{\alpha \alpha}}{J^{1/2}(1+W_{\alpha})}\right]  _{\alpha}=
\\ & \qquad \qquad \qquad  \qquad \qquad \qquad \quad \  i\sigma P \left[\frac{\bar{W}_{\alpha \alpha}}{J^{1/2}(1+\bar{W}_{\alpha})}\right]_{\alpha}  - Q_\alpha \bar F_\alpha+ \bar P\left[ \frac{|Q_\alpha|^2}{J}\right]_\alpha ,
\end{aligned}
\right.
\end{equation*}
where the advection velocity $b$ is given by 
\begin{equation}\label{b-def}
b := 2 \Re P\left[ \frac{Q_\alpha}{J}\right].
\end{equation}
The terms on the right are mostly antiholomorphic and can be viewed as
lower order when projected on the holomorphic functions. Examining the
expression on the left one easily identifies the leading surface
tension term in the second equation, added to a first order system.
The first order system is degenerate, and has a double speed $b$. Then
it is natural to diagonalize it. This is done exactly as in the case
of gravity waves using the operator
\begin{equation}
\AA(w,q) := (w,q - Rw), \qquad R := \frac{Q_\alpha}{1+W_\alpha}.
\label{defR}
\end{equation}
The factor $R$ above has an intrinsic meaning, namely it is the
complex velocity restricted on the water surface. We also remark that
\[
\AA(W_\alpha,Q_\alpha) = (\W,R), \qquad \W : = W_\alpha.
\]
Thus, the pair $(\W,R)$ diagonalizes the differentiated
system. Indeed, a direct computation yields the self-contained system
\begin{equation} \label{diff-ngst}
\left\{
\begin{aligned}
 & \W_{ t} + b \W_{ \alpha} + \frac{(1+\W) R_\alpha}{1+\bar \W}   =  (1+\W)M
\\
& R_t + bR_\alpha +\frac{ i a}{1+\W} + \frac{ i\sigma}{1+\W}P\left[ \frac{\W_{ \alpha}}{J^{1/2}(1+\W)}\right]_{\alpha}  =  \frac{ i\sigma}{1+\W} P \left[\frac{\bar{\W}_{ \alpha}}{J^{1/2}(1+\bar{\W})}\right]_{\alpha},
\end{aligned}
\right.
\end{equation}
where the real {\em frequency-shift} $a$ is given by
\begin{equation}
a := i\left(\bar P \left[\bar{R} R_\alpha\right]- P\left[R\bar{R}_\alpha\right]\right),
\label{defa}
\end{equation}
and  the auxiliary function $M$ is given by
\begin{equation}\label{M-def}
M :=  \frac{R_\alpha}{1+\bar \W}  + \frac{\bar R_\alpha}{1+ \W} -  b_\alpha =
\bar P [\bar R Y_\alpha- R_\alpha \bar Y]  + P[R \bar Y_\alpha - \bar R_\alpha Y].
\end{equation}
Here we used the notation $Y:=\dfrac{W_{\alpha}}{1+W_{\alpha}}$. Both equations \eqref{ngst} and \eqref{diff-ngst} will be used in the sequel.
\begin{remark}
  For convenience we have written the equations with the surface
  tension $\sigma$ included. However, in our case $\sigma$ can be
  scaled out, so through the rest of the paper we will simply set $\sigma = 1$.
\end{remark}

\subsection{Prior work.}
First of all, there is a large body of work devoted to the local well-posedness in the Cauchy
problem for water wave equations  in all combinations, with or without gravity, surface 
tension, finite or infinite depth, periodic or not.

This begins with the early work of Nalimov~\cite{MR0609882} (infinite bottom)
and Yoshihara~\cite{MR728155,MR660822}, (finite bottom, with/without tension)
for the small data problem. The first well-posedness results
for large data are due to Wu~\cite{MR1641609,MR1471885} (no surface tension).
Further results were later obtained by Christodoulou-Lindblad~\cite{MR1780703},
Lannes~\cite{MR2138139}, Coutand and Shkoller~\cite{MR2291920}, Lindblad~\cite{MR1934619},
Beyer-G\"unther~\cite{MR1637554}, Schweizer~\cite{MR2185062}
Ambrose and Masmoudi \cite{MR2162781}, Shatah and Zeng~\cite{MR2388661}. More
recent work has also considered the role of dispersion in such
problems, beginning with Christianson, Hur and Staffilani\cite{MR2763354} and
continuing to the work of Alazard, Burq and
Zuily~\cite{MR2931520, MR2805065,2014arXiv1404.4276A}, 
which represents the current state of
the art. In particular, it is the result in \cite{MR2805065} that we rely on 
for local well-posedness, though even Nalimov's result would serve 
if we were to restrict ourselves to more regular data.

There is also considerable work done for traveling waves, which are
known to exist in both two and three dimensions, see for instance
Amick-Kirchg\"assner~\cite{MR963906}, Groves-Sun \cite{MR2379653}, and
Buffoni-Groves-Sun-Wahlen~\cite{MR2997362},
Groves-Wahlen~\cite{MR2847283}. However, all these results are
concerned with a certain regime ($\beta > \frac13$) in
gravity-capillary problems with either finite or infinite bottom. For
a broader view, see also the recent book of
Constantin~\cite{MR2867413}. To the best of our knowledge, there are
no results on the existence of pure capillary solitary waves in our
setting of infinite depth.

There are fewer long time  results for the small data problem.
 The first results in this direction are due
to Wu~\cite{MR2507638}, who proved almost global existence of gravity waves in
dimension two. The similar three dimensional problem 
was solved independently by Germain-Masmoudi-Shatah~\cite{MR2993751}
and  Wu ~\cite{MR1641609}. Closer to our goal here, the three dimensional 
capillary water wave problem was shown to have global solutions
by Germain-Masmoudi-Shatah~\cite{MR3318019}.

Even more recently, Wu's two dimensional almost global result was
improved to a global result in independent work by
Ionescu-Pusateri~\cite{MR3121725,2013arXiv1303.5357I} and
Alazard-Delort~\cite{MR3460636,MR3429478}. Shortly
after, the authors were able to provide shorter and simpler proofs,
first for the almost global result in joint work with
Hunter~\cite{HIT}, and next for the global result \cite{IT}. The
present article is a natural continuation of this work.

We also note that some preliminary results on a related simplified model problem
were very recently announced\footnote{The full article \cite{IP-model}
was only posted (shortly) after this work.}  by Ionescu and Pusateri\footnote{F. Pusateri,
talk in the Banff workshop of May 11-16, 2014}. Their model only deals 
with the high frequency part of the problem, truncating off the 
low frequency part of the nonlinearity and also removing  the leading order 
quasilinear effects.  Their approach is based on a direct implementation of 
our ``modified energy method'' as originally introduced in \cite{HITW}.

\subsection{ Main results.}

We first describe the function spaces that we use in order to describe
the solutions to the capillary water waves.  The scale $\dH^s$ of
function spaces used in this article is defined by
\[
\dH^s = \dot H^s \times \dot H^{s-\frac12}.
\]
Here the energy corresponds to $s = 1$, while the scaling is at $s = \frac32$.
On occasion we will also use the corresponding inhomogeneous spaces,
\[
\H^s =  H^s \times  H^{s-\frac12}.
\]

Our main goal here is to study the long time persistence of
small data solutions. What we do not do here is to consider the local
well-posedness problem; instead, for that we will rely on the local
well-posedness result of Alazard-Burq--Zuily~\cite{MR2805065}. Their
result is stated in Eulerian coordinates. However, at least for small
data, it is straightforward to transfer it to the holomorphic coordinates,
as the transition map is bounded in Sobolev spaces. 
For our purposes here it suffices to restrict ourselves to integer $s$; then we have:

\begin{theorem}[Alazard-Burq--Zuily~\cite{MR2805065}] The system \eqref{ngst} is locally 
well-posed for small data in $\H^k$ for $k \geq  4$. 
\end{theorem}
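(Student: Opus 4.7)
The strategy is to invoke the Alazard-Burq-Zuily well-posedness result in Eulerian coordinates and then push it forward to the holomorphic setting via the change of variables described in the introduction. The plan has three ingredients: (i) the Eulerian theorem of \cite{MR2805065}, which gives well-posedness of $(\eta,\psi)$ at the required Sobolev regularity for small data; (ii) a quantitative comparison between the Eulerian unknowns $(\eta,\psi)$ and the holomorphic unknowns $(W,Q)$, showing the transition is bilipschitz on a small ball in the relevant spaces; and (iii) a check that a solution $(\eta,\psi)$ to the Eulerian system corresponds, under this change, to a solution $(W,Q)$ of \eqref{ngst}, so well-posedness transfers.

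For step (ii), the conformal parametrization $Z(t,\alpha)=\alpha+W(t,\alpha)$ is determined by the condition that $\alpha\mapsto(\alpha,\eta(\alpha))$ and $\alpha\mapsto Z(\alpha)$ have the same image, together with $W$ being holomorphic in the lower half-plane and decaying at infinity. Concretely, one writes $\Im Z(\alpha)=\eta(\Re Z(\alpha))$ and inverts this using $\Re Z=\alpha+\Re W$, $\Im W = -\mathcal H\,\Re W$ (Hilbert transform), obtaining a nonlinear equation $\Re W=-\mathcal H\bigl[\eta(\alpha+\Re W)\bigr]$. For small $\eta\in H^k$, a standard contraction argument in $H^k$ solves this and gives $\|W\|_{H^k}\lesssim\|\eta\|_{H^k}$ with the inverse bound $\|\eta\|_{H^k}\lesssim\|W\|_{H^k}$, both uniform in time as long as $\eta$ stays small. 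The relation between $\psi$ and $Q$ is linear once $W$ is known: $Q$ is the holomorphic function on $\mathbb H$ whose imaginary part on $\R$ is $\psi\circ(\Re Z)$, so $Q=(\mathrm{Id}-i\mathcal H)(\psi\circ(\alpha+\Re W))/2$ up to constants, which is similarly bilipschitz on a small ball in $H^{k-\frac12}$. These are all Sobolev-space tame estimates on compositions with a small diffeomorphism of the line; no new smoothing is used.

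For step (iii), one verifies that the Eulerian evolution equations in Zakharov form translate precisely into \eqref{ngst} under the above change of variables. This is the computation carried out in the appendix of \cite{HIT} for gravity waves; the additional surface tension term on $\Gamma(t)$ transforms into the term $i\sigma P[\cdots]$ in \eqref{ngst} by a direct identity for the curvature ${\bf H}$ of $\Gamma(t)$ in terms of $W_\alpha$, namely $2{\bf H}=\Im\bigl(W_{\alpha\alpha}/(J^{1/2}(1+W_\alpha))\bigr)\cdot 2$ up to standard manipulations. Since both sides of this dictionary are Lipschitz on small balls in $\H^k$, existence, uniqueness, and continuous dependence in the holomorphic coordinates follow from the same properties in Eulerian coordinates.

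The only delicate point is making the change of variables bi-Lipschitz uniformly in time at the stated regularity $k\ge 4$; this is where we need the smallness hypothesis, so that $1+W_\alpha$ stays uniformly away from zero and the fixed point for $\Re W$ is nondegenerate. Given this, the small-data well-posedness in $\H^k$ for \eqref{ngst} is immediate from \cite{MR2805065}. We emphasize that no sharp regularity is claimed here: any integer $k\ge 4$ (indeed, any $k$ for which the Eulerian result applies) would suffice, and this is all we will need for the long-time analysis in the remainder of the paper.
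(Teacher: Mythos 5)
Your approach is exactly what the paper intends: the theorem is cited from \cite{MR2805065} and not proved in the text, with the authors merely noting that the transfer from Eulerian to holomorphic coordinates is straightforward for small data because the transition map is bounded in Sobolev spaces — your fixed-point construction of the conformal parametrization and the bilipschitz comparison is the right way to make that remark precise. Two small convention slips to fix in a clean writeup: with the paper's normalization (negative-frequency holomorphic functions, $\Im W=-\mathcal H\,\Re W$) the graph condition $\Im W=\eta(\alpha+\Re W)$ yields $\Re W=+\mathcal H[\eta(\alpha+\Re W)]$ rather than $-\mathcal H$, and since the Eulerian $\psi$ is the trace of the velocity potential while $Q=\phi+i\psi_{\mathrm{conj}}$, it is $\Re Q$, not $\Im Q$, that should equal $\psi\circ(\alpha+\Re W)$ on $\R$.
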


This result includes existence, uniqueness and $C^1$ dependence on
data in a weaker topology (e.g. $\H^{k-1}$). If one allows noninteger $k$,
their result applies in the range $k > 3$. A more precise version of
this result in holomorphic coordinates, which also applies to large
data, will be discussed elsewhere. 

All of our results apply to water waves with data as in the above
theorem. However, the bounds on the solutions will not depend as much
on the low frequency regularity of the data, and we will be able to
phrase that in terms of some homogeneous Sobolev norms.

For the estimates on the persistence of solutions we will follow 
an analogue of the set-up in \cite{HIT} and introduce the control norms
\begin{equation}\label{A-def}
A := \|\W\|_{L^\infty}+\| Y\|_{L^\infty} + \Vert D^{-\frac{1}{2}}R\Vert_{L^{\infty}\cap B^{0,\infty}_{2}},
\end{equation}
respectively
\begin{equation}\label{B-def}
B :=\| D^{\frac{1}{2}}\W_{\alpha}\|_{L^\infty} + \Vert R_{\alpha}\Vert_{L^\infty}.
\end{equation}
We note that $A$ is scale invariant, and scales like the $\dH^\frac32$
norm of $(W,Q)$. $B$ on the other hand, scales like the $\dH^3$ norm
of $(W,Q)$. In the corresponding results in \cite{HIT} we were able to
use $BMO$ norms instead of $L^\infty$ in the corresponding definition
of the $B$ norm. In this regard, the results here are slightly less
accurate.
 
Our first result asserts that the persistence of solutions does not depend on the full Sobolev
norm of the solutions, instead it is controlled by the weaker control norm $B$:

\begin{theorem}
\label{baiatul}
Let $n \geq 4$. Then  solutions to the water wave equation
\eqref{ngst} with $\H^n$ data, which is small in $\dH^4 \cap \dH^1$, persist
with uniform bounds in both $\H^n$ and $\dH^4 \cap \dH^1$
for as long as the time integral $\int B$ remains
small. The same result holds in the periodic setting.
\end{theorem}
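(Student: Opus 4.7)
The plan is to prove Theorem \ref{baiatul} by a standard continuity/bootstrap argument, reducing matters to an a priori energy estimate of the form
\begin{equation*}
\frac{d}{dt} E^{(s)}(t) \lesssim B(t)\, E^{(s)}(t), \qquad s \in \{1, 4, n\},
\end{equation*}
for coercive energy functionals $E^{(s)}$ equivalent, under the smallness of $A$, to the squares of the $\dH^1$, $\dH^4$ and $\H^n$ norms of $(W,Q)$ respectively. Granted this, Gronwall yields $E^{(s)}(t) \lesssim E^{(s)}(0) \exp\bigl(C \int_0^t B\bigr)$, which under the smallness hypothesis on $\int B$ gives the persistence statement, while the local well-posedness result of Alazard--Burq--Zuily allows the continuation.

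As a preliminary step, I would show that smallness in $\dH^1 \cap \dH^4$ controls the scale invariant norm $A$. Each component of $A$ is bounded in one space dimension by a Sobolev interpolant between the homogeneous norms at the $s=1$ and $s=4$ levels: $\|\W\|_{L^\infty}$ and $\|Y\|_{L^\infty}$ by a geometric mean of $\|W\|_{\dot H^1}$ and $\|W\|_{\dot H^4}$, and similarly $\|D^{-1/2}R\|_{L^\infty\cap B^{0,\infty}_2}$ by a geometric mean of $\|Q\|_{\dot H^{1/2}}$ and $\|Q\|_{\dot H^{7/2}}$. Once $A$ is small, all the auxiliary quantities $b$, $a$, $M$, $J^{1/2}$, $R$, $Y$ in \eqref{diff-ngst} are tame, and one can safely treat \eqref{diff-ngst} as a quasilinear evolution for $(\W,R)$.

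The heart of the argument is the construction of the top-order energy. I would take
\begin{equation*}
E^{(n)} = \int \bigl(|\partial_\alpha^{n-1}\W|^2 + |D^{n-3/2} R|^2\bigr)\, d\alpha + E^{(n)}_{\mathrm{cubic}},
\end{equation*}
where the exponents reflect the capillary dispersion relation $\omega(\xi)\sim |\xi|^{3/2}$ and the cubic correction is tailored to cancel the leading non-perturbative terms produced upon differentiating \eqref{diff-ngst} $n-1$ times. The diagonal advection-plus-surface-tension principal part of \eqref{diff-ngst} is symmetric in $(\W,R)$ and integrates to zero in the energy identity, so the time derivative of $E^{(n)}$ reduces to commutators between the coefficients (resp.\ the projection $P$) and the high-order derivatives. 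Each such commutator should be arranged via a paradifferential decomposition so that exactly one derivative lands on a coefficient, yielding a factor bounded by $B$, while the remaining factors reassemble into $E^{(n)}$. The analogous construction, in simplified form, handles $E^{(4)}$, and at the $\dH^1$ level the Hamiltonian $\mathcal{H}(W,Q)$ already provides $E^{(1)}$ up to $O(A\, E^{(1)})$ cubic errors.

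The main obstacle is the clean paradifferential treatment of the surface tension term at top order. Writing $i\sigma P\bigl[\W_\alpha /(J^{1/2}(1+\W))\bigr]_\alpha$ as a paradifferential operator plus a better remainder, one must verify that all remainders and all commutators with $P$ and with the coefficients produce only contributions controlled by $B\cdot E^{(n)}$; the fact that $B$ is defined via $L^\infty$ rather than $BMO$ (in contrast with the gravity case in \cite{HIT}) tightens the margin and forces some of the double commutators to be organized by hand. Once this is carried out at $n = 4$, the extension to general integer $n$ follows by an induction on the number of derivatives that exploits the tame multilinear structure of products of (anti-)holomorphic functions.
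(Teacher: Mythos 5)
Your proof is correct for Theorem~\ref{baiatul} but takes a lighter route than the paper. You rest the bootstrap on a quadratic quasilinear energy estimate $\frac{d}{dt}E^{(s)}\lesssim_A B\,E^{(s)}$, which the paper explicitly notes exists (the ``straightforward'' estimate for the quasilinear functional $E^{n}_{nl}$ mentioned at the start of the proof of Proposition~\ref{t:en=small}) but does not carry out; the paper instead constructs the cubic modified energy $\Ent$ of Proposition~\ref{t:en=small}, satisfying the stronger $\frac{d}{dt}\Ent\lesssim_A AB\,\Ent$, and derives both Theorems~\ref{baiatul} and~\ref{t:cubic} from that single functional. For Theorem~\ref{baiatul}, where $\int B$ is small by hypothesis, Gronwall with the quadratic estimate closes once you control $A$, and your observation that $A$ is a Sobolev interpolant between the $\dH^1$ and $\dH^4$ norms (and that the conserved Hamiltonian already handles the $\dH^1$ level) is exactly right. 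What the extra factor of $A$ buys is Theorem~\ref{t:cubic}: since both $A$ and $B$ are bounded by $\|(W,Q)\|_{\dH^1\cap\dH^4}$, the cubic estimate yields a Riccati inequality $\dot E\lesssim E^2$ and hence an $\epsilon^{-2}$ lifespan, whereas the quadratic estimate alone gives only $\dot E\lesssim E^{3/2}$ and an $\epsilon^{-1}$ time. So your argument is a valid and somewhat simpler proof of this one theorem; the paper's heavier construction is doing double duty. Two smaller comments: the paper builds $\Ent$ uniformly in $n\geq 3$ rather than by induction on $n$; and even your quadratic estimate is not a paradifferential one-liner --- integrating the top-order surface-tension commutator requires the weighted $J^{-(n-1/2)}$ factors and the lower-order correction that appear in the paper's $E_1$, which is what the authors mean when they warn that the quasilinear modification here is ``more involved'' than for gravity waves.
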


In particular, we obtain the following cubic lifespan result:

\begin{theorem}
\label{t:cubic}
  Let $\epsilon \ll 1$.  Assume that the initial data for the equation
  \eqref{ngst} on either $\mathbb{R}$ or $\mathbb{S}^1$ satisfies
\begin{equation}
\|(W(0), Q(0))\|_{ \dH^4 \cap \dH^1 } \leq \epsilon.
\end{equation}
Then the solution exists on an $\epsilon^{-2}$ sized time interval
$I_\epsilon = [0,T_\epsilon]$, and satisfies a similar bound. In
addition, the estimates
\[
\sup_{t \in I_\epsilon} \| (\W(t), R(t))\|_{\dH^n} \lesssim \| (\W(0), R(0))\|_{\dH^n},
\qquad n \geq 1,
\]
hold in the same time interval whenever the right hand side is finite.
\end{theorem}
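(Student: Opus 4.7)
The approach is a bootstrap argument driven by a normal form cubic energy estimate. The key structural feature that makes cubic (rather than merely quadratic) lifespan accessible is the absence of three--wave resonances for the capillary dispersion $\omega(\xi)=|\xi|^{3/2}$: on the constraint $\xi=\xi_1+\xi_2$ one cannot have $\omega(\xi)=\pm\omega(\xi_1)\pm\omega(\xi_2)$, by strict convexity of $x^{3/2}$. This will allow removal of the quadratic non--resonant part of the nonlinearity in the diagonalized system \eqref{diff-ngst} via a normal form transformation on $(\W,R)$, at the cost of cubic remainder terms only.

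The main technical step, and in my view the principal obstacle, will be the construction, for each relevant Sobolev index $s$ (at least $s=1$ and $s=4$), of a modified energy $\E^{(s)}_{NF}(\W,R)$ equivalent to $\|(\W,R)\|_{\dH^s}^2$ in the small data regime and satisfying the cubic differential inequality
\[
\frac{d}{dt}\E^{(s)}_{NF}\;\lesssim\;(A+B)^2\,\E^{(s)}_{NF}.
\]
Starting from the naive $\dH^s$ inner product, one would differentiate along \eqref{diff-ngst} and isolate the finitely many quadratic boundary terms that obstruct a cubic bound. These are absorbed by adding to the base energy explicit cubic corrections, constructed as symmetric bilinear multipliers in $(\W,R)$ whose time derivatives along the linearized capillary flow cancel the unwanted quadratic contributions. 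The non--resonance condition guarantees that the symbols of these correctors are bounded and that the leftover error is genuinely cubic, hence estimable by $(A+B)^2\E^{(s)}_{NF}$ via paradifferential and commutator bounds. This is the analogue, in the capillary setting, of the energy construction carried out for 2D gravity waves in \cite{HIT,IT}.

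Given the cubic energy law, the bootstrap is immediate. Fix a small $c_0>0$ to be chosen and assume on a maximal interval $[0,T_*]\subset[0,c_0\epsilon^{-2}]$ that $\|(\W,R)(t)\|_{\dH^4\cap\dH^1}\le 2\epsilon$. The one--dimensional Sobolev embedding $\dot H^{s}\hookrightarrow L^\infty$ for $s>1/2$, together with interpolation between $\dH^1$ and $\dH^4$, yields $A(t)+B(t)\lesssim\epsilon$ on $[0,T_*]$. Gr\"onwall then gives
\[
\E^{(s)}_{NF}(t)\;\le\;\E^{(s)}_{NF}(0)\exp\Bigl(C\!\int_0^t(A+B)^2\,d\tau\Bigr)\;\le\;C\epsilon^2\,e^{C\epsilon^2 t},
\]
which stays below $(\tfrac32\epsilon)^2$ on all of $[0,c_0\epsilon^{-2}]$ once $c_0$ is chosen small enough. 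This strictly improves the bootstrap hypothesis, extends the solution to the cubic time interval $I_\epsilon=[0,c_0\epsilon^{-2}]$, and closes the first claim. The $\dH^n$ propagation bound for general $n\ge 1$ follows by running the same argument at regularity level $n$: the resulting inequality $\frac{d}{dt}\E^{(n)}_{NF}\lesssim(A+B)^2\E^{(n)}_{NF}$ is linear in $\E^{(n)}_{NF}$ with coefficients depending only on the control norms $A,B$ already shown to be $\lesssim\epsilon$ on $I_\epsilon$, so another application of Gr\"onwall yields the asserted $\dH^n$ bound. The $\H^n$ estimate finally comes from invoking Theorem \ref{baiatul} once the integrated smallness of the control has been verified on $I_\epsilon$ via the same cubic mechanism.
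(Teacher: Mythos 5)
Your overall strategy is the same as the paper's: construct a modified cubic energy, combine with a bootstrap in the control norms $A,B$ and Gronwall, and deduce $\dH^n$ propagation by running the energy argument at regularity $n$. The paper's Proposition~\ref{t:en=small} is exactly the modified energy $\Ent$ you posit, and Theorem~\ref{t:cubic} is derived from it precisely as you outline.

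One factual imprecision in your setup is worth flagging. You assert ``the absence of three-wave resonances \dots by strict convexity of $x^{3/2}$.'' This is overstated: since $x^{3/2}$ is superadditive but not strictly so at the endpoint, $|\xi+\eta|^{3/2}-|\xi|^{3/2}-|\eta|^{3/2}$ vanishes whenever $\xi$ or $\eta$ is zero, so there \emph{are} (degenerate) resonances at frequency zero. This is not a cosmetic point. It is exactly why the normal form symbols in Proposition~\ref{p:nf} (e.g.\ $A^h, B^h$, etc.) have homogeneous, low-frequency singular behaviour, and why the paper works with homogeneous multiplier classes $\M_k$ rather than bounded symbols, controls $\|\W\|_{L^\infty}$ and $\|D^{-1/2}R\|_{L^\infty\cap B^{0,\infty}_2}$ in $A$ rather than just high Sobolev norms, and ultimately must carry out careful low-frequency cancellations. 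For Theorem~\ref{t:cubic} the $\dH^1$ component of the data tames the low frequencies, but a proof that leans on ``no resonances'' to conclude boundedness of the correctors would break down.

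A second, smaller point: the paper's cubic energy estimate has the form $\frac{d}{dt}\Ent\lesssim_A AB\,\Ent$ rather than $(A+B)^2\Ent$. This asymmetry (one scale-invariant factor $A$, one higher-order factor $B$) is what makes Theorem~\ref{baiatul}, namely persistence as long as $\int B$ is small, available — a strictly stronger statement than cubic lifespan. Under your hypotheses $A+B\lesssim\epsilon$ the two are interchangeable for this particular theorem, but you are leaving the sharper conclusion on the table. Finally, your last sentence about $\H^n$ is extraneous: the theorem only claims the homogeneous $\dH^n$ bounds, which follow directly from the cubic energy inequality at level $n$ plus Gronwall, with Theorem~\ref{baiatul} not required.
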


The proof of both theorems above is based on the cubic high frequency energy estimates
in Section~\ref{s:high-en}.

Next we turn our attention to the problem with small localized data,
and show that in this case the solutions are global in time. To state the result we
take advantage of the scale invariance of the equation \eqref{scaling}. This is 
in the spirit of the classical vector field method, first introduced by Klainerman~\cite{Kl}
in the context of nonlinear wave equations. The generator of the scaling group is 
\[
\SS(W,Q) = ((S-1)W,(S-\frac12)Q), \qquad S = \frac32 t \partial_t + \alpha \partial_\alpha.
\]
Then $\SS(W, Q)$ solve the linearized water wave equations, which are described
later in Section~\ref{s:lin}. 

Unlike the case of nonlinear wave equations, here it suffices
to use the scaling field just once, without reiterating it. This is  because this is a
one dimensional problem, with a better range of exponents for Sobolev embeddings.
This was first observed by Ionescu-Pusateri~\cite{2013arXiv1303.5357I} in the context of 
one dimensional gravity  waves.

In order to obtain sufficient pointwise decay we need to use lower Sobolev norms,
both for $(W,Q)$ and for $\SS(W,Q)$. Precisely, we will work in the space 
$\dH^\sigma$ where $0 < \sigma < \frac18$  is a sufficiently small parameter.
Then we  define the weighted energy
\begin{equation}\label{WH}
  \|(W,Q)(t)\|_{\WH}^2 :=  \|(W,Q)(t)\|_{\dH^{10} \cap \dH^\sigma}^2 +  
\|\SS(W,Q)(t)\|_{\dH^1 \cap \dH^\sigma}^2.
\end{equation}
Here we will use a stronger control norm, namely
\begin{equation}
\|(W,Q)\|_{X} := \sum_{j=0,1,2,3,4} \| D^{j+\frac12} W\|_{L^\infty} + \|D^j Q\|_{L^\infty}.
\end{equation}
The $L^\infty$ bound for $W$ will be also controlled but in a more loose fashion,
and  will play a lesser role. Our global result is as follows:
\begin{theorem} \label{t:almost}
For each initial data
$(W(0),Q(0))$ for the system \eqref{ngst} satisfying
\begin{equation}\label{data}
\|(W,Q)(0)\|_{\WH}^2  \leq \epsilon \ll 1,
\end{equation}
the solution is global 
and satisfies
\begin{equation}\label{energy}
\|(W,Q)(t)\|_{\WH}^2  \leq C \epsilon t^{C\epsilon^2},
\end{equation}
as well as 
\begin{equation}\label{point}
\|(W,Q)\|_{X}  \leq C \frac{\epsilon}{\sqrt{t}},
\end{equation}
and 
\begin{equation}\label{pointw}
\|W\|_{L^\infty}  \leq C \epsilon (|\alpha|+t^\frac23)^{\sigma-\frac12}.
\end{equation}

\end{theorem}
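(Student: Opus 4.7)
The overall plan is a bootstrap argument combining high and low frequency cubic energy estimates with a pointwise decay bound obtained via a wave packet testing method adapted to the capillary dispersion relation $\omega(\xi)=|\xi|^{3/2}$. We fix $T>0$ and make the bootstrap assumption that \eqref{point} and \eqref{pointw} hold on $[0,T]$ with constant $2C$; the goal is then to recover the same bounds with constant $C$, together with the energy bound \eqref{energy}.

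For the energy bound the key input is the cubic high frequency energy estimate underlying Theorem~\ref{baiatul}, sharpened to the form
\begin{equation*}
\frac{d}{dt}\|(\W,R)(t)\|_{\dH^n}^2 \lesssim (A+B)^2 \|(\W,R)(t)\|_{\dH^n}^2,
\end{equation*}
which is the output of the normal form/partial symmetrization analysis in Section~\ref{s:high-en}. Under the bootstrap assumption one has $A+B\lesssim \epsilon t^{-1/2}$ (the $X$ norm controls exactly the quantities appearing in $A$ and $B$), so $\int_1^T (A+B)^2\,dt\lesssim \epsilon^2 \log T$, and Gronwall yields exactly $\|(\W,R)(t)\|_{\dH^n}\lesssim \epsilon t^{C\epsilon^2}$ for $n=10$ and $n=\sigma$. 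The same estimate is needed for the linearized variables $\SS(W,Q)$ at the lower Sobolev level $\dH^1\cap\dH^\sigma$: since $\SS(W,Q)$ solves the linearized water wave system around $(W,Q)$, the coefficients of that system are controlled precisely by $A$ and $B$, so a parallel cubic linearized energy estimate closes the $\WH$ norm bound.

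The pointwise bound \eqref{point} is the main obstacle and requires the analogue of the wave packet testing argument developed for gravity waves in \cite{IT}, now adapted to the capillary dispersion. Writing the diagonalized linear flow as $(\partial_t + iL)\mathbf u=0$ with $L=-i|D|^{3/2}P$ acting on holomorphic functions, the group velocity at frequency $\xi<0$ is $v(\xi)=-\tfrac32|\xi|^{1/2}$; along each ray $\alpha/t=v(\xi_0)$ one solves for the stationary frequency $\xi_0(\alpha,t)=-\tfrac49(\alpha/t)^2$ and introduces a one-parameter family of wave packets $\Phi_v$ concentrated at spatial scale $t^{1/2}|\xi_0|^{-3/4}$ near $\alpha\sim vt$. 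Pairing $\mathbf u(t)$ with $\Phi_v$ defines an asymptotic profile $\gamma(t,v)$, and the key computation shows that $\gamma$ satisfies a modified scattering ODE of the form
\begin{equation*}
\dot\gamma(t,v) = i c(v)|\gamma(t,v)|^2\gamma(t,v)/t + O_{L^\infty_v}(\epsilon^3 t^{-1-\delta}),
\end{equation*}
where the error is estimated using the $\WH$ energy of $(W,Q)$ and $\SS(W,Q)$ via the stationary phase/vector field identities $v\partial_v\Phi_v \approx S\Phi_v$, that are standard in this framework. Modulus conservation for the ODE then gives $|\gamma(t,v)|\lesssim \epsilon$ uniformly, from which one recovers \eqref{point} with the improved constant by inverting the wave packet decomposition and paying a factor $t^{-1/2}$ coming from the packet normalization.

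Finally, the low frequency pointwise bound \eqref{pointw} on $W$ itself is obtained by combining the pointwise decay on the scale-invariant quantities $D^{1/2}W$ and $R$ with the $\dH^\sigma$ control on both $(W,Q)$ and $\SS(W,Q)$. Indeed, on the frequency region $|\xi|\gtrsim t^{-2/3}$ (equivalently $|\alpha|\lesssim t^{2/3}$) one integrates \eqref{point} in $\alpha$, while on the complementary low frequency region one interpolates between the weighted $\dH^\sigma$ bound and the control on $S W$, producing the Bernstein-type decay $(|\alpha|+t^{2/3})^{\sigma-1/2}$. Putting together the recovered \eqref{point}, \eqref{pointw} and the Gronwall-type energy bound \eqref{energy} closes the bootstrap, and the continuity argument in $T$ then upgrades the local solution provided by Alazard-Burq-Zuily to a global one.
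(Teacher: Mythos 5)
Your proposal follows the same overall architecture as the paper (bootstrap; cubic high-frequency modified energy; linearized estimates for $\SS(W,Q)$; wave packet testing for pointwise decay), but it contains a genuine gap at the low-frequency energy step. You claim that the same Gronwall argument based on the quasilinear modified energy estimate ``yields exactly $\|(\W,R)(t)\|_{\dH^n}\lesssim \epsilon t^{C\epsilon^2}$ for $n=10$ and $n=\sigma$.'' This is wrong: the quasilinear modified energy of Proposition~\ref{t:en=small} is constructed only for $n\geq 3$, and there is no reasonable way to extend it down to $n=\sigma$ with $\sigma<\frac18$. The low-frequency $\dH^\sigma$ bound is a genuinely separate problem. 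The paper treats it as semilinear, constructing normal form variables $(\ttW,\ttQ)$ that solve a cubic equation; but even this quadratic normal form is insufficient because the transformation introduces \emph{undifferentiated} $W$ (for which no decaying $L^\infty$ bound is available and no $L^2$ bound at low frequency), and because the $\tK$ term must be estimated in the negative Sobolev space $\dot H^{\sigma-\frac12}$. One therefore needs a \emph{further cubic normal form correction} (Proposition~\ref{p:low-en}) removing the $lhh\to h$ and $hhh\to l$ non-resonant interactions; this is a substantial piece of the argument that your proposal does not contain.

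A related, smaller omission: the statement ``a parallel cubic linearized energy estimate closes the $\WH$ norm bound'' for $\SS(W,Q)$ glosses over exactly the same difficulty. The $\dH^1$ linearized estimate works via a quasilinear modified energy, but the $\dH^\sigma$ linearized bound again requires a normal-form treatment, and the two bounds are not independent (the $\dH^1$ estimate is not self-contained because of the undifferentiated $w$ in $r=q-Rw$; it must be coupled with the $\dH^\sigma$ bound). You also skip the Klainerman--Sobolev intermediate step: the paper first derives pointwise bounds with a $t^{c\epsilon^2}$ loss from the energy estimates and the $\SS(W,Q)$ bounds, and uses these both to initialize the wave-packet ODE and to dispose of the elliptic region $|\alpha|/t\notin[t^{-\delta},t^{\delta}]$; the wave-packet argument alone does not cover all of space-time. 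Finally, a minor factual slip: the packet width is $t^{1/2}v^{-1/2}\sim t^{1/2}|\xi_0|^{-1/4}$, consistent with the general rule $\delta\alpha\sim\sqrt{t\,\omega''(\xi_0)}$ for $\omega(\xi)=|\xi|^{3/2}$, not $t^{1/2}|\xi_0|^{-3/4}$.
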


The proof of the theorem uses a bootstrap argument. Precisely, it suffices
to prove the conclusion of the theorem under the additional bootstrap assumptions
\begin{equation}\label{energy-boot}
\|(W,Q)(t)\|_{\WH}^2  \leq C \epsilon t^{\frac1{18}},
\end{equation}
as well as 
\begin{equation}\label{point-boot}
\|(W,Q)\|_{X}  \leq 2C \frac{\epsilon}{\sqrt{t}},
\end{equation}
and 
\begin{equation}\label{pointw-boot}
\|W\|_{L^\infty}  \leq 2C \epsilon (|\alpha|+t^\frac23)^{\sigma-\frac12},
\end{equation}
for a fixed large universal constant $C$ independent of $\epsilon$, 
and for $\epsilon$ small enough.

Now we are able to divide the proof into several steps. 

\begin{itemize}
\item[(i)] The high frequency part of the $(W,Q)$
energy estimate (i.e., the $\dH^{10}$ bound) in \eqref{energy} is proved in Section~\ref{s:high-en}, using our 
method of {\em quasilinear modified energy estimates} developed in \cite{HITW}, \cite{HIT}.
\item[(ii)] The low frequency part of the $(W,Q)$ energy estimate
  (i.e. the $\dH^{\sigma}$ bound) in \eqref{energy} is proved in
  Section~\ref{s:low-en}, using Shatah's normal form method
  \cite{MR803256}.
\item[(iii)] The energy estimates for  $S(W,Q)$ in \eqref{energy} are proved in 
Section~\ref{s:lin}, where the linearized equations are considered. This 
is also done in two steps, first for the $\dH^1$ norm, using a quasilinear modified energy,
 and then for the $\dH^\sigma$ norm, using normal forms.
\item[(iv)] A preliminary set of pointwise bounds is obtained from the energy 
estimates via Klainerman-Sobolev type inequalities in Section~\ref{s:ks}. 
These have an additional $t^{c\epsilon^2}$ factor compared with \eqref{point},
so are not quite sufficient to close the argument. However, they have additional decay 
away from the region $|\alpha| \approx t$, so they suffice outside a region 
$t^{-\delta} \leq |\alpha|/t \leq t^\delta$. 

\item[(v)] The final pointwise bounds in \eqref{point} are obtained in the last section
using the method of {\em testing by wave packets} developed in \cite{TI}, \cite{IT}.
\end{itemize}

Before we get to the above steps, in the next two sections we describe
the classes of multilinear operators used in this paper, and we do the
normal form computation for all the non-resonant quadratic and cubic
interactions which are needed later on.

\subsection{Further remarks.}

As mentioned above, this article is a natural continuation of authors' earlier work in 
\cite{HIT}, \cite{TI}. Compared to earlier work of Wu~\cite{MR2507638}, 
Ionescu-Pusateri~\cite{2013arXiv1303.5357I}
and Alazard-Delort~\cite{MR3460636,MR3429478} there 
are three key improvements in \cite{HIT}, \cite{IT}.

\begin{itemize}
\item The use of holomorphic (conformal) coordinates, which greatly
  simplifies the form of the equations and eliminates the use of the
  Dirichlet to Neumann map. This idea is due to
  Ovsjannikov~\cite{MR0347218}, and was further developed in
  \cite{MR1471885}, \cite{DYACHENKO199673} and in the form used here in \cite{HIT}.

\item The \emph{modified energy method} in \cite{HIT}, which can be
  viewed as a \emph{quasilinear} adaptation of Shatah's normal form
  method or equivalently of the I-method. This addresses the well
  known issue that normal form transformations are poorly adapted to
  quasilinear evolutions. The idea is that instead of transforming the
  equations, one can construct a modified energy which is both adapted
  to the quasilinear problem and provides cubic estimates. Our energies
are constructed starting from the corresponding normal form computation.
Alternatively one can base the construction on an I-method computation,
which can be viewed as a subset of the former. The para-diagonalization
idea of Alazard and Delort~\cite{MR3460636,MR3429478}  (see also Delort's 
earlier work~\cite{Delort}) is also a very useful alternate tool that leads toward similar results.

\item The \emph{method of testing by wave packets} in \cite{TI,IT},
  which is a more efficient way to obtain asymptotic equations in
  problems which exhibit modified scattering asymptotics. One should
  compare this with the previous approaches. In the semilinear setting, the first one, 
introduced  by Hayashi-Naumkin~\cite{MR1613646} and refined by
  Kato-Pusateri~\cite{MR2850346}, was Fourier based. The
  Lindblad-Soffer~\cite{MR2199392} idea was to instead produce an
  asymptotic equation by looking at the solution along rays in the
  physical space. In the quasilinear  water wave setting such ideas
were first introduced in the work of  Ionescu-Pusateri~\cite{MR3121725,2013arXiv1303.5357I} and
Alazard-Delort~\cite{MR3460636,MR3429478}; broadly speaking, their approach can be viewed as 
a quasilinear development of the ideas in  \cite{ MR2850346}, respectively 
\cite{MR2199392}.

\end{itemize}

All of these ideas come equally handy in the present paper. The main steps are 
as follows:

\bigskip

{\bf (i) Normal form analysis.} The analysis here parallels the one
for gravity waves, in that bilinear resonant interactions occur only
when either one of the input frequencies or the output frequency is
zero. Thus, a-priori in the normal form there is a singularity at
frequency zero.  Using the full structure of the equations some
cancellations occur (a partial null condition), which is critical for
the present result.  A similar difficulty arises in the work of
Germain-Masmoudi-Shatah~\cite{MR3318019} in higher
dimension. One entirely expected difference, compared to our previous
work \cite{HIT}, \cite{IT}, is that here the normal form symbol is no
longer a polynomial, so that we need to deal with multiplier type
bilinear forms. This is relatively standard and arises in most
problems, beginning with Shatah's original work \cite{MR803256}.

\bigskip 

{\bf (ii) High frequency analysis and energy estimates.} The
quasilinear modified energy method applies as in \cite{HIT}, and
serves to prove high frequency energy estimates. One additional
difficulty here is that the top three terms in the normal form energy
expansion need to be corrected in a quasilinear fashion, unlike in our
gravity waves paper \cite{HIT} where only the top term needs to be
adjusted.

\bigskip 

{\bf (iii) Low frequency analysis and normal forms corrections.}  
The low frequency analysis has a semilinear nature, and is treated
in a manner which is based purely on normal forms.
 The idea is very simple, namely to use Shatah's normal form method to eliminate
quadratic interactions.    In order to gain enough
control on the errors, we also need to go one step further, and
eliminate as well certain cubic unbalanced non-resonant interactions.
Our energy spaces $\dH^\sigma$ at low frequencies are chosen in a manner 
which is similar to Germain-Masmoudi-Shatah~\cite{MR3318019}, 
though their motivation is somewhat different. 

\bigskip

{\bf (iv) The asymptotic equations.} These are used in order to extend
the pointwise decay bounds past the exponential time. The bounds we
obtain from Klainerman-Sobolev type estimates are strong enough both
for very small frequencies (and low speeds) and for very large
frequencies (and high speeds). Thus, we only need to consider the
intermediate range of frequencies not very far from $1$. There the
method of testing by wave packets applies exactly as in \cite{IT}.

\subsection{ Subsequent work} 
The same problem was considered in a later article by
Ionescu-Pusateri~\cite{IP3}.  Their main global result can be viewed
as an improvement of our result in terms of the low frequency
assumptions.  Roughly speaking, it corresponds to setting $\sigma =
1/2$ in \eqref{WH} and Theorem~\ref{t:almost}.  This in turn allows
for data $ W$ (or $\eta$ in the Eulerian setting) which may decay to
different heights at $\pm \infty$.  The price to pay is that unlike
here, an almost global bound for the linearized equation is no longer
established in \cite{IP3}; instead, only the bound for the scaling
derivative of the solution is proved. A version of
Theorem~\ref{t:cubic} is also proved in \cite{IP3}, but in this case
their result is strictly weaker in terms of the Sobolev norms which
are used.

The proof of the result in \cite{IP3} is based on the same modified
energy construction introduced in our earlier work \cite{HITW},
\cite{HIT} and also used here. However, this is applied in the
Eulerian setting and using also Alazard and Delort's
~\cite{MR3460636,MR3429478} para-diagonalization
idea.

\bigskip

{\bf Acknowledgements:}  The final part of this  research was carried out 
while both authors were visiting the Haussdorf Institute in Bonn. 


\section{ Paraproduct type multilinear operators}

A key role in the analysis in this paper is played by paraproduct type
translation invariant bilinear and multilinear forms. Since our state
space consists of holomorphic functions, i.e., with negative spectrum,
we will only consider operators acting on this space.  We begin with
bilinear forms, which are of two types:
\begin{itemize}
 \item holomorphic, i.e., of the form
\[
\widehat{B(u,v)}(\zeta) = \int_{\zeta = \xi+\eta} m(\xi,\eta) \hat u(\xi) \hat v(\eta) d\xi ,
\]
\item mixed, i.e., of the form
\[
\widehat{B(u,v)}(\eta) = 1_{\eta > 0} \int_{\eta = \zeta-\xi}  m(\xi,\zeta) \hat u(\zeta) \bar{\hat{v}}(\xi) d\xi .
\]
\end{itemize}
These represent paradifferential generalizations of the standard
products $uv$, respectively $P(u \bar v)$, restricted to the holomorphic class.
Our bilinear symbols $m$ will always be homogeneous, and smooth away from $(0,0)$.
 
\begin{definition}
  We denote by $\M_k$ the class of symbols $m$ as above which are
  homogeneous of order $k \in \Z$ and smooth away from $(0,0)$. The
  corresponding classes of operators are denoted by $OP\M_k^{(2,0)}$
  for operators of holomorphic type, by $OP\M_k^{(1,1)}$ for operators
  of mixed type, and by $OP\M_k$ for combinations thereof. By
  $L^{(2,0)}_k$, $L^{(1,1)}_k$ and $L_k$ we denote operators in the
  respective classes.
\end{definition}

Such bilinear and  multilinear forms satisfy multiplicative estimates which are 
quite similar to product type bounds, see Coifman-Meyer~\cite{MR518170}, 
Kenig-Stein~\cite{MR1682725},
Muscalu-Tao-Thiele~\cite{MR1887641}, Muscalu~\cite{MR2371442}:

\begin{proposition}\label{bi-est}
Let $L_0$ be an order zero bilinear form as above. Then we have 
\[
\|L(f,g)\|_{L^r} \lesssim \|f\|_{L^p} \|g\|_{L^q}, \qquad 
\frac{1}p+\frac1q = \frac1r, \quad 1 \leq r < \infty \ \ \ \  1 < p ,q \leq \infty .
\]
\end{proposition}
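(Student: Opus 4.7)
The plan is to reduce Proposition~\ref{bi-est} to the classical Coifman--Meyer bilinear multiplier theorem via a dyadic decomposition. First I would perform a Littlewood--Paley decomposition of both input factors, $f = \sum_j P_j f$ and $g = \sum_k P_k g$, and split the bilinear form according to the three regimes of the ratio $|\xi|/|\eta|$: low-high, high-high, and high-low. Since $m \in \M_0$ is homogeneous of degree zero and smooth away from the origin, its restriction to any dyadic annulus $\{|\xi| \sim 2^j,\ |\eta| \sim 2^k\}$ is, after rescaling by $2^{-\max(j,k)}$, a fixed smooth compactly supported function independent of the scale. This delivers the Marcinkiewicz--Mihlin type estimate
\[
|\partial_\xi^\alpha \partial_\eta^\beta m(\xi,\eta)| \lesssim (|\xi|+|\eta|)^{-|\alpha|-|\beta|},
\]
which is exactly the hypothesis of the bilinear multiplier theorems in Coifman--Meyer~\cite{MR518170}, Kenig--Stein~\cite{MR1682725}, and Muscalu--Tao--Thiele~\cite{MR1887641}.

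To execute the argument directly, I would expand the localized symbol on each dyadic annulus in a Fourier series, producing a rapidly convergent sum of tensor-product paraproducts whose multipliers are Schwartz-class in each variable separately; the decay of the Fourier coefficients is uniform in the dyadic scale thanks to the scale invariance. For the unbalanced pieces (low-high and high-low) the output is frequency-localized at the larger input frequency, and H\"older's inequality combined with the Littlewood--Paley square function characterization of $L^p$ immediately yields
\[
\|L_0^{\mathrm{unb}}(f,g)\|_{L^r} \lesssim \|M f\|_{L^p} \|S g\|_{L^q} \lesssim \|f\|_{L^p} \|g\|_{L^q},
\]
where $M$ is the Hardy--Littlewood maximal function and $S$ the Littlewood--Paley square function.

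The balanced high-high regime is the main obstacle, since the output frequency can be arbitrarily lower than either input frequency and the sum over output scales must be recombined; it is precisely here that the endpoint restrictions $p,q > 1$ enter, rather than only $r > 1$. I would handle it with the Fefferman--Stein vector-valued maximal inequality applied to the telescoping frequency projections $P_{\leq j}(P_j f \cdot P_j g)$, which closes the argument for all $1 < r < \infty$ with $1/r = 1/p + 1/q$ and $p,q > 1$, exactly matching the hypotheses. Finally, the holomorphic/mixed structure plays no role in the bound itself: the negative-frequency projector $P$ is bounded on $L^p$ for $1 < p < \infty$ and commutes with Littlewood--Paley projections, and in the mixed case one additionally takes a complex conjugate of the second input, which preserves $L^q$ norms. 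Alternatively, one can simply invoke Theorem 1 of Kenig--Stein~\cite{MR1682725}, as our symbol class verifies its hypotheses.
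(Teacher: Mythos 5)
Your proposal is correct and takes essentially the same route as the paper, which simply cites Coifman--Meyer and Kenig--Stein without proof; your dyadic sketch is precisely the standard argument behind those results, and your reduction of the mixed-type operator to a Szeg\H{o} projection composed with a smooth bilinear multiplier is the right way to handle the indicator $1_{\eta>0}$. One small quibble: the restrictions $p,q>1$ actually arise already in the unbalanced (low-high and high-low) regimes via the Hardy--Littlewood maximal function on the low-frequency factor, while the high-high regime is what forces $1<r<\infty$ through the duality/maximal-function step on the test function; attributing $p,q>1$ specifically to the balanced piece reverses this, though the stated hypotheses are of course sufficient for all three regimes.
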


We also need to work with trilinear forms in the sequel. There we have three
types, which are denoted using the superscripts $(3,0)$, $(2,1)$ respectively 
$(1,2)$.  However, in order to deal with all the cases arising  in this article, 
 we need to enlarge slightly the class of homogeneous 
symbols we allow.  Since we are only concerned with homogeneous symbols,
it suffices to describe the zero order symbols.
There are three such classes of symbols of order zero that we need to allow:
\begin{itemize}
\item smooth homogeneous symbols,
\item compositions of two smooth bilinear symbols,
\item polyhomogeneous symbols of the form
\[
|\xi_1|^{c_1} |\xi_2|^{c_2} |\xi_3|^{c_3} |\xi_{out}|^c_4 m(\xi_1,\xi_2,\xi_3,\xi_4), \qquad c_1,c_2,c_3,c_4 \geq 0, 
\]
with smooth homogeneous $m$.
\end{itemize}
Here $\xi_{out}:= \pm\xi_1 \pm \xi_2 \pm \xi_3$ represents the output
frequency. For such symbols we also have the following result:

\begin{proposition}\label{tri-est}
Let $L_0$ be an order zero trilinear form as above. Then we have 
\[
\|L(f,g,h)\|_{L^r} \lesssim \|f\|_{L^p} \|g\|_{L^q}\|h\|_{L^s}, \qquad 
\frac{1}p+\frac1q +\frac1s= \frac1r, \quad 1 \leq r < \infty, \ \ 1 < p,q,s \leq \infty. 
\]
\end{proposition}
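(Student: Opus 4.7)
The three classes of symbols call for three separate arguments, each reducing either to the classical Coifman--Meyer theory or directly to Proposition~\ref{bi-est}. For \emph{smooth homogeneous symbols}, the claimed bound is the classical trilinear Coifman--Meyer multiplier theorem. The plan is to Littlewood--Paley decompose each input, use Mikhlin-type kernel bounds on each dyadic tile, and assemble the pieces via square function estimates together with H\"older's inequality.

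For \emph{compositions} of the form $L(f,g,h) = L_1(L_2(f,g),h)$ with $L_1, L_2$ order zero bilinear operators of the type in Proposition~\ref{bi-est}, I would introduce an intermediate exponent $r_0$ with $\frac{1}{r_0} = \frac{1}{p} + \frac{1}{q}$, so that $\frac{1}{r} = \frac{1}{r_0} + \frac{1}{s}$. Two successive applications of Proposition~\ref{bi-est} then yield
\[
\|L(f,g,h)\|_{L^r} \lesssim \|L_2(f,g)\|_{L^{r_0}} \|h\|_{L^s} \lesssim \|f\|_{L^p}\|g\|_{L^q}\|h\|_{L^s}.
\]
The remaining composition patterns (with $L_2$ acting on $f,h$ or $g,h$) are handled by permuting the roles of the inputs.

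For \emph{polyhomogeneous symbols} of the form $|\xi_1|^{c_1}|\xi_2|^{c_2}|\xi_3|^{c_3}|\xi_{out}|^{c_4} m$ with $c_i \geq 0$ and $m$ smooth homogeneous of degree $-(c_1+c_2+c_3+c_4)$, I would Littlewood--Paley decompose each input and the output at dyadic scales $k_1, k_2, k_3, k_{out}$. On each tile the singular factors $|\xi_i|^{c_i}$ and $|\xi_{out}|^{c_4}$ can be replaced by smooth cutoffs at the price of extracting the dyadic weights $2^{k_1 c_1 + k_2 c_2 + k_3 c_3 + k_{out} c_4}$, after which the remaining symbol is smooth of order zero and the first case applies uniformly in $\vec k$. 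The dyadic summation converges because $c_i \geq 0$ while the degree $-(c_1+c_2+c_3+c_4)$ of $m$ exactly compensates the extracted weights, reflecting that the overall symbol is of order zero.

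The main obstacle will be the \emph{high-high-low regime} in the polyhomogeneous case, where two input frequencies nearly cancel and produce an output frequency much smaller than either input. In a naive count the negative degree of $m$ threatens a logarithmic loss, but the positivity $c_4 \geq 0$ combined with almost orthogonality between frequency-separated Littlewood--Paley pieces is expected to provide enough decay to close the geometric summation in the frequency gap. All remaining regimes (where the output frequency is comparable to the maximal input frequency) are standard paraproduct configurations and follow directly from the smooth homogeneous case.
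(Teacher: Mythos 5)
The paper does not actually prove Proposition~\ref{tri-est}: it states the result and refers the reader to Coifman--Meyer~\cite{MR518170}, Kenig--Stein~\cite{MR1682725}, Muscalu--Tao--Thiele~\cite{MR1887641}, and Muscalu~\cite{MR2371442}, exactly as it does for the bilinear Proposition~\ref{bi-est}. So there is no in-paper argument to compare against; what you have produced is a sketch that fills in what the citations are meant to supply.

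Your treatment of the first two symbol classes is sound. Smooth homogeneous zero-order symbols fall squarely under the trilinear Coifman--Meyer theorem, and your reduction of the composition case to two successive applications of Proposition~\ref{bi-est} with an intermediate exponent $r_0$ is exactly right and in fact the most economical route. One remark there: $r_0 = pq/(p+q)$ need not lie in $(1,\infty)$ when $p,q$ are both close to $1$, so you should check that the overall exponent constraints $1<r<\infty$ and $p,q,s>1$ keep $r_0$ in the admissible range, or else take the inner application of Proposition~\ref{bi-est} on a different pair.

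The polyhomogeneous case is where your sketch is thin. Two specific points. First, for forms of type $(3,0)$ all input frequencies lie in $(-\infty,0]$, so $|\xi_{out}|=|\xi_1|+|\xi_2|+|\xi_3|\geq \max_i|\xi_i|$ and there simply is no high--high--low regime; that regime occurs only for the mixed types $(2,1)$ and $(1,2)$, where complex conjugation flips the sign of some frequencies. Second, the phrase ``almost orthogonality between frequency-separated Littlewood--Paley pieces'' is doing no real work: in the high--high--low configuration the two high frequency pieces are not separated but comparable. What actually controls this regime is homogeneity of $m$. If the non-small frequencies are of size $2^k$ and the output is of size $2^{k_{out}}\ll 2^k$, then $(\xi_1,\xi_2,\xi_3,\xi_{out})$ has norm $\approx 2^k$, so all derivatives of $m$ (taken along the hyperplane $\xi_{out}=\pm\xi_1\pm\xi_2\pm\xi_3$, including the induced $\partial_{\xi_{out}}$ contributions) obey Mikhlin bounds at scale $2^k$, not the bad scale $2^{k_{out}}$. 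After extracting the dyadic weights, the residual symbol on each tile satisfies uniform derivative bounds at scale $2^k$, and when $c_4>0$ the extracted factor $2^{(k_{out}-k)c_4}$ gives geometric decay in the gap; when $c_4=0$ the residual is already a uniformly bounded Coifman--Meyer family whose sum over $k_{out}\leq k$ is handled by the usual square-function machinery rather than by any cancellation between pieces. You should spell out this homogeneity-of-$m$ step, because as written the sketch does not explain why the Mikhlin condition holds uniformly after the weight extraction.
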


\begin{proof}[Outline of proof]
  This proposition is special case of the results in
  \cite{MR518170,MR1682725,MR1887641,MR2371442} for the first two
  classes of symbols above. For the third class it is a small
  variation on the same theme.  By duality and separation of variables
  arguments the problem reduces to the bilinear case and paraproduct
  type symbols of the form
\[
a(\xi_1,\xi_2) = |\xi_1|^c m(\xi_1,\xi_2), \qquad c > 0 
\]
with $m$ smooth, homogeneous, and localized in $ |\xi_1| \ll |\xi_2|$.  Localizing this 
further to regions where $ |\xi_1| \approx 2^j |\xi_2|$, $j < 0$, this is now a classical paraproduct
with a $2^{-cj}$ bound, and the $j$ summation is straightforward.
\end{proof}  

Since our problem is a system and the two variables $(W,Q)$ correspond to different
homogeneities, we will also need notations for homogeneous bilinear and trilinear 
forms. Precisely, for $k \in \Z$ we define classes $(\B_k^2,\C_k^2)$, respectively 
$(\B_k^3,\C_k^3)$ of bilinear and trilinear  forms as follows:

\begin{itemize}
\item $\B_k^2(W,Q)$ contains bilinear forms of the type
$L_{k+1}(W,W) + L_k(Q,Q)$.

\item  $\C_k^2(W,Q)$ contains bilinear forms of the type
$L_{k+1}(W,Q)$.

\item $\B_k^3(W,Q)$ contains trilinear forms of the type
$L_{k+2}(W,W,W) + L_{k+1}(W,Q,Q)$.

\item  $\C_k^3(W,Q)$ contains trilinear forms of the type
$L_{k+2}(W,W,Q)+ L_{k+1}(Q,Q,Q) $.
\end{itemize}


\section{Normal form expansions and corrections}

In this section we collect most of our normal form computations. The goal here is
twofold; first to use quadratic normal form corrections in order to eliminate the quadratic 
terms in the equation,  and then to use cubic normal form corrections in order 
 to eliminate certain non-resonant parts of cubic  terms in the equation.

To understand these computations, one needs to start from the dispersion relation
associated to the linear system
\begin{equation} \label{ww-lin}
\left\{
\begin{aligned}
& W_t + Q_\alpha = 0 \\
& Q_t +i W_{\alpha\alpha} = 0,
\end{aligned} 
\right.
\end{equation}
on holomorphic functions. Recast as a scalar equation this becomes
\[
(\partial_t^2 +  i  \partial_\alpha^3) W = 0,
\]
which immediately leads to the dispersion relation
\[
\tau = \pm |\xi|^\frac32, \qquad \xi < 0.
\]
Thus resonances in bilinear interactions correspond to zeroes
of the expression $|\xi|^\frac32 \pm |\eta|^\frac32 \pm |\xi+\eta|^\frac32$. But this 
cannot vanish unless either $\xi$ or $\eta$ are zero. Consequently, 
all quadratic interactions are removable, but with an appropriate loss of derivatives
at (the resonant) frequency zero. As a guideline, if $\xi$, and $\eta$ 
have dyadic sizes $\lambda_0 \leq \lambda_1$, then in the worst case we have
\[
\left| |\xi+\eta|^\frac32 - |\xi|^\frac32 - |\eta|^\frac32 \right| \approx \lambda_0 \lambda_1^\frac12.
\] 
Thus, compared to the  quadratic component of the nonlinearity, the quadratic
normal form looses one derivative at low frequency and half a derivative
at high frequency. A similar analysis applies in the case of cubic energy errors.
For later use, we observe that we have the relation
\[
\prod_{\pm} |\xi|^\frac32 \pm |\eta|^\frac32 \pm |\xi+\eta|^\frac32 = \xi^2 \eta^2(9 \xi^2+ 14\xi \eta + 9 \eta^2).
\]
The last expression above is elliptic, and will appear in the
denominator of all symbols for bilinear normal form corrections later on.

Next we discuss  trilinear terms. There we have four frequencies to deal with, $\xi_1,\xi_2$
and $\xi_3$ for the inputs, and  $\xi_0= \pm \xi_1 \pm \xi_2 \pm \xi_3$ for the 
output.  In this case we can have 
\[
|\xi_1|^\frac32 \pm |\xi_2|^\frac32 \pm 
|\xi_1|^\frac32 \pm |\xi_2|^\frac32 = 0
\]
 if and only if there are two pairs of equal frequencies
and matching signs in the last relation.  Fortunately in our analysis
we only need to consider the case where the four dyadic frequencies are unbalanced,
\[
\lambda_0 \ll \lambda_1 \leq \lambda_2 \approx \lambda_3.
\]
Then the interactions are non-resonant, and we have as in the bilinear case
\begin{equation}\label{nonres-tri}
\left| |\xi_1|^\frac32 \pm |\xi_2|^\frac32 \pm 
|\xi_1|^\frac32 \pm |\xi_2|^\frac32 \right| \gtrsim \lambda_1 \lambda_2^\frac12.
\end{equation}

\subsection{ The main normal form transformation}

We begin with the quadratic and cubic expansion in the equation
\eqref{ngst}, and then we compute the normal form transformation which
eliminates the quadratic terms from the equation. 
Starting with 
\[
F \approx Q_\alpha - Q_\alpha W_\alpha + P[\bar Q_\alpha W_\alpha - Q_\alpha \bar W_\alpha] 
+ P[ (Q_\alpha -\bar Q_\alpha) (4|\Re W_\alpha|^2- |W_\alpha|^2)] 
\]
we compute  the multilinear expansion:
\begin{equation} \label{ww-multi}
\left\{
\begin{aligned}
& W_t + Q_\alpha = G^{(2)} + G^{(3)} + G^{(4+)}, \\
& Q_t +i W_{\alpha\alpha} = K^{(2)}  + K^{(3)}+K^{(4+)},
\end{aligned} 
\right.
\end{equation}
where the quadratic terms $(G^{(2)},K^{(2)}) \in
(\C^2_{-1}(W_\alpha,Q_\alpha),\B_0^2(W_\alpha,Q_\alpha))$ are given by
\begin{equation}\label{ww-quad}
\begin{cases}
G^{(2)} := \ P\left[ Q_\alpha \bar W_\alpha - \bar Q_\alpha W_\alpha \right]&
\\
K^{(2)} :=   -  Q_\alpha^2 -  P\left[ |Q_\alpha|^2\right] 
+\frac{i}2  P\left[ W_{\alpha \alpha}(3W_\alpha + \bar W_\alpha)  
- \bar W_{\alpha \alpha}{W}_{\alpha}\right] ,&
\end{cases}
\end{equation}
and  the cubic terms $(G^{(3)},K^{(3)})$ are given by 
\begin{equation}\label{ww-cubic}
\begin{cases}
G^{(3)} = \  W_\alpha ( Q_\alpha W_\alpha 
- P[\bar Q_\alpha W_\alpha - Q_\alpha \bar W_\alpha] ) 
- P[ (Q_\alpha -\bar Q_\alpha) (4|\Re W_\alpha|^2- |W_\alpha|^2)] &
\\
K^{(3)} =   \ Q_\alpha ( Q_\alpha W_\alpha 
- P[\bar Q_\alpha W_\alpha - Q_\alpha \bar W_\alpha] ) + 2 P[\Re \W_\alpha|Q_\alpha|^2]&
\\ 
 \qquad \quad  \ - i P\left[W_{\alpha\alpha} ( \frac{15}8 W_\alpha^2 + \frac{3}4 |W_\alpha|^2 + \frac38 \bar W_\alpha^2)  - \bar W_{\alpha\alpha} (  \frac{3}4 |W_\alpha|^2 + \frac38  W_\alpha^2)\right].&
\end{cases}
\end{equation}

 The role of the normal form transformation is to eliminate the quadratic terms $(G^{(2)},K^{(2)})$
from the equation \eqref{ww-multi}. It is obtained  as follows:

\begin{proposition}\label{p:nf}
The normal form transformation for the equation \eqref{ww-multi} is given by 
\begin{equation}\label{nft}
\left\{
\begin{aligned}
\tW = & \ W + W_{[2]}
\\
\tQ = & \ Q  + Q_{[2]},
\end{aligned}
\right.
\end{equation}
where the bilinear forms $(W_{[2]},Q_{[2]}) \in (\B_0(W,Q), \C_0(W,Q))$
have the form
\begin{equation}\label{nft-exp}
\left\{
\begin{aligned}
W_{[2]} =  & \ B^h(W,W) +  C^h(Q,Q)+ B^a(W,\bar W) +  C^a(Q,\bar Q)
\\
Q_{[2]} = & \ A^h(W,Q)+ A^a(W,\bar Q)+ D^a(Q,\bar W),
\end{aligned}
\right.
\end{equation}
with $A^h$, $A^a$, $B^h$, $B^a$, $D^a \in OP\M^1$, 
while $C^h$, $C^a \in OP\M^0$,  whose symbols are as follows:
\[
B^h = - i (\xi+\eta) \frac{\frac94(\xi+\eta)^2 -2 \xi \eta}{9(\xi+\eta)^2 - 4 \xi \eta}, \qquad
C^h = -  i \frac{\frac32(\xi+\eta)^2}{9(\xi+\eta)^2 - 4 \xi \eta},
\]
\[
A^h = i \frac{3\xi^3 - \frac32 \xi^2 \eta - 5 \xi \eta^2 -\frac92 \eta^3}{9(\xi+\eta)^2 - 4 \xi \eta},
\]
respectively
\[
\begin{cases}
A^a (\zeta, \eta)= \dfrac{ 3i (\xi+\eta) ( \xi^2 + \frac32 \xi \eta +\frac{3}{2}\eta^2)}{9 (\xi+\eta)^2 - 4 \xi \eta }, \quad  &B^a (\zeta, \eta)=  \dfrac{-i (\xi+\eta )( \frac92 \xi^2 + \frac{19}{2} \xi \eta +6\eta^2)}{9 (\xi+\eta)^2 - 4 \xi \eta },
\\ 
&\\
C^a (\zeta, \eta)= \dfrac{- 3 i (\xi+\eta)^2}{9 (\xi+\eta)^2 - 4 \xi \eta },\quad &D^a(\zeta , \eta) =\dfrac{-i (\xi+\eta) ( \frac92 (\xi+\eta)^2 - 4 \xi \eta )}{9 (\xi+\eta)^2 - 4 \xi \eta}.
\end{cases}
\]

\end{proposition}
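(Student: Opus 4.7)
The plan is to impose the defining property of the normal form, namely that the transformed variables $(\tilde W, \tilde Q)$ satisfy the linearized equations \eqref{ww-lin} modulo cubic terms, and then convert this requirement into an algebraic system for the bilinear symbols. Concretely, I compute
\[
\partial_t \tilde W + \tilde Q_\alpha = G^{(2)} + \partial_t W_{[2]} + \partial_\alpha Q_{[2]} + O(\text{cubic}),
\]
\[
\partial_t \tilde Q + i \tilde W_{\alpha\alpha} = K^{(2)} + \partial_t Q_{[2]} + i \partial_\alpha^2 W_{[2]} + O(\text{cubic}),
\]
where $\partial_t W_{[2]}$ and $\partial_t Q_{[2]}$ are evaluated at the quadratic level by substituting the linear equations $W_t = -Q_\alpha$, $Q_t = -iW_{\alpha\alpha}$ and their antiholomorphic conjugates. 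Cancelling the quadratic remainder yields
\[
\partial_t W_{[2]} + \partial_\alpha Q_{[2]} = -G^{(2)}, \qquad \partial_t Q_{[2]} + i \partial_\alpha^2 W_{[2]} = -K^{(2)},
\]
at the quadratic level.

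These equations decouple according to the type of bilinear input: a holomorphic-holomorphic subsystem for $(A^h, B^h, C^h)$ and a mixed holomorphic-antiholomorphic subsystem for $(A^a, B^a, C^a, D^a)$. For the purely holomorphic case, one reads off from \eqref{ww-quad} that $G^{(2),h} = 0$ and $K^{(2),h} = -Q_\alpha^2 + \frac{3i}{2}W_{\alpha\alpha} W_\alpha$. Matching symbols on the $(W,Q)$-type input of the first equation linearly expresses $A^h$ in terms of $B^h, C^h$; substituting into the symmetrized $(W,W)$ and $(Q,Q)$ equations of the second produces a $2\times 2$ linear system for $B^h, C^h$ whose determinant simplifies to exactly the elliptic factor $9(\xi+\eta)^2 - 4\xi\eta$ that appears in the resonance-product identity recalled earlier. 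Inverting this determinant gives $B^h$ and $C^h$ in the form stated in the proposition, and back-substitution produces $A^h$; the apparent singularity at $\xi+\eta=0$ in the expression for $A^h$ cancels against a zero in the numerator, as guaranteed by the $(W,Q)$-equation itself.

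For the mixed case I proceed analogously, writing $B^a(W,\bar W), C^a(Q,\bar Q), A^a(W, \bar Q), D^a(Q,\bar W)$, and carefully tracking the projectors $P$ in $G^{(2)}, K^{(2)}$ and the mixed Fourier convention (one antiholomorphic input, output projected to negative frequencies). Reading off the coefficients of the $(W,\bar W), (Q,\bar Q), (W,\bar Q), (Q,\bar W)$ input types furnishes a linear system for the four symbols which splits into blocks sharing the same elliptic denominator $9(\xi+\eta)^2 - 4\xi\eta$, so solvability again reduces to an algebraic inversion that is valid off the resonant frequency zero. The main obstacle throughout is the bookkeeping: correctly identifying which slot of each bilinear form is differentiated when substituting the linearized equation, symmetrizing $B^h, B^a, C^h, C^a$ over identical inputs without double counting, and aligning the $P$-projected quadratic sources against the antiholomorphic structure. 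Once the systems are set up correctly, the verification that the polynomial symbols listed in the proposition solve them is a direct computation.
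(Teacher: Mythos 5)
Your proposal follows exactly the paper's strategy: impose that $(\tilde W,\tilde Q)$ solve the linear system modulo cubics, split the quadratic sources and the normal-form ansatz into the holomorphic block $(A^h,B^h,C^h)$ and the mixed block $(A^a,B^a,C^a,D^a)$, match symbols by input type, and invert the resulting linear systems whose common determinant is the elliptic factor $9(\xi+\eta)^2-4\xi\eta$. This is the same route the paper takes, with the same reduction (eliminating $A^h$ via the $\tilde W$-equation and solving a $2\times 2$ system for $B^h,C^h$, then back-substituting), so there is nothing to add beyond carrying out the routine algebra.
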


We postpone the proof of the Proposition~\ref{p:nf} for the end of the section, and instead 
we discuss some consequences.  

The main direct consequence of the normal form transformation is the fact that
the normal form variables $(\tW,\tQ)$ solve a cubic equation,
\begin{equation}\label{nft-ww}
\left\{
\begin{aligned}
 \tW_t + \tQ_\alpha =  \ \tG(W,Q)
\\
 \tQ_t +i \tW_{\alpha\alpha} = \ \tK(W,Q),
\end{aligned} 
\right.
\end{equation} 
where the expressions $(\tG,\tK)$ contain only cubic and higher order terms.

These equations cannot be used directly at high frequencies, as they neglect the 
quasilinear character of the problem. However, they are very useful at low frequencies,
where one can roughly view the problem as a semilinear equation.
For later use we decompose them into cubic and higher order terms,
\[
\tG = \tG^{(3)} + \tG^{(4+)}, \qquad \tK = \tK^{(3)} + \tK^{(4+)},
\]
where
\begin{equation}\label{gkt}
\begin{cases}
\tilde{G}^{(3)}=2B^h(G^{(2)}, W)+ 2C^h(K^{(2)}, Q)+B^{a}(G^{(2)},\bar{W})+B^a(W,\bar{G}^{(2)})&\\
                                                \hspace{3.8cm} +\left( C^a(K^{(2)}, \bar{Q})+C^a(Q,\bar{K}^{(2)})\right) +G^{(3)} (W, Q)&\\
 \tilde{K}^{(3)}=A^h(W, K^{(2)}) + A^h(G^{(2)}, Q)+A^a(G^{(2)},\bar{ Q})+A^{a}(W, \bar{K}^{(2)})+D^a(K^{(2)}, \bar{W})\\
                                                \hspace{3.6cm} +D^a(Q, \bar{G}^{(2)}) +K^{(3)}(W,Q),&\\
                           \end{cases}
\end{equation}
and
\begin{equation*}
\begin{cases}
\tilde{G}^{(4+)}=2B^h(G^{(3+)}, W)+ 2\sigma^{-1}C^h(K^{(3+)}, Q)+B^{a}(G^{(3+)},\bar{W})+B^a(W,\bar{G}^{(3+)})&\\
                                                \hspace{4.25cm} +\sigma^{-1}\left( C^a(K^{(3+)}, \bar{Q})+C^a(Q,\bar{K}^{(3+)})\right) +G^{(4+)} (W, Q)&\\
 \tilde{K}^{(4+)}=A^h(W, K^{(3+)}) + A^h(G^{(3+)}, Q)+A^a(G^{(3+)},\bar{ Q})+A^{a}(W, \bar{K}^{(3+)})+D^a(\bar{K}^{(3+)}, \bar{W})\\
                                                \hspace{4.1cm} +D^a(Q, \bar{G}^{(3+)}) +K^{(4+)}(W,Q).&\\
                           
\end{cases}
\end{equation*}

While the above expressions are somewhat more complicated than pure
multiplication operators,  for some of our purposes  it will  suffice to
extract the leading order term in the normal form expression and treat the rest 
in a simpler manner. For the low frequency portion of the energy estimates we 
will be particularly interested in the contributions which contain undifferentiated
$W$, and, to a lesser extent,  those containing undifferentiated $Q$'s.
To compute those, we first consider the expansion for $(\tW,\tQ)$, which is:
\begin{equation}\label{nft-lead}
\left\{
  \begin{aligned}
\tW = & \ W - \frac12 \Re W W_\alpha - i  (\frac{1}6 Q^2 + \frac13 \bar Q Q)+ 
L_{-1}(W_\alpha,  W_\alpha) + L_{-2}(Q_\alpha,Q_\alpha) 
\\
\tQ = & \ Q - \frac12 \Re W Q_\alpha - \frac13 \Re Q W_\alpha + 
L_{-1}(W_\alpha, Q_\alpha) .
\end{aligned}
\right.
\end{equation}
Now we observe that only the time differentiation of the leading terms
can yield expressions with undifferentiated $W$ or $Q$. One can see
this with a computation as follows:
\[
\frac{d}{dt} L_{-1}(W_\alpha,Q_\alpha) = L_{0}(W_t,Q_\alpha) + L_0(W_\alpha,Q_t).
\]
Thus using \eqref{nft-lead} we can compute
\begin{equation} \label{gkt-nodiff}
\left\{
  \begin{aligned}
 \tilde{G}^{(3)}=&  \ - \frac12 \Re W G^{(2),\alpha}+ L_0(W_\alpha,W_\alpha,Q_\alpha)
+ L_0(Q,Q_\alpha,Q_\alpha)
\\ 
 \tilde{K}^{(3)}= & \  - \frac12 \Re W K^{(2),\alpha} -\frac13 \Re Q  G^{(2),\alpha} +
L_0(Q_\alpha,Q_\alpha,W_\alpha) + L_0(W_\alpha,W_\alpha,W_{\alpha\alpha}).
\end{aligned}
\right.
\end{equation}

 Finally, for the global pointwise estimates in the last section, we
  primarily need to understand the resonant interactions of three
  equal frequency waves. These are not always captured in the
expansion in  \eqref{nft-lead}.

\begin{proof}[Proof of Proposition~\ref{p:nf}]

  We distinguish two types of quadratic terms on the right in
  \eqref{ww-multi}, the holomorphic ones and the mixed ones. The normal
  forms for the two are completely separate, so we do two computations:
  
\bigskip

{\bf (i) Holomorphic terms:} Here we work with the system
\begin{equation*}
\left\{
\begin{aligned}
& W_t + Q_\alpha =cubic, \\
& Q_t +i W_{\alpha\alpha} =  -  Q_\alpha^2 
+\frac{3 i}2   W_{\alpha \alpha} W_\alpha
  + cubic.
\end{aligned} 
\right.
\end{equation*}
By checking parity, our normal form 
must be
\[
\tW = W + B^h(W,W) + C^h(Q,Q), \qquad \tQ = Q + A^h(W,Q) ,
\]
where $B^h$ and $C^h$ are symmetric bilinear forms with symbols $B^h(\xi,\eta)$,
$C^h(\xi,\eta)$ and $A^h$ is arbitrary.

We compute 
\begin{equation*}
\left\{
\begin{aligned}
& \tW_t + \tQ_\alpha =  - 2B^h(W,Q_\alpha) - 2i C^h(W_{\alpha\alpha},Q)    + \partial_\alpha
A^h(W,Q) + cubic
 \\
& \tQ_t +i\tW_{\alpha\alpha} =  -A^h(Q_\alpha,Q) - i\sigma A^h(W,W_{\alpha\alpha})
+ i  \partial_\alpha^2( B^h(W,W) +  C^h(Q,Q))
-  Q_\alpha^2 \\
&\hspace{2.4cm}+\frac{3 i}2  W_{\alpha \alpha} W_\alpha
  + cubic.
\end{aligned} 
\right.
\end{equation*}
We denote the two input frequencies by $\xi$ and $\eta$. Then for the symbols we get 
\[
\begin{cases}
 2\eta B^h - 2 \xi^2 C^h - (\xi+\eta) A^h =  \  0&
\\
 (\xi A^h)_{sym} + (\xi+\eta)^2 C^h =  -\ i  \xi \eta  &
\\
(\eta^2 A^h)_{sym} -( \xi+\eta)^2 B^h =   \ \frac{3i}4\xi \eta(\xi+\eta).&
\end{cases}
\]
Here $sym$ stands for the symmetrization. From the first equation we get
\[
A^h = \frac{1}{\xi+\eta}( 2 \eta B^h - 2 \xi^2 C^h).
\]
Hence
\[
(\xi A^h)_{sym} = \frac{1}{\xi+\eta}( 2 \xi \eta B^h - (\xi^3+\eta^3) C^h),
\quad 
(\eta^2 A^h)_{sym} = \frac{1}{\xi+\eta}( (\xi^3+ \eta^3) B^h - 2 \xi^2 \eta^2 C^h).
\]
Replacing back into the original equations we get
\[
\begin{cases}
2 \xi \eta B^h  + 3 \xi \eta(\xi+\eta) C^h = -i \xi \eta(\xi+\eta)&\\
3 \xi \eta(\xi+\eta) B^h + 2\xi^2 \eta^2 C^h =  -\frac{3i}4\xi \eta(\xi+\eta)^2,&
\end{cases}
\]
and after simplifications
\[
\begin{cases}
 2  B^h + 3 (\xi+\eta) C^h = - i (\xi+\eta)&
\\
3 (\xi+\eta) B^h + 2\xi \eta C^h = - \frac{3i}4 (\xi+\eta)^2.&
\end{cases}
\]
Solving this yields
\[
A^h = i \frac{3\xi^3 - \frac32 \xi^2 \eta - 5 \xi \eta^2 -\frac92 \eta^3}{9(\xi+\eta)^2 - 4 \xi \eta},
\]
\[
B^h = - i (\xi+\eta) \frac{\frac94(\xi+\eta)^2 -2 \xi \eta}{9(\xi+\eta)^2 - 4 \xi \eta}, \qquad
C^h = -  i \frac{\frac32(\xi+\eta)^2}{9(\xi+\eta)^2 - 4 \xi \eta}.
\]
Here  the denominator is always nonzero.

\bigskip 

{\bf (ii) Mixed terms:} Here we have
\begin{equation*}
\left\{
\begin{aligned}
& W_t + Q_\alpha = P\left[ Q_\alpha \bar W_\alpha - \bar Q_\alpha W_\alpha\right]  + cubic \\
& Q_t +i W_{\alpha\alpha} =   -  P\left[ |Q_\alpha|^2\right] 
+\frac{i}2  P\left[ W_{\alpha \alpha} \bar W_\alpha 
- \bar W_{\alpha \alpha}{W}_{\alpha}\right]   + cubic.
\end{aligned} 
\right.
\end{equation*}
By checking parity, our normal form must be
\[
\tW = W + B^a(W,\bar W) + C^a(Q,\bar Q), \quad \tQ = Q + A^a(W,\bar Q)
+ D^a(Q,\bar W) ,
\]
where $B^a$ and $C^a$ are even bilinear forms
and $A^a$ is arbitrary.

We compute 
\begin{equation*}
\left\{
\begin{aligned}
\tW_t + \tQ_\alpha =  & \  - B^a(W,\bar Q_\alpha) - B^a(Q_\alpha,\bar W) 
- i C^a(W_{\alpha\alpha},\bar Q) +  i C^a(Q,\bar W_{\alpha\alpha})
\\ & \    
 + \partial_\alpha A^a(W,\bar Q)  + \partial_\alpha D^a(Q,\bar W) + 
P\left[ Q_\alpha \bar W_\alpha - \bar Q_\alpha W_\alpha \right] + 
cubic
 \\
\tQ_t +i \tW_{\alpha\alpha} =  & \  - A^a(Q_\alpha,\bar Q) 
+ i A^a(W,\bar W_{\alpha\alpha}) - D^a( Q, \bar Q_\alpha) - i  D^a(W_{\alpha \alpha} , \bar W)
-  P\left[ |Q_\alpha|^2\right] 
\\ & \ + i  \partial_\alpha^2( B^a(W,\bar W) +  C^a(Q,\bar Q))
+\frac{i}2  P\left[ W_{\alpha \alpha} \bar W_\alpha 
- \bar W_{\alpha \alpha}{W}_{\alpha}\right] 
  + cubic.
\end{aligned} 
\right.
\end{equation*}
We denote the two input frequencies by $\zeta$ and $\eta$, where
$\eta$ is always associated to the function that is
conjugated. Matching the like terms we get the symbol relations
\[
\begin{cases}
i \eta B^a + i \zeta^2 C^a + i(\zeta -\eta) A^a =  \zeta \eta &
\\
- i \zeta B^a - i \eta^2 C^a + i(\zeta-\eta) D^a = - \zeta \eta &
\\
- i \zeta A^a + i \eta D^a - i (\zeta-\eta)^2 C^a = \zeta \eta &
\\
- i \eta^2 A^a + i \zeta^2 D^a - i (\zeta-\eta)^2 B^a =   \frac12 \zeta \eta(\zeta+\eta). &
\end{cases}
\]
After algebraic manipulations we pull out a factor of $\eta^2(\zeta-\eta)^2$
to obtain the simpler system
\[
\left[\begin{array}{cccc}
1 & -1 & \zeta+\eta & 1 \cr 
0 & \zeta & \eta^2 & \eta -\zeta \cr
0  & -2 & \eta  & -1   \cr 
0 & 0 & 3\zeta & 2 
\end{array} \right]
\left[ \begin{array}{c} A \cr B \cr C \cr D \end{array}\right] 
= - i \zeta  \left[ \begin{array}{c} 0 \cr \eta \cr -\frac32 \cr 2 \end{array}\right] .
\]
This has solutions
\[
\begin{cases}
A^a =   \dfrac{ 3i \zeta( \zeta^2 - \frac12 \zeta \eta +\eta^2)}{9 \zeta^2 - 4 \zeta \eta + 4 \eta^2}, \quad  &B^a =  \dfrac{-i \zeta( \frac92 \zeta^2 + \frac12 \zeta \eta +\eta^2)}{9 \zeta^2 - 4 \zeta \eta + 4 \eta^2},
\\ 
&\\
C^a =   \dfrac{- 3 i \zeta^2}{9 \zeta^2 - 4 \zeta \eta + 4 \eta^2},\quad &D^a =  \dfrac{-i \zeta( \frac92 \zeta^2 - 4 \zeta \eta +4 \eta^2)}{9 \zeta^2 - 4 \zeta \eta + 4 \eta^2}.
\end{cases}
\]
Substituting $\zeta = \xi+\eta$ we obtain the relations from the proposition.

\end{proof}

\subsection{Cubic normal form  corrections}

Here we construct a cubic normal form correction for a trilinear non-resonant form.
We will apply our result both to the water wave equation and to its linearization,
so it is convenient to work with uncoupled variables
$(W_\alpha^1,  Q_\alpha^1)$,   $(W_\alpha^2, Q_\alpha^2)$, $(W_\alpha^3, Q_\alpha^3)$.

\begin{proposition}\label{p:nf-cubic}
  Suppose that $(W_\alpha^1, Q_\alpha^1)$, $(W_\alpha^2, Q_\alpha^2)$,
  $(W_\alpha^3,  Q_\alpha^3)$ solve the linear system \eqref{ww-lin}
  and have frequencies restricted to the range 
\[
|\xi_1 + \xi_2 - \xi_3| \ll \min \{ |\xi_1|,|\xi_2|\}.
\]
Let $(G,K)$ be trilinear forms  of type $(2,1)$ or $(1,2)$,
\[
(G,K) \in (  \C_{-2}((W_\alpha^1 Q_\alpha^1),   (W_\alpha^2, Q_\alpha^2), 
(W_\alpha^3 Q_\alpha^3)) ,  \B_{-1}((W_\alpha^1 Q_\alpha^1),   (W_\alpha^2, Q_\alpha^2), 
(W_\alpha^3 Q_\alpha^3))).
\]
Then there exists trilinear forms
\[
(W_{[3]},Q_{[3]}) \in  (\B_{0}( (W^1, Q^1),   (W^2, Q^2), 
(W^3, Q^3)),
\C_{0}((W^1, Q^1),   (W^2, Q^2), 
(W^3, Q^3)))
\]
so that we have 
\begin{equation}\label{cubic-cor}
\left\{
\begin{aligned}
& W_{[3],t} + Q_{[3],\alpha} = G \\
& Q_{[3],t} +i W_{[3],\alpha\alpha} = K.
\end{aligned} 
\right.
\end{equation}
\end{proposition}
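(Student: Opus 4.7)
The plan is to reduce \eqref{cubic-cor} to a linear algebraic problem on the symbol side, and then invert using the non-resonance estimate \eqref{nonres-tri}.

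Concretely, I would first write $W_{[3]}$ and $Q_{[3]}$ as trilinear forms with unknown symbols $w(\vec\xi)$ and $q(\vec\xi)$, broken up according to the pattern of $W^j$ versus $Q^j$ slots and conjugations. Since each pair $(W^j, Q^j)$ satisfies \eqref{ww-lin}, any time derivative landing on the $j$-th input slot converts into a spatial derivative that swaps $W^j$ and $Q^j$: $\partial_t W^j = -Q^j_\alpha$ and $\partial_t Q^j = -iW^j_{\alpha\alpha}$. Consequently the left-hand side of \eqref{cubic-cor} is itself a trilinear form in the $(W^j, Q^j)$, and its symbol depends linearly on $(w,q)$ through a $2\times 2$ algebraic matrix $M(\vec\xi)$ with entries polynomial in $\xi_0, \xi_1, \xi_2, \xi_3$, where $\xi_0$ denotes the output frequency. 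Matching against the symbols $(g,k)$ of $(G,K)$ then gives $M(\vec\xi)\,(w,q)^T = (g,k)^T$ for each slot pattern, and Cramer's rule reads off $w$ and $q$ in terms of $g, k$ and $\det M$.

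The crux is that $\det M$ coincides, up to constants, with the trilinear resonance denominator
\[
D(\vec\xi) \;=\; \prod_\pm \Bigl(|\xi_0|^{3/2} \pm |\xi_1|^{3/2} \pm |\xi_2|^{3/2} \pm |\xi_3|^{3/2}\Bigr),
\]
with the signs dictated by the slot configuration. This is the direct analogue of the bilinear factor $9(\xi+\eta)^2 - 4\xi\eta$ appearing in Proposition~\ref{p:nf}. Under the hypothesis $|\xi_0| \ll \min\{|\xi_1|, |\xi_2|\}$, the non-resonance bound \eqref{nonres-tri} yields $|D(\vec\xi)|$ bounded below by the appropriate power of the dyadic frequencies, and the orders of $G \in \C^3_{-2}$ and $K \in \B^3_{-1}$ are chosen precisely so that dividing by $D$ produces symbols of order zero, matching the classes $\B^3_0$ and $\C^3_0$ required by the statement.

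The main obstacle I foresee is verifying the class membership of the output symbols. Since $D$ is built from $|\xi_j|^{3/2}$ factors rather than purely polynomial expressions, the quotient is not smooth homogeneous; however, it is designed to fit into the polyhomogeneous class $|\xi_1|^{c_1}|\xi_2|^{c_2}|\xi_3|^{c_3}|\xi_0|^{c_4} m(\vec\xi)$ introduced in the previous section, which was enlarged exactly to accommodate such denominators. The remaining bookkeeping --- identifying, for each slot pattern, the explicit factorization of the quotient into $|\xi_j|^{c_j}$ factors and a smooth homogeneous piece $m$ --- should parallel the bilinear computation in the proof of Proposition~\ref{p:nf}, but with one extra frequency variable.
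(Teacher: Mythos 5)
Your overall strategy (write the corrections with unknown symbols, substitute into \eqref{cubic-cor}, and invert an algebraic system using the non-resonance bound \eqref{nonres-tri}) is the right starting point, but two essential pieces are missing. First, the system you would obtain by direct substitution is not $2\times 2$. Parity forces $W_{[3]}$ and $Q_{[3]}$ each to have four slot patterns ($(W,W,W)$, $(W,Q,Q)$, $(Q,W,Q)$, $(Q,Q,W)$ for $W_{[3]}$ and the complementary four for $Q_{[3]}$), and since $\partial_t$ on a $W^j$-slot produces a $Q^j$-slot and vice versa, all eight unknowns couple into a single $8\times 8$ algebraic system -- the paper explicitly calls this ``fairly large'' and declines to attack it directly. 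Instead it diagonalizes: with $Z^j_\pm = Q^j \pm i|D|^{1/2}W^j$ and $Z_{[3],\pm} = Q_{[3]} \pm i|D|^{1/2}W_{[3]}$, the system \eqref{cubic-cor} decouples into scalar equations $\partial_t Z_{[3],\pm} = \pm i|D|^{3/2}Z_{[3],\pm} + K$, which are inverted by dividing by a single resonance factor. Your assertion that $\det M$ coincides with the product of resonance factors is plausible, but it is unproved in the proposal; it is exactly what the diagonalization yields for free.

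Second, you flag but do not resolve the symbol class verification, and the asserted parallel with Proposition~\ref{p:nf} is misleading on precisely this point. The bilinear resonance product rationalizes to a genuine polynomial, $\prod_\pm\bigl(|\xi|^{3/2}\pm|\eta|^{3/2}\pm|\xi+\eta|^{3/2}\bigr) = \xi^2\eta^2(9\xi^2+14\xi\eta+9\eta^2)$, so the bilinear normal form symbols are honest rational functions from the outset. No such simplification happens here. After converting back from $Z_\pm$ to $(W,Q)$, each symbol of $(W_{[3]},Q_{[3]})$ is a signed sum over $\pm$ of terms like
\[
\frac{\pm |\xi_0|^{-1/2}}{\pm|\xi_0|^{3/2}\mp|\xi_1|^{3/2}\mp|\xi_2|^{3/2}\pm|\xi_3|^{3/2}},
\]
and the individual summands carry negative half-integer powers of $|\xi_0|$ (for $W_{[3]}$) or of the medium input frequency (for $Q_{[3]}$) that do \emph{not} fit the polyhomogeneous class, which requires nonnegative exponents $c_j$. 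The proof hinges on noticing that the antisymmetrization over the relevant sign cancels these bad powers: for $W_{[3]}$ the antisymmetrization in $\xi_0$ upgrades $|\xi_0|^{-1/2}$ to $|\xi_0|^{+1}$, and for $Q_{[3]}$ the antisymmetrization in $\xi_{med}$ upgrades $|\xi_{med}|^{-1/2}$ to $|\xi_{med}|^{+1}$. Without identifying these cancellations, your proposal cannot conclude that $(W_{[3]},Q_{[3]})\in(\B_0,\C_0)$, which is the whole content of the proposition.
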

We note that there are more parity combinations possible in a result of this type.
To shorten the computations, we confine ourselves to the situation above, which is the 
only one occurring in the present paper.

\begin{proof}
To limit the number of cases in our discussion we confine ourselves to $(2,1)$ forms,
and also set $G=0$. The remaining cases are all quite similar.
Then there are four terms in $K$,
\[
K = K_1(W_\alpha^1,W_{\alpha}^2, \bar Q_\alpha^{3})
+  K_2(W_\alpha^1,Q_{\alpha}^2, W_\alpha^{3})
+  K_3(Q_\alpha^1,W_{\alpha}^2, W_\alpha^{3})
+ K_4(W_\alpha^1,W_\alpha^2,W_\alpha^3).
\]
Substituting directly into the equations, we would obtain an $8 \times 8$ system, which is 
fairly large. Hence it is convenient to diagonalize it first.
For that we make the substitution 
\[
Z^j_{\pm} := Q^j \pm i  |D|^\frac12 W^j ,
\]
where $Z^j_{\pm}$ solve the scalar equations
\[
\partial_t Z^j_{\pm} = \pm i  |D|^\frac32  Z^j_{\pm}.
\]
Further, we set 
\[
Z_{[3],\pm} := Q_{[3]}  \pm i |D|^\frac12 W_{[3]}. 
\]
Then the system \eqref{cubic-cor} reads
\[
\partial_t Z_{[3],\pm} = \pm i  |D|^\frac32  Z^j_{\pm} + K.
\]
If we were to set 
\[
K := K_{\pm,\pm,\pm} = Z^1_{\pm} Z^2_{\pm} \bar Z^3_{\pm},
\]
then for $Z_{[3],\pm}$ we obtain the expression 
\[
Z_{[3],\pm} = L_{\pm,\pm,\pm,\pm}(Z^1_{\pm} , Z^2_{\pm}, \bar Z^3_{\pm}),
\]
where the multilinear form $L_{\pm,\pm,\pm,\pm}$ has symbol
\[
 L_{\pm,\pm,\pm,\pm}(\xi_0,\xi_1,\xi_2,\xi_3,\xi_4) = \frac{-i }{ \pm |\xi_0|^{\frac32} \mp |\xi|_1^{\frac32} \mp |\xi_2|^\frac32
\pm |\xi_3|^\frac32} .
\]
Here the signs on the bottom correspond to complex conjugates in the
corresponding quadrilinear form.  In our case we have an expression in
$W^i$ and $Q^i$, which must be written in terms of $Z^i_{\pm}$, and
the final result must be reexpressed in terms of $W^i$ and $Q^i$.
Thus we obtain a combination of symmetrizations and
antisymmetrizations of the above symbol with respect to the four
signs. Half of these are zero by symmetry. For the rest, we quickly
list them and describe their size.

We begin with the terms in $Q_{[3]}$, for which we have the following
symbols based on the arguments for inputs and for the outputs.
\[
L(WWW \to WWQ) =
\sum_{\pm}   \frac{ -  (\pm   |\xi_3|^{-\frac12} ) }
{ \pm |\xi_0|^{\frac32} \mp |\xi|_1^{\frac32} \mp |\xi_2|^\frac32 \pm |\xi_3|^\frac32}, 
\]
\[
L(WWW \to QQQ) =
\sum_{\pm}   \frac{ (\pm  |\xi_1|^{-\frac12}) (\pm |\xi_2|^{-\frac12})(\pm  |\xi_3|^{-\frac12})
 }
{ \pm |\xi_0|^{\frac32} \mp |\xi|_1^{\frac32} \mp |\xi_2|^\frac32 \pm |\xi_3|^\frac32} ,
\]
\[
L(WQQ \to QQQ) = 
\sum_{\pm}   \frac{ -  (\pm  |\xi_1|^{-\frac12})  
 }
{ \pm |\xi_0|^{\frac32} \mp |\xi|_1^{\frac32} \mp |\xi_2|^\frac32 \pm |\xi_3|^\frac32} ,
\]
\[
L(WQQ \to WWQ ) = 
\sum_{\pm}   \frac{ -  (\pm  |\xi_2|^{-\frac12}) 
 }
{ \pm |\xi_0|^{\frac32} \mp |\xi|_1^{\frac32} \mp |\xi_2|^\frac32 \pm |\xi_3|^\frac32} ,
\]
\[
L(WQQ \to QWW ) = 
\sum_{\pm}   \frac{ -  (\mp  |\xi_1|^{\frac12}) (\pm |\xi_2|^{-\frac12}) (\pm |\xi_3|^{-\frac12}) 
 }
{ \pm |\xi_0|^{\frac32} \mp |\xi|_1^{\frac32} \mp |\xi_2|^\frac32 \pm |\xi_3|^\frac32} .
\]
Here the signs on the top are matched to the signs associated to the
same variable on the bottom. We need to understand the size of these
symbols when $|\xi_0| \ll |\xi_{med}|
:=\min\{|\xi_1|,|\xi_2|,|\xi_3|\}$. We also denote $|\xi_{hi}|
:=\max\{|\xi_1|,|\xi_2|,|\xi_3|\}$. We first look at the size of the
denominators, for which we use the relation \eqref{nonres-tri}.  This
suffices if we have only high frequencies in the numerator, and we get
a symbol bound of the type
\[
|L| \lesssim |\xi_{med}|^{-1} |\xi_{hi}|^{-k},
\]
where the integer $k$ is dictated by the corresponding homogeneity of $L$.
It is also not difficult to see that all half integer powers disappear in the symmetrizations,
and by taking a Taylor series at $\xi_0=0$ we are left with a sum of 
polyhomogeneous symbols and a smooth (i.e., with many derivatives) remainder.

Consider now the case when the medium frequency appears in the numerator with a 
negative power. Then the sum is antisymmetrized with respect to the medium frequency,
so we get an extra cancellation, as in 
\[
\begin{split}
\frac{|\xi|_{med}^{-\frac12}}{ \pm |\xi_0|^{\frac32} + |\xi_{med}|^{\frac32} \mp |\xi_2|^\frac32 \pm |\xi_3|^\frac32} - \frac{|\xi|_{med}^{-\frac12}}{ \pm |\xi_0|^{\frac32} - |\xi_{med}|^{\frac32} \mp |\xi_2|^\frac32 \pm |\xi_3|^\frac32} = \\
\frac{-2 |\xi_{med}|} { (\pm |\xi_0|^{\frac32} + |\xi_{med}|^{\frac32} \mp |\xi_2|^\frac32 \pm |\xi_3|^\frac32)   (\pm |\xi_0|^{\frac32} - |\xi_{med}|^{\frac32} \mp |\xi_2|^\frac32 \pm |\xi_3|^\frac32) } .
\end{split}
\]
Then we obtain the same symbol bound as before.

We continue with the terms in $W_{[3]}$, for which we have the following
symbols based on the arguments for inputs and for the outputs.
\[
L(WWW \to WWW) = 
\sum_{\pm}   \frac{ -  (\pm   |\xi_0|^{-\frac12} ) }
{ \pm |\xi_0|^{\frac32} \mp |\xi|_1^{\frac32} \mp |\xi_2|^\frac32 \pm |\xi_3|^\frac32} ,
\]
\[
L(WWW \to WQQ) = 
\sum_{\pm}   \frac{ (\pm  |\xi_0|^{-\frac12}) (\pm |\xi_2|^{-\frac12})(\pm  |\xi_3|^{-\frac12})
 }
{ \pm |\xi_0|^{\frac32} \mp |\xi|_1^{\frac32} \mp |\xi_2|^\frac32 \pm |\xi_3|^\frac32} ,
\]
\[
L(WQQ \to WQQ) = 
\sum_{\pm}   \frac{ -  (\pm  |\xi_0|^{-\frac12})  
 }
{ \pm |\xi_0|^{\frac32} \mp |\xi|_1^{\frac32} \mp |\xi_2|^\frac32 \pm |\xi_3|^\frac32} ,
\]
\[
L(WQQ \to QWQ ) = 
\sum_{\pm}   \frac{ -  (\pm  |\xi_0|^{-\frac12}) (\pm  |\xi_1|^{-\frac12}) (\mp  |\xi_2|^{\frac12}) }
{ \pm |\xi_0|^{\frac32} \mp |\xi|_1^{\frac32} \mp |\xi_2|^\frac32 \pm |\xi_3|^\frac32} ,
\]
\[
L(WQQ \to WWW ) = 
\sum_{\pm}   \frac{   (\pm  |\xi_0|^{-\frac12}) (\pm |\xi_2|^{-\frac12}) (\pm |\xi_3|^{-\frac12}) 
 }
{ \pm |\xi_0|^{\frac32} \mp |\xi|_1^{\frac32} \mp |\xi_2|^\frac32 \pm |\xi_3|^\frac32} .
\]
The size of these symbols is better than before, in spite of the bad factor 
$|\xi_0|^{-\frac12}$. Indeed, the antisymmetrization with respect to $\xi_0$ yields
a $|\xi_0|$ factor instead, and we obtain a very favorable bound
\[
|L| \lesssim |\xi_{hi}|^k,
\]
and a matching polyhomogeneous expression.
Thus the proof of the Proposition is concluded. 
\end{proof}


\section{The quasilinear modified  energy method  
at high frequencies} \label{s:high-en}

The main idea in this section is to use the normal form transformation
constructed in the previous section in order to produce a cubic energy
functional for our problem in $\dH^k$ for $k \geq 2$. The properties
of this energy functional are summarized in the following proposition:

\begin{proposition}\label{t:en=small}
  For any $n \geq 3$ there exists an energy functional $\Ent$ which
  has the following properties as long as $A \ll 1$:

(i) Norm equivalence:
\begin{equation*}
\Ent (\W,R)= (1+ O(A)) \Ez (\partial^{n-1} \W, \partial^{n-1} R),
\end{equation*}

(ii) Cubic energy estimates:
\begin{equation*}
\frac{d}{dt} \Ent (\W,R)  \lesssim_A AB \Ent (\W,R).
\end{equation*}
\end{proposition}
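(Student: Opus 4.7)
The plan is to construct $\Ent$ in the form $\Ent = \Elint + F^{(3)}$, where $\Elint := \Ez(\partial^{n-1} \W, \partial^{n-1} R)$ is the natural quadratic energy at derivative level $n-1$, equivalent up to constants to $\|(\W, R)\|_{\dH^{n-1}}^2$, and $F^{(3)}$ is a cubic correction designed to cancel all the quadratic source terms appearing in $\frac{d}{dt}\Elint$. I would first compute $\frac{d}{dt}\Elint$ using the diagonalized system \eqref{diff-ngst}. Since the linear part of \eqref{diff-ngst} agrees on the holomorphic class with \eqref{ww-lin}, the base energy $\Ez$ is conserved at linear order, so the source consists of quadratic pairings of the top derivatives of $(\W, R)$ against the quadratic right-hand sides of \eqref{diff-ngst} --- the $b$-transport, the cross terms involving $R_\alpha$ and $W_{\alpha\alpha}$, and the antiholomorphic surface-tension terms. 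Integration by parts rewrites these as a sum $\sum_j Q_j$ of multilinear expressions at the appropriate homogeneity.

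The next step is to split the $Q_j$ according to frequency balance. The \emph{balanced} sources, in which no single factor carries all the high derivatives, can be cancelled by a semilinear cubic correction built directly from the bilinear forms $B^h, C^h, A^h, B^a, C^a, A^a, D^a$ of Proposition~\ref{p:nf}: each balanced source corresponds to the time derivative of a cubic functional whose symbol is obtained by dividing by the elliptic factor $9(\xi+\eta)^2 - 4\xi\eta$. The \emph{unbalanced} paradifferential sources, by contrast, cannot be treated this way, since the normal-form symbols lose a half derivative at high frequency, as noted in the discussion preceding Section~3.1. Following the strategy of \cite{HIT}, these must instead be absorbed into a quasilinear modification of the leading part of $\Elint$: one replaces, for instance, $\int |D^{1/2} \partial^{n-1} R|^2$ by $\int (1 + c(W, Q, \bar W, \bar Q)) |D^{1/2} \partial^{n-1} R|^2$ with a quadratic coefficient $c$ whose time derivative produces the offending source. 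The key new feature compared with gravity waves is that the stronger capillary dispersion $\omega \sim |\xi|^{3/2}$ forces this quasilinear correction to extend down through the top three energy levels of $(\W, R)$, not just the top one.

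Summing these contributions defines $F^{(3)}$. To verify (i), one observes that every term in $F^{(3)}$ is cubic with at least one $L^\infty$ factor controlled by the $A$-norm, so the multilinear estimates of Propositions~\ref{bi-est} and \ref{tri-est} give $|F^{(3)}| \lesssim A \cdot \Elint$, which yields norm equivalence as soon as $A \ll 1$. To verify (ii), note that by construction all quadratic sources cancel, so $\frac{d}{dt}\Ent$ is a sum of quartic and higher multilinear forms in $(\W, R)$. In each such form one factor carries the top half-derivative and is absorbed into the $B$-norm, one low-order factor is absorbed into the $A$-norm, and the remaining $L^2$ factors are absorbed into $\Elint^{1/2} \sim \Ent^{1/2}$, again via Propositions~\ref{bi-est} and~\ref{tri-est}. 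This produces the stated bound $\frac{d}{dt}\Ent \lesssim_A A B \Ent$.

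The main obstacle is the quasilinear step: producing a consistent family of modifications at the top three derivative levels that simultaneously cancels all unbalanced paradifferential sources without creating new derivative losses. This requires carefully tracking how the $|\xi|^{3/2}$ dispersive balance propagates through paracommutators, verifying algebraic compatibility of the three simultaneous quasilinear corrections, and checking that what remains after the three-level fix is genuinely balanced in frequency, so that it is accessible via the semilinear normal-form symbols of Proposition~\ref{p:nf}.
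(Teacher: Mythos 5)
Your proposal is essentially the modified-energy method as in the paper, organized ``from the bottom up'': start with the quadratic energy $\Ez(\partial^{n-1}\W,\partial^{n-1}R)$, compute its time derivative, split the sources by frequency balance, and cancel them by a mix of semilinear normal-form corrections and quasilinear weight modifications. The paper organizes the same construction ``from the top down'': it begins from the normal-form energy $\Ennfz$ built out of $(\tW,\tQ)$, expands it to cubic order, substitutes $R$ for $Q_\alpha$, and only then introduces the quasilinear weights $J^{-(n-\frac12)}$, $J^{-(n-2)}$ on the three leading pieces $J_1,J_2,J_3$. Both routes target the same $\Ent$, and your version is in fact the one the paper uses for the linearized equation in Section~\ref{s:lin} (compare $Er_1,\dots,Er_4$ and the corrections $C_1,\dots,C_4$ in the proof of Proposition~\ref{p:lin-hi}). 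The advantage of the top-down route is that the cubic part of the modified energy is read off directly from the normal-form symbols of Proposition~\ref{p:nf} rather than re-derived by source matching. You correctly flag the key new structural feature --- the quasilinear modification must descend through the top three levels --- and the right estimates (Propositions~\ref{bi-est}, \ref{tri-est}).

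One place where what you wrote, taken literally, would not close: you set $\Ent = \Ez(\partial^{n-1}\W,\partial^{n-1}R) + F^{(3)}$ with $F^{(3)}$ strictly cubic, yet describe the quasilinear weight as $1+c$ with ``a quadratic coefficient $c$.'' These are inconsistent --- if $c$ is degree two, $\int c\,|D^{\frac12}\partial^{n-1}R|^2$ is quartic, not cubic. In fact $c$ must be degree one at leading order: in the paper it comes from $J^{-(n-2)}\approx 1 - 2(n-2)\Re\W$. More substantively, the weight must be the full nonlinear function $J^{-(n-2)}$ (respectively $J^{-(n-\frac12)}$) rather than its cubic Taylor truncation: the quartic terms in $\frac{d}{dt}\Ent$ that carry derivative loss are precisely the ones cancelled by the higher-order Taylor terms of the weight, so a strictly cubic $F^{(3)}$ leaves an unclosed loss of derivatives at quartic order. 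This is the heart of the ``quasilinear'' in the quasilinear modified energy method, and needs to be built into the construction from the outset rather than deferred to the ``main obstacle.''
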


The conclusion of Theorem~\ref{t:cubic} is obtained in a standard fashion
from this proposition via Gronwall's inequality and Sobolev embeddings.
Since $A$ and $B$ are also controlled by $\|(W,Q)\|_{X}$, it also serves to 
prove that the bootstrap assumption \eqref{point-boot} implies the high frequency
part of the $(W,Q)$ bound in \eqref{energy}, namely 
\begin{equation}\label{energy-hi-proof}
\|(W,Q)\|_{\dH^{10}} \lesssim \epsilon t^{C \epsilon^2}.
\end{equation}

\begin{proof}
  The proof uses the {\em modified energy method}  introduced in \cite{HITW},
  \cite{HIT}, which is in some sense a quasilinear improvement of
  Shatah's normal form method \cite{MR803256}. The analysis begins
  with following two remarks:

\begin{itemize}
\item On one hand, one can construct a quasilinear energy functional
  $E^{n}_{nl}$ for our problem, which is equivalent to the $\dH^n$
  energy of $(W,Q)$, and for which we have straightforward energy
  estimates. However, these estimates are quadratic, in the sense that
\[
\frac{d}{dt} E^{n}_{nl} \lesssim_A B \|(W,Q)\|_{\dH^n}^2,
\]

\item  On the other hand, using the normal form variables $(\tW,\tQ)$, one can define 
a normal form energy $\Ennfz $, given by
\[
\Ennfz = \int \left(|\tW^{(n)}|^2
+ \Im [ \tQ^{(n)} \bar\tQ^{(n)}_{\alpha} ]\right)\, d\alpha.
\]
This satisfies cubic estimates, but they involve higher derivatives, 
\[
\frac{d}{dt}  \Ennfz \lesssim_A AB \|(W,Q)\|_{\dH^n \cap \dH^{n+3}},
\]
which do not close. Further, the normal form energy is not equivalent to the $\dH^n$
  energy of $(W,Q)$, and also contains undifferentiated $W$ and $Q$'s. 
\end{itemize}
 
Our goal is to produce an energy functional $\Ent$ which mixes the
best features of both these energies. This means that its cubic
expansion should match the cubic expansion of $\Ennfz$, while its high
frequency part must match the quasilinear energy.
To achieve our goal we start with the normal form energy
\[
\Ennfz = \int \left(|\tW^{(n)}|^2
+ \Im [ \tQ^{(n)} \bar\tQ^{(n)}_{\alpha} ]\right)\, d\alpha.
\]
Since $(\tW,\tQ)$ are normal form variables, this functional satisfies
an energy equation of the form
\begin{equation}
\frac{d}{dt} \Ennfz = quartic + higher,
\label{cubic_energy_estimate}
\end{equation}
but it has several defects:
\begin{enumerate}
\item It is expressed in terms
of $Q^{(n)}$ rather than the natural variable $R^{(n-1)}$.
\item It is not equivalent to the linear energy $\Elind(\W^{(n-1)},R^{(n-1)})$.
\item Its energy estimate has a loss of derivatives.
\end{enumerate}
However, the last two issues concerning $\Ennfz$ arise only at the
level of quartic and higher order terms. This motivates our strategy,
which is to modify $\Ennfz$ by quartic and higher terms to obtain a
``good'' energy $\Ent$ without spoiling the cubic energy estimate
(\ref{cubic_energy_estimate}). We carry out this procedure in two steps: 

\begin{itemize}
\item[(i)] We compute the cubic expansion of the normal form energy,
separating a principal part, which only involves differentiation,
and a lower order part, which uses bilinear multipliers.
\item[(ii)] We construct a modified normal form energy $\Ennf$ that depends
on $(\W^{(n-1)}, R^{(n-1)})$ and is equivalent to the linearized energy
$\Elind(\W^{(n-1)},R^{(n-1)})$; this addresses the issues (1) and (2) above, but not (3);
\item[(iii)] We consider the leading order part $\Ennfhigh$ and modify
that in a quasilinear fashion, in order to address the issue (3) above.
This modification is in part inspired from the analysis of the linearized
equation, and is needed due to the quasilinear nature of
our problem. Thus, we obtain an energy $\Ent$ with good cubic
estimates.
\end{itemize}

\begin{remark} The construction of the quasilinear modified energy in
  the current case turns out to be more involved than in the two
  dimensional gravitational non-surface tension water wave equations,
  see \cite{HIT}.
  \end{remark}

\bigskip

 {\bf The expansion of the normal form energy $\Ennfz$.}
We compute the leading part of $\Ennfz$ and express it in terms of
  $(\W,R)$ and their derivatives; for this we retain the quadratic and
  cubic expressions that appear when expanding each of the terms
  $\Vert \partial^n \tW\Vert^2_{L^2}$, $\Vert \partial^n
  \tQ\Vert^2_{\dot{H}^{\frac{1}{2}}}$ respectively.

We begin with the quadratic part of  $\Ennfz$, expressed in terms of 
$(W,Q)$, which is the integral 
\[
I_1 = \int |W^{(n)}|^2 + \Im ( \bar Q^{(n)}  Q^{(n-1)})\, d\alpha.
\]  
  Next we look at  the cubic part of $\Vert \partial^n \tW\Vert^2_{L^2}$, which is 
\begin{equation}
\label{calc1}
\begin{aligned}
 \int_{\zeta=\xi+\eta}\bar{W}(\zeta)W(\xi)W(\eta)(\xi +\eta)^{2n}B^h(\xi, \eta) +\bar{W}(\xi)W(\zeta)\bar{W}(\eta)\xi ^{2n}B(\zeta, \eta) d\xi d\eta+ c.c.,
\end{aligned}
\end{equation}
where $c.c$ stands for the complex conjugate, and  is as follows
\begin{equation}
\label{calc2}
\begin{aligned}
\int_{\zeta=\xi+\eta}W(\zeta)\bar{W}(\xi)\bar{W}(\eta)(\xi +\eta)^{2n}\bar{B}^h(\xi, \eta) +W(\xi)\bar{W}(\zeta)W(\eta)\xi^{2n}\bar{B}(\zeta, \eta)\, d\xi d\eta.
\end{aligned}
\end{equation}
We add the first term in \eqref{calc1} with the last term in \eqref{calc2}, and obtain
\begin{equation}
\int_{\zeta=\xi+\eta}\bar{W}(\zeta)W(\xi)W(\eta) \mathcal{B}_{\bar WWW} (\xi, \eta)\, d\xi d\eta+\int_{\zeta=\xi+\eta} W(\zeta)\bar{W}(\xi)\bar{W}(\eta) \overline{\mathcal{B}}_{\bar WWW} (\xi, \eta) \, d\xi d\eta,
\end{equation}
where 
\begin{equation*}
\begin{aligned}
\mathcal{B}_{\bar WWW} (\xi, \eta):&= (\xi +\eta)^{2n}B^h(\xi, \eta) + \xi^{2n}\bar{B}(\zeta, \eta)\\
&=\frac{i(\xi+\eta)}{9(\xi +\eta)^2-4\xi\eta} \left[ -(\xi+\eta)^{(2n)}\left(\frac{9}{4}(\xi +\eta)^2-2\xi \eta \right)  \right. \\
& \left.   \qquad \qquad  \qquad \qquad  \qquad+ \xi^{2n}\left( \frac{9}{4}\xi^2 +\frac{19}{4}\xi \eta +3\eta^2\right)+\eta^2 \left( \frac94 \eta^2 +\frac{19}{4}\xi \eta +3\xi^2\right)  \right] .
\end{aligned}
\end{equation*}
Expanding the symbol ${\mathcal B}_{\bar WWW} $ we have 
\[
\mathcal B_{\bar WWW}(\xi,\eta) =  -i(\xi+\eta)^n \!\! \left[  (\frac{n}2 - \frac14) (\xi^{n} \eta +\xi \eta^n)  + \frac12 
(\frac59 n + \frac19) (\xi^{n-1} \eta^2  + \xi^2 \eta^{n-1})\right] \!+ \xi^3 \eta^3 \M^{(2,0)}_{2n-5}. 
\]
 Hence, the leading energy component generated by ${\mathcal B}_{\bar WWW} $ 
is
\[
\int -\left( n -\frac12 \right) \bar{W}^{(n)}W^{(n)}  W_{\alpha} -   (\frac59 n + \frac19)   
\bar{W}^{(n)}W^{(n-1)}  W_{\alpha\alpha}
 \, d\alpha.
\]
Adding the complex conjugate  gives the leading integral
\[
I_2 = \int -\left( 2n - 1 \right) \bar{W}^{(n)}W^{(n)}  \Re W_{\alpha} +     (\frac{10}9 n + \frac29)   
\Im(\bar{W}^{(n)}W^{(n-1)})  \Im W_{\alpha\alpha}
 \, d\alpha.
\]
The remainder has the form
\[
\tilde I_2 = \Re \int \bar \W L_{2n-6}(W_{\alpha \alpha \alpha}, W_{\alpha \alpha \alpha}) \, d\alpha .
\]

There are other types of cubic terms appearing in the energy functional $\Ennfz$
associated to the normal form equations (\ref{nft-ww}), and we organize
them in two categories:
 \begin{itemize}
 \item terms involving the factors $\bar{W}QQ$, and their complex conjugates;
   they are cubic expressions tied to the bilinear forms $C^h$ and
   $A$,
  \item terms involving the factors  $\bar{Q}WQ$, and their complex conjugates; they are cubic expressions corresponding to the bilinear forms $A^h$, $C$ and $D$.
 \end{itemize}
 Analogous to the previous computations we  compute the
 symbol of each of the expressions in discussion, and obtain:
 \[
 \begin{aligned}
{\mathcal B}_{\bar{W}QQ} = &\   (\xi+ \eta) ^{2n}C^h(\xi, \eta)+\xi ^{2n}\bar{A}(\zeta, \eta)
\\ = & \ \frac{-3i(\xi +\eta)}{9(\xi+ \eta)^2-4\xi\eta} \left[ \frac{1}{2}(\xi+\eta)^{2n+1}  -\xi^{2n-1} \left( \xi ^2+\frac32 \xi \eta+\frac32 \eta^2\right) \right],
 \end{aligned}
 \]
 and also 
 \[
 \begin{aligned}
{\mathcal B}_{\bar Q W Q}= & \  \xi^{2n}\bar C(\zeta, \eta)+\zeta^{2n-1} {A}^{h}(\xi, \eta)+\eta^{2n-1}\bar D(\zeta. \xi)
\\
= & \ \xi^{2n}\dfrac{ 3 i (\xi+\eta)^2}{9 (\xi+\eta)^2 - 4 \xi \eta } - i(\xi +\eta)^{2n-1} \frac{3\xi^3 - \frac32 \xi^2 \eta - 5 \xi \eta^2 -\frac92 \eta^3}{9(\xi+\eta)^2 - 4 \xi \eta}  \\ & \ 
\hspace{2cm}- i \eta^{2n-1} \dfrac{ (\xi+\eta)( \frac92 (\xi+\eta )^2 - 4 \xi\eta )}{9 \xi^2 +14 \xi \eta + 9\eta^2}.
\end{aligned}
 \]
For both symbols we compute the expansion
\[ 
{\mathcal B}_{\bar{W}QQ} =   \frac{i}{3} (n-\frac14) (\xi+\eta)^n (\xi^{n-1} \eta + \xi \eta^{n-1}) + 
\xi^2 \eta^2 \M^{(2,0)}_{2n-4}
,
\]
respectively
\[
{\mathcal B}_{\bar Q W Q} =   i (\xi+\eta)^{n} \left(-\frac13(2n-\frac72) \xi^{n-1} \eta  +  (n-1) \eta^{n-1} \xi
-(\frac{17}{18}n - \frac{11}{18}) \eta^{n-2} \xi^2 \right) +\eta^2 \xi^3 \M^{(2,0)}_{2n-5}.
\]
This gives the following leading energy components:
\[
I_3 = 2 \Re \int  - i \frac23 (n-\frac14) \bar W^{(n)} Q^{(n-1)} Q_\alpha \, d \alpha ,
\]
respectively 
\[
\begin{split}
I_4 = & 2 \Re \int i  \bar Q^{(n)} \left(-\frac13(2n-\frac72) W^{(n-1)} Q_\alpha  +  (n-1) Q^{(n-1)} W_\alpha
-(\frac{17}{18}n - \frac{11}{18}) Q^{(n-2)} W_{\alpha\alpha}\right) \, d\alpha 
\\ = & \  \int -2(n-1) \Re W_\alpha \Im (\bar Q^{(n)} Q^{(n-1)}) + \frac23(2n-\frac72) \Im( \bar W^{(n)} Q^{(n-1)}
\bar Q_\alpha) 
\\ & \ \ \ \ 
- (\frac{8}9 n - \frac{2}{9}) \Im W_{\alpha\alpha}|Q^{(n-1)}|^2 
  \, d\alpha ,
\end{split}
\]
together with remainders which have the form
\[
\tilde I_3 = \Re \int \bar \W L^{(2,0)}_{2n-5} (Q_{\alpha\alpha},Q_{\alpha \alpha})  \, d\alpha,
\]
respectively
\[
\tilde I_4 = \Re \int \bar Q_\alpha L^{(2,0)}_{2n-6} (W_{\alpha\alpha\alpha},Q_{\alpha \alpha})  \, d\alpha.
\]

Then the cubic leading expansion of the normal form energy is
\[
E^n_{NF,0} = I_1 + I_2 + I_3 + I_4 + \tilde I_2 +\tilde I_3 + \tilde I_4 + quartic.
\]
We remark here that, in the above expressions there are no more
undifferentiated $(W,Q)$'s, and also that the highest order terms are
at the level of the energy.

\bigskip

{\bf (ii) The modified normal form energy $\Ennf$.}
The goal here  is to replace $Q_\alpha$ by $R$ into the last expansion above. This is a direct substitution in the cubic expressions, where we simply take the sum 
\[
(I_2+ I_3 + I_4 + \tilde I_2 +\tilde I_3 + \tilde I_4)(R,\W).
\]
However, some care must be taken with the second term in  the quadratic part $I_1$. 
We have $Q_\alpha = R(1+\W)$ therefore
\[
Q^{(n)}=R^{(n-1)}(1+\W)+(n-1)R^{(n-2)}\W_{\alpha}+W^{(n)}R+...
\]
Substituting into the corresponding part of $I_1$ we obtain
\[
\begin{split}
\int  \Im (\bar Q^{(n)} Q^{(n-1)})\, d\alpha= & \ \int (1+2\Re \W)\Im (\bar R^{(n-1)}R^{(n-2)})-(2n-3)\bar R^{(n-2)}R^{(n-2)}\Im \W_{\alpha} \\ 
& \quad +2 \Im \left( R^{(n-2)}\bar W^{(n)}\bar R\right)\, d\alpha+ (\tilde I_3 + \tilde I_4)(R,\W) + quartic.
\end{split}
\]  
The last two terms combine with $(I_2+I_3)(\W,R)$. Summarizing
and reorganizing terms, the expressions 
$(I_1+I_2+I_3+I_4)(\W,R)$ are replaced by $J_1+J_2+J_3$ where
\begin{equation*}
\begin{aligned}
J_1 &=  \int (1- \left( 2n - 1 \right)\Re \W)
|\W^{(n-1)}|^2 + (1-2(n-2) \Re \W)\Im (\bar R^{(n-1)}R^{(n-2)}) 
\, d\alpha ,\\
J_2 &= \int  -\frac83 (n-\frac14)  \Re R   \, \Im \left( R^{(n-2)}\bar W^{(n)}\right) \, d\alpha ,\\
J_3 &=  \int  \Im \W_\alpha \left[(\frac{10}9 n + \frac29)   
\Im(\bar{\W}^{(n-1)}\W^{(n-2)})  - (\frac{26}9 n - \frac{29}{9}) 
R^{(n-2)} \bar R^{(n-2)}\right] \, d\alpha ,
\end{aligned}
\end{equation*}
and the lower order terms $I_2+I_3+I_4$ are replaced by $\tilde
J_1+\tilde J_2+\tilde J_3$, which have the form
\begin{equation*}
\begin{aligned}
\tilde J_1 = \Re \int \bar \W L_{2n-6}^{(2,0)}(\W_{\alpha \alpha},& \W_{\alpha \alpha}) \, d\alpha,\quad
 \tilde J_2 = \Re \int \bar \W L^{(2,0)}_{2n-5} (R_{\alpha},R_{ \alpha})  \, d\alpha,\\
 \tilde J_3 &=  \Re \int \bar R L^{(2,0)}_{2n-6} (\W_{\alpha\alpha},R_{\alpha})  \, d\alpha .
 \end{aligned}
\end{equation*}

Thus our modified normal form energy has the form
\[
\Ennf = J_1+J_2+J_3 + \tilde J_1+ \tilde J_2 + \tilde J_3;
\]
and, by construction, has the property that
\[
\Ennf = \Ennfz + quartic + higher .
\]
and thus
\[
\frac{d}{dt} \Ennf = quartic + higher .
\]

\bigskip
{\bf The quasilinear modified energy.}
As defined above, the remaining issue with the energy $\Ennf $ is that its time derivative
cannot be controlled at the same level due to the quasilinear character of the problem.
The last modification below addresses this issue. We observe that these quasilinear
modifications are only needed for the terms $J_1$, $J_2$ and $J_3$ above;
the lower order terms $\tilde J_1,\tilde J_2 ,\tilde J_3$ will be left unchanged.
The quasilinear corrections to $J_1$, $J_2$ and $J_3$ are defined as follows:
\begin{equation}
\begin{split}
E_1 = &  \int  J^{-(n-\frac12)} \left[|\W^{(n-1)}|^2 + (4n-3) \Im\left( \frac{\W_{\alpha}}{1+\W}\right)  \Im ( \bar \W^{(n-1)} \W^{(n-2)})\right]
\\ &  \qquad + J^{-(n-2)} \Im (\bar R^{(n-1)}  R^{(n-2)})  \, d\alpha .
\end{split}
\end{equation}
Here we have added a lower order term in the square bracket, in order
to obtain a bounded time derivative. This is the analogue of the lower
order term arising in the operator $L$ in the study of the linearized problem later on. 
To compensate for that, a matching term is subtracted from $J_3$.
The remaining corrections are
\begin{equation}
E_2 = \int  -\frac83 (n-\frac14)  \Re R   \, \Im \left( (1+\W)^2 R^{(n-2)}\bar \W^{(n-1)}\right)\, d\alpha , \hspace{1in}
\end{equation}
\begin{equation}
E_3 = \int  -(\frac{26}9 n - \frac{29}{9})  \Im \left( \frac{\W_\alpha}{1+\W}\right) \left(  J^{-(n-\frac12)} \Im(\bar \W^{(n-1)} \W^{(n-2)}) + J^{-(n-2) }|R^{(n-2)}|^2\right)  \, 
d\alpha. 
\end{equation}
Our final energy is 
\begin{equation}
\Ent = E_1+ E_2 + E_3 + \tilde J_1 + \tilde J_2 + \tilde J_3.
\end{equation}
We further note that all the terms in $\Ent$ share the same scaling law.

\bigskip

We now proceed to prove that $\Ent$ has the properties in the proposition.
The energy equivalence property 
\[
\Ent = (1+O(A)) \| (\W,R)\|_{\dH^{n-1}}^2 =  (1+O(A)) \| (W,Q)\|_{\dH^{n}}^2
\]
reduces to estimating the cubic and higher terms in $\Ent$ by $O(A))
\| (\W,R)\|_{\dH^{n-1}}^2$. All these terms have the same scaling, and in effect can be viewed 
as multilinear expressions in $\W, R$ and $Y = \frac{\W}{1+\W}$ and their conjugates.
Then the desired conclusion follows easily by Proposition~\ref{bi-est}
interpolation and H\"older's inequality (see also the similar argument
in \cite{HIT}).

It remains to consider the time derivative of $\Ent$.
By inspection we see that 
\[
\Ent = \Ennf + quartic + higher = \Ennfz + quartic + higher,
\]
therefore we have 
\[
\frac{d}{dt} \Ent =  quartic + higher.
\]
A good way to rephrase this is by introducing the truncation operator
$\Lambda^{\geq 4}$ which selects the $quartic + higher$ part of an expression
in $(\W,R)$. Thus the above can be rewritten as
\[
\frac{d}{dt} \Ent =  \Lambda^{\geq 4} \frac{d}{dt} \Ent = 
\Lambda^{\geq 4}  \frac{d}{dt} E_1 + \Lambda^{\geq 4}  \frac{d}{dt} E_2 +  \Lambda^{\geq 4}  \frac{d}{dt} E_2
+  \Lambda^{\geq 4}  \frac{d}{dt} \tilde J_1 +  \Lambda^{\geq 4}  \frac{d}{dt} \tilde J_2 +  \Lambda^{\geq 4}  \frac{d}{dt} \tilde J_3.
\]
Now we can separately estimate each term in the above sum. Again, they
all share the same scaling law. The expressions arising here are fully 
nonlinear; however, one can view them as multilinear forms in $\W$, $R$ and $Y$
(and their conjugates).

For the purpose of this computation we denote by $\err$ any
multilinear form in $\W$, $R$ and $Y$ and their complex conjugates, with the
same scaling as above, which

\begin{itemize}
\item involves only differentiation, multiplication and at most one
  $L_0$ bilinear form before the final integration in $\alpha$.

\item does not contain a bilinear expression involving more derivatives
  than the energy $\Ent$, i.e., more that $\bar \W^{(n-1)}
  \W^{(n-1)}$, $ \bar R^{(n-1)} R^{(n-2)}$, respectively $\bar
  \W^{(n-1)} R^{(n-2)}$.

\item does not contain any undifferentiated $R$ as the lowest frequency factor.
\end{itemize}
A quick parity analysis reveals that such expressions contain either

\begin{enumerate}
\item exactly one $R$ factor and a total number of $2n-1$ derivatives, or
\item exactly three $R$ factors and a total number of $2n-2$ derivatives.
\end{enumerate}

All such multilinear forms  can all be estimated by Proposition~\ref{bi-est}, Sobolev embeddings,
interpolation and H\"older's inequality in a scale invariant fashion
as follows:
\[
|\Lambda^{\geq 4} \err| \lesssim_A  AB \|(\W,R)\|_{\dH^{n-1}}^2.
\]
For more details we refer the reader to the similar calculations in \cite{HIT}, and in particular
to Lemma 2.4 in appendix B of \cite{HIT}.

In particular we directly have
\[
\frac{d}{dt} \tilde J_1+\tilde J_2+\tilde J_3 = \err ,
\] 
which allows us to dispense with these terms. It remains to carefully
compute the time derivative of each of $E_1$, $E_2$ and $E_3$, in
order to insure that we obtain only $\err$ terms.  \medskip

{\bf a) The time derivative of $E_1$.}
Differentiating any of the coefficients of the leading order terms yields $\err$ type expressions,
so we can write
\[ 
\frac{d}{dt} E_1 = Er_1 + Er_2 + \err,
\]
where
\[
Er_1 = \int \! 2  J^{-(n-\frac12)} \Re (\bar \W^{(n-1)} \W^{(n-1)}_t) + 2
J^{-(n-2)} \Im (\bar R^{(n-1)} R^{(n-2)}_t) + \partial_\alpha
J^{2-n}\Im ( \bar R^{(n-2)} R^{(n-2)}_t) \, d\alpha,
\]
and 
\[
Er_2 = \int 2  (4n-3) \Im\left( \frac{\W_{\alpha}}{1+\W}\right)  \Im ( \bar \W^{(n-1)} \W^{(n-2)}_t)\, d\alpha.
\]
To compute these two expressions we begin with differentiating in the equations 
\eqref{diff-ngst}:
\begin{equation*}
\left\{
\begin{aligned}
 \W^{(n-1)}_{t}= & - b\W^{(n)}- \frac{1+\W}{1+\bar{\W}} R^{(n)}  - n R^{(n-1)}\left(
\frac{\W_\alpha}{1+\bar \W}   - 
 \frac{(1+\W)\bar \W_{\alpha}}{(1+\bar \W)^2}\right) + ...  \\
 R^{(n-2)}_t= &-b R^{(n-1)}   -i \W^{(n)}  \frac{1}{J^\frac12(1+\bar \W)^2} + \frac{i}2  \W^{(n-1)} 
\left(  \frac{(5n-4)\W_\alpha}{J^\frac12(1+\W)^3}  +  \frac{(n-2) \bar \W_\alpha}{J^\frac32(1+\W)} \right)+...
\end{aligned}
\right.
\end{equation*}
where on the right we have kept only the terms which may contribute to
the leading part of the time derivative of $E_1$. Thus integrating by
parts and discarding lower order $\err$ type contributions we obtain
\[
\begin{split} 
Er_1 \approx  & \ 2  \Re \int - J^{-(n-\frac12)} \frac{1+\W}{1+\bar{\W}} R^{(n)} \bar W^{(n)} - 
 J^{-(n-2)}   \frac{1}{J^{1/2}(1+\bar \W)^2} \bar R^{(n-1)} W^{(n+1)}    
\\
& +    \bar W^{(n)} R^{(n-1)}\left[-n J^{-(n-\frac12)}\left( \frac{\W_\alpha}{1+\bar{\W}}   -
 \frac{(1+\W)\bar \W_{\alpha}}{(1+\bar \W)^2}\right)\right. \\ & \left.+ \frac12 J^{-(n-2)}\left(  \frac{ (5n-4)\bar \W_\alpha}{J^\frac12(1+\bar\W)^3}  +\frac{ (n-2)  \W_\alpha}{J^\frac32(1+\bar \W)}\right) +   \partial_\alpha J^{2-n} \frac{1}{J^{1/2}(1+\bar \W)^2} \right] \, d\alpha
\\
 \approx & \ \ 2  \Re \int - J^{-(n-\frac32)} \frac{1}{(1+\bar{\W})^2} (R^{(n)} \bar W^{(n)} +  R^{(n-1)} \bar W^{(n+1)})   
\\ 
& +    J^{-(n-\frac32)} \bar W^{(n)} R^{(n-1)}
\left[ -n\frac{\W_\alpha}{J(1+\bar \W)} + (3n-1) \frac{\bar{\W}_\alpha}{(1+\bar \W)^3} 
 \right]  \, d\alpha
\\
=  &  \ 2  (2n-\frac32) \Re \int J^{-(n-\frac32)}\bar W^{(n)} R^{(n-1)} \left(- \frac{\W_\alpha}{J(1+\bar \W)} +
  \frac{\bar \W_\alpha}{(1+\bar \W)^3}\right)\, d \alpha
\\
= &  \ 2  (4n-3)  \int J^{-(n-\frac32)} 
\Im \left(\bar W^{(n)} R^{(n-1)} \frac{1}{(1+\bar \W)^2}\right) \Im\left( \frac{\W_{\alpha}}{1+\W}\right)\,
d \alpha .
\end{split}
\]
A shorter computation gives 
\[
Er_2 = - \int  2   (4n-3)J^{-(n-\frac32)} 
\Im \left(\bar W^{(n)} R^{(n-1)} \frac{1}{(1+\bar \W)^2}\right) \Im\left( \frac{\W_{\alpha}}{1+\W}\right) 
\, d\alpha +\err,
\]
which cancels $Er_1$ and leads to the desired bound
\[
\frac{d}{dt} E_1 = \err.
\]

\medskip

{\bf b) The time derivative of $E_3$.} Again, if the time derivative applies to the $\Re R$ 
coefficient, to $J$,  or to $\W$, then we get lower order contributions.  It remains to 
consider the two terms when the time derivative applies to either $R^{(n-2)}$ or to $\W^{(n-1)}$.
In both cases we need to consider only the leading order contributions,
\[
 \W^{(n-1)}_{t}=  - \frac{1+\W}{1+\bar{\W}} R^{(n)} + ...  \qquad 
 R^{(n-2)}_t=  - i \W^{(n)}  \frac{1}{J^\frac12(1+\bar \W)^2} +...
\]
Then the two terms exactly cancel and we obtain
\[
\frac{d}{dt} E_3 = \err.
\]

{\bf b) The time derivative of $E_2$.} Again, if the time derivative falls on $\Im \W_\alpha$ 
coefficient, or on $J$,  then we get lower order contributions.  Then
we can write
\[
\frac{d}{dt} E_2 = Er_3 + Er_4 + \err,
\]
where 
\[
Er_3 = 2  \int \Im \W_\alpha  J^{-(n-\frac12)} \Im(\bar W^{(n)} W^{(n-1)}_t) \, d\alpha,
\]
and 
\[
Er_4 = 2 \int \Im \W_\alpha     J^{-(n-2) }\Re(\bar R^{(n-2)} R^{(n-2)}_t ) \,
d\alpha.
\]
In both cases we need to consider only the leading order contributions of the time 
derivatives, which exactly cancel
and we also obtain
\[
\frac{d}{dt} E_2 = \err.
\]
The proof of the proposition is concluded.

\end{proof}


 \section{Energy estimates at low frequency}
\label{s:low-en}

 Here we prove the $\dH^\sigma$ energy estimates for $(W,Q)$. For this we 
work directly with the equation \eqref{ngst}, which we no longer
 need to treat  as a nonlinear problem. Instead we can
 apply directly the normal form transformation to eliminate the quadratic terms,
and then do semilinear analysis on the remaining evolution, taking advantage
of the high frequency bounds in the previous section. 

Unfortunately the normal form transformation \eqref{nft} does not suffice for our 
purposes here. This is largely due to the weaker pointwise control that we 
have on the function $W$, but also due to the fact that we are estimating $Q$ 
in a negative Sobolev space. For these reasons, we will have to remove 
certain cubic non-resonant interactions
 via a further {\em cubic} modification to the normal form.
This motivates the following 

\begin{proposition}\label{p:low-en}
There exist normal form variables $( \ttW,\ttQ)$, obtained by a further cubic 
modification of the normal form \eqref{nft},
\begin{equation}
\ttW = W + W_{[2]} + W_{[3]}, \qquad \ttQ = Q + Q_{[2]} + Q_{[3]},
\end{equation}
where $(W_{[3]},Q_{[3]}) \in  (\B^3_{0}(W,Q), \C^{3}_{0}(W,Q))$,
 with the following properties:  

i) The $\dH^\sigma$  energies are equivalent,
\begin{equation}\label{tt-en}
\| ( \ttW,\ttQ)\|_{\dH^\sigma} \approx_A \|(W,Q)\|_{\dH^\sigma},
\end{equation}

(ii) The variables $( \ttW,\ttQ)$ solve a cubic equation:
\begin{equation}\label{ww-nft-tt}
\left\{
\begin{aligned}
&  \ttW_t + \ttQ_\alpha =  \ \ttG
\\
& \ttQ_t +i \ttW_{\alpha\alpha} = \ \ttK,
\end{aligned} 
\right.
\end{equation} 
where the following estimates hold:
\begin{equation}
\label{tt-gk}
\| ( \ttG,\ttK)\|_{\dH^\sigma} \lesssim   \|(W,Q)\|_{X}^2    \|(W,Q)\|_{\dH^2 \cap \dH^\sigma},
\end{equation}
\begin{equation}
\label{tt-gk4}
\| ( \ttG^{(4+)},\ttK^{(4+)})\|_{\dH^\frac12} \lesssim t^{-\frac12+2\sigma}  \|(W,Q)\|_{X}^2    \|(W,Q)\|_{\dH^2 \cap \dH^\sigma}.
\end{equation}

\end{proposition}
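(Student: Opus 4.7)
The plan is to start from the quadratic normal form $(\tW,\tQ)$ constructed in Proposition \ref{p:nf}, which by \eqref{nft-ww} already satisfies a cubic equation with right hand side $(\tG,\tK)=(\tG^{(3)}+\tG^{(4+)},\tK^{(3)}+\tK^{(4+)})$. The cubic terms are given explicitly in \eqref{gkt} as compositions of the bilinear normal form multipliers with the quadratic nonlinearities $(G^{(2)},K^{(2)})$, plus the direct cubic nonlinearities $(G^{(3)},K^{(3)})$ from \eqref{ww-cubic}. I would first perform a Littlewood--Paley decomposition of each of these trilinear expressions in the three input frequencies and the output frequency, splitting each piece into two categories:
\begin{itemize}
\item \emph{Balanced} pieces, where the output dyadic frequency is comparable to the largest input dyadic frequency;
\item \emph{Unbalanced} pieces, where the output frequency is much smaller than the (two largest) input frequencies, so that the hypothesis $|\xi_{out}| \ll \min\{|\xi_1|,|\xi_2|\}$ of Proposition \ref{p:nf-cubic} is satisfied.
\end{itemize}

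The balanced pieces can be left inside $(\ttG,\ttK)$. For them, one input frequency is comparable to the output, so it can absorb the $\dH^\sigma$ weight, while the other two inputs are estimated in $L^\infty$ using the $X$ norm via Propositions \ref{bi-est}--\ref{tri-est} and the fact that the bilinear symbols of $A^h,B^h,\ldots,D^a$ are homogeneous of order at most $1$. This produces the bound \eqref{tt-gk} for these contributions, with the $\|(W,Q)\|_X^2$ factor coming from two $L^\infty$ factors and the $\|(W,Q)\|_{\dH^2\cap\dH^\sigma}$ from the remaining one (the $\dH^2$ component is needed since some of the trilinear symbols carry up to two derivatives on a single high-frequency input).

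For the unbalanced pieces, which are exactly the ones that fail to satisfy \eqref{tt-gk} when tested against the low-frequency output, I would apply Proposition \ref{p:nf-cubic} term by term to produce trilinear correctors $(W_{[3]},Q_{[3]}) \in (\B^3_0(W,Q),\C^3_0(W,Q))$ whose linearized evolution cancels each unbalanced piece. The proposition guarantees that these correctors exist with symbols in $\M_0$, provided the source terms lie in the right homogeneity classes, which is precisely what \eqref{gkt} supplies (after checking that each summand in \eqref{gkt} sits in $\C_{-2}^3$ or $\B_{-1}^3$ up to reorganization of derivatives). The residue of the substitution will be a quartic expression, produced when the time derivative of the trilinear correctors hits the $(G^{(3+)},K^{(3+)})$ parts of the original equation \eqref{ww-multi} rather than its linear part; these feed into $\ttG^{(4+)},\ttK^{(4+)}$.

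To close, I would verify the two bounds. For \eqref{tt-gk}, the balanced contributions are handled as above, and the unbalanced ones have been removed, leaving only contributions of the same type from $\tG^{(4+)},\tK^{(4+)}$; by the same paraproduct estimates with one additional $L^\infty$ factor, these are a fortiori controlled. For the quartic estimate \eqref{tt-gk4}, every remaining term carries at least three factors of $(W,Q)$, which I would estimate in $L^\infty$ via the bootstrap assumption $\|(W,Q)\|_X \lesssim \epsilon/\sqrt{t}$: two factors give the gain $t^{-\frac12+2\sigma}\|(W,Q)\|_X^2$ (the $t^{2\sigma}$ slack absorbs the slight mismatch between $L^\infty$ control of $D^j W,D^j Q$ and the $\dH^{\frac12}$ norm being estimated), while the remaining factors contribute $\|(W,Q)\|_{\dH^2\cap\dH^\sigma}$. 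Finally, the energy equivalence \eqref{tt-en} follows because $(W_{[2]},Q_{[2]})$ is bilinear with $\M_0$-class symbols and $(W_{[3]},Q_{[3]})$ is trilinear, so by Propositions \ref{bi-est}--\ref{tri-est} they are bounded in $\dH^\sigma$ by $A\|(W,Q)\|_{\dH^\sigma}$ and $A^2\|(W,Q)\|_{\dH^\sigma}$ respectively.

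The main obstacle I anticipate is bookkeeping rather than conceptual: one must match each summand in \eqref{gkt} to the correct trilinear parity type in Proposition \ref{p:nf-cubic} (including the mixed type $(1,2)$ that arises from conjugate factors generated by $\bar G^{(2)},\bar K^{(2)}$), and verify that the symbols produced after expansion of the $B^h,A^h,\ldots$ multipliers against $G^{(2)},K^{(2)}$ genuinely fall into the polyhomogeneous class allowed by Proposition \ref{tri-est}, so that the paraproduct bounds apply uniformly across all the pieces.
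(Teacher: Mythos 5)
Your overall plan — start from the quadratic normal form $(\tW,\tQ)$, Littlewood–Paley the cubic source $(\tG^{(3)},\tK^{(3)})$, and cancel the non-resonant part with a cubic correction built from Proposition~\ref{p:nf-cubic} — is the right general framework, and your treatment of the $hhh\to l$ interactions does match one of the two corrections the paper performs. However, there is a genuine gap in the other direction.

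Your ``balanced/unbalanced'' dichotomy classifies pieces only according to whether the output dyadic frequency is comparable to or much smaller than the largest input. This puts the $lhh\to h$ configuration — low frequency on one input, high on the other two, output comparable to the high inputs — into the ``balanced'' bucket, where you propose to put the comparable-frequency factor in $\dH^\sigma$ and the remaining two inputs in $L^\infty$ via the $X$ norm. But the dangerous terms in $(\tG^{(3)},\tK^{(3)})$ carry an \emph{undifferentiated} $W$ factor (coming from the $W_{[2]},Q_{[2]}$ leading terms, cf.\ \eqref{nft-lead} and \eqref{gkt-nodiff}), and $\|W\|_{L^\infty}$ is \emph{not} controlled by $\|(W,Q)\|_X$ — the $X$ norm only starts at $D^{1/2}W$. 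Under the bootstrap one only has the weaker bound \eqref{pointw}, $\|W\|_{L^\infty}\lesssim \epsilon(|\alpha|+t^{2/3})^{\sigma-1/2}$, which decays too slowly to produce the $t^{-1}$ rate needed on the right of \eqref{tt-gk}. So the $lhh\to h$ case with an undifferentiated low-frequency $W$ cannot be left inside $(\ttG,\ttK)$, yet it also cannot be removed by Proposition~\ref{p:nf-cubic}, whose hypothesis $|\xi_{out}|\ll\min\{|\xi_1|,|\xi_2|\}$ fails when the output is comparable to the high inputs.

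The paper resolves this with a second, \emph{separate} cubic normal form correction — not an application of Proposition~\ref{p:nf-cubic} — namely $W_{[3],0}=-\tfrac12\Re W\, W_{[2],\alpha}$ (and $Q_{[3],0}=-\tfrac12\Re W\, Q_{[2],\alpha}$) restricted to $lhh\to h$ interactions. This exploits the algebraic fact that $G^{(2)},K^{(2)}$ themselves are removed by the bilinear normal form of Proposition~\ref{p:nf}; the time-derivative of $W_{[3],0}$ then reproduces $-\tfrac12\Re W\,G^{(2)}_\alpha$ up to terms in which the low-frequency $W$ factor becomes differentiated, which are then directly estimable. A closely related structural point you do not address is that $\tK$ must be measured in the \emph{negative} space $\dot H^{\sigma-1/2}$ at low output frequency (the second obstruction the paper names explicitly); this is what forces the paper to single out $\tK_1=-\tfrac13\Re Q\,G^{(2)}_\alpha$ and the $hhh\to l$ part of $\tK_2$ for special treatment, whereas the analogous $\tG$ terms (measured in $\dot H^\sigma$, a positive space) do not require a cubic correction at low output. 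Without the $lhh\to h$ correction and without distinguishing the negative-index estimate on $\tK$, the argument does not close.
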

We remark that the last bound \eqref{tt-gk4} is not used in the proof of the energy estimates.
However, it will be very useful in the last section, where we obtain the asymptotic equation.
Its role there is to show that the quartic and higher terms do not affect at all the asymptotic equation.
\medskip

Before proving the proposition, we show how to use it to conclude the
proof of the $\dH^\sigma$ part of the energy bounds for $(W,Q)$ in
\eqref{energy} in our boootstrap argument. Precisely, our goal here is
to prove the bound
\begin{equation}\label{low-sigma}
\| (W,Q) \|_{\dH^\sigma} \lesssim \epsilon t^{C \epsilon^2} 
\end{equation}
under the initial data assumption \eqref{data} 
\begin{equation}\label{data-l}
\| (W,Q)(0) \|_{\dH^\sigma \cap \dH^6} \lesssim \epsilon ,
\end{equation}
and the bootstrap assumptions \eqref{energy-boot}, \eqref{point-boot}
and \eqref{pointw-boot}.  
Linear energy estimates for $( \ttW,\ttQ)$ show that
\[
\frac{d}{dt} \| ( \ttW,\ttQ)\|_{\dH^\sigma} \lesssim  
 \|(W,Q)\|_{X}^2    \|(W,Q)\|_{\dH^2 \cap \dH^\sigma}.
\]
For the $\dH^2$ norm above we use the high frequency bound \eqref{energy-hi-proof}.
Using also the energy equivalence  \eqref{tt-en}, we obtain
\[
\frac{d}{dt} \| ( \ttW,\ttQ)\|_{\dH^\sigma} \lesssim 
 \epsilon^2 t^{-1} ( \epsilon t^{C \epsilon^2} +  \| ( \ttW,\ttQ)\|_{\dH^\sigma}).
\]
Hence by Gronwall's inequality we obtain
\[
\| (\ttW,\ttQ) \|_{\dH^\sigma} \lesssim \epsilon t^{C \epsilon^2} .
\]
Now we can return to \eqref{low-sigma} via the energy equivalence \eqref{tt-en}. 
The rest of the section is devoted to the proof of the proposition.

\begin{proof}[Proof of Proposition~\ref{p:low-en}]
 We begin with some heuristic considerations. 
The straightforward  approach to this proof would be to work 
directly with the normal form variables defined by 
our quadratic normal form transformation,
\[
\tW = W+W_{[2]}, \qquad \tQ = Q+Q_{[2]},
\]
which solve cubic equations,
\begin{equation}\label{nft-ww-re}
\left\{
\begin{aligned}
 \tW_t + \tQ_\alpha =  \ \tG(W,Q)
\\
 \tQ_t +i \tW_{\alpha\alpha} = \ \tK(W,Q),
\end{aligned} 
\right.
\end{equation}
 and then estimate the cubic terms $(\tG,\tK)$ as in the proposition, by combining 
one $L^2$ bound with two $L^\infty$ bounds. However, this naive analysis fails for two reasons:
\begin{itemize}
\item[(i)] The undifferentiated $W$ appears in the normal form. This is a problem because
we  have neither an $L^2$ bound for $W$ at low frequencies, nor an $L^\infty$ bound for $W$
with $t^{-\frac12}$ decay.

\item[(ii)] The expression $\tK$ needs to be estimated in a negative Sobolev space $\dot H^{\sigma-\frac12}$, which is a problem at low frequencies, as it does not follow directly from a
trilinear superposition of  one $L^2$ bound and two $L^\infty$ bounds.
\end{itemize}

Based on these considerations, our strategy will be to separate the 
cubic terms in $(\tG,\tK)$ into a good portion and a bad portion so that 

\begin{itemize}
\item[(a)] The good portion of $(\tG,\tK)$ can be estimated directly using  one $L^2$ bound 
and two $L^\infty$ bounds. 
\item[(b)] The bad portion of $(\tG,\tK)$  is non-resonant, and thus can be eliminated with a further 
cubic normal form correction.
\end{itemize} 

When implementing  the above strategy we will keep in mind several principles:

\begin{itemize}
\item[I.]  The terms with undifferentiated $W$ in both equations in
  \eqref{nft-ww-re} are bad when $W$ has the low frequency, and need
  to be renormalized.
\item[II.] The undifferentiated terms in $\tK$ are bad at low
  frequency, and need to be renormalized when the three input
  frequencies are larger.
\item[III.] The linearization of the renormalized equation will later be used
  as the renormalization of the linearized equation, so we need to make
  sure that our estimates still apply there. The difficulty is that
  for the normalized variables $(w,q)$ we only have access to the
  $L^2$ norm, and not to any decaying $L^\infty$ bound. Thus, in all
  multilinear estimates we have to allow for the case when any one of
  the terms admits only $L^2$ type bounds.
\end{itemize}

We begin with the normal form equations \eqref{nft-ww} for
$(\tW,\tQ)$, with $(\tG,\tK)$ as in \eqref{gkt}.  For the expressions
$\tG$ and $\tK$ we need to separate the worst terms at low frequency,
namely the ones which contain an undifferentiated $W$ or $Q$
factor. This is done using the expansion \eqref{gkt-nodiff}.
Based on this, we first express $\tG$ in the form
\[
\tG = \tG_0 + \tG_1+\tG_2, \qquad 
\]
where $\tG_0$ and $\tG_1$  contain cubic terms with a $W$ factor, respectively without, 
\[
\tG_0= -\frac12 \Re W  G^{(2)}_{\alpha}, \qquad \tG_1 =
L_0(W_\alpha,W_\alpha,Q_\alpha) + L_0(Q,Q_\alpha,Q_\alpha) +
L_0(Q,W_\alpha,\W_{\alpha\alpha} ),
\]
while $\tG_2$ contains terms which are quartic and higher. We further
note that $\tG_2$ may contain a single undifferentiated factor.

For $\tK$ we need a better bound at low frequency, so we split it further as
\[
\tK = \tK_0 + \tK_1+ \tK_2  + \tK_3,
\]
where $\tK_0$ and $\tK_1$ contain all cubic terms with an undifferentiated factor,
\[
 \tK_0= -\frac12 \Re W  K^{(2)}_{\alpha}, \qquad 
\tK_1 = - \frac13 \Re Q  G^{(2)}_\alpha.
\]
$\tK_2$ contains the remaining cubic terms with all factors differentiated, 
\[
\tK_2 = L_0(W_\alpha,W_\alpha,W_{\alpha \alpha}) + L_0(W_\alpha,Q_\alpha,Q_\alpha),
\]
and $\tK_3$ contains all the the quartic terms. We observe that $\tK_3$ may contain 
at most one undifferentiated $W$ factor.

\bigskip {\bf The terms $\tG_0$. $\tK_0$.} These are the worst cubic
terms, since we have less control on $W$ than on everything else.
However, the co-factors $G^{(2)}_\alpha$ and $K^{(2)}_\alpha$ are
differentiated, and also each of the factors in $G^{(2)}$ and $K^{(2)}$ are
differentiated. This allows us to separate a single unfavorable
case, namely when the $W$ factor has frequency less than the
output frequency, and than both of the entries in $G^{(2)}$, respectively
$K^{(2)}$. Precisely, we decompose
\[
\tG_0 = \tG_0^{good}  +\tG_0^{lhh \to h}, \qquad \tK_0 = \tK_0^{good}  +\tK_0^{lhh \to h},
\]
where 
\[
\tG_0^{lhh \to h} = \sum_{\lambda} P_{\gg \lambda} \left[  W_\lambda 
G^{(2)}(W_{\gg \lambda},Q_{\gg \lambda}) \right], \qquad 
\tK_0^{lhh \to h} = \sum_{\lambda } P_{\gg \lambda} \left[  W_\lambda 
K^{(2)}(W_{\gg \lambda},Q_{\gg \lambda}) \right].
\]
We can bound the terms $( \tG_0^{good},  \tK_0^{good})$ as in \eqref{tt-gk} as follows.
If the $W$ frequency is larger than the output frequency, then we can move 
half a derivative from the $G^{(2)}_\alpha$, respectively  $K^{(2)}_\alpha$ onto $W$, obtaining 
terms of the form 
\[
(D^\frac12 L_0^{lhh \to h} ( D^{\frac12} W, G^{(2)}) ,  D^\frac12 L_0^{lhh \to h} ( D^{\frac12} W, K^{(2)})).
\]
The half derivative in front is useful at low frequency in the second term, as it 
allows us to obtain the $\dot H^{\sigma-\frac12}$ bound (at low frequency). 
Now we are left with a trilinear form where all three factors are at least half differentiated.
Thus we can use an $L^2$ bound on any one of them, and an $L^\infty$ bound on the other
two. 

On the other hand, if one of the inputs of $(G^{(2)},K^{(2)})$ has frequency lower or comparable to 
$W$, then we can move  half of its derivative to $W$, and we obtain an expression of the form
\[
(\partial_\alpha L_0^{lhh \to h} ( D^{\frac12} W, G^{(2)}((W,Q), D^\frac12(W,Q)) ,  
D^\frac12 L_0^{lhh \to h} ( \partial_\alpha W, K^{(2)}((W,Q), D^\frac12(W,Q)))).
\] 
Here all three factors inside are again at least half differentiated, so we can  bound 
  any one of them in $L^2$, and the remaining two in $L^\infty$. 

It remains to consider the bad terms $(\tG_0^{lhh \to h},\tG_0^{lhh \to h})$; these are 
non-resonant, so we can factor them out with a cubic normal form correction. The simplest
way to achieve this is to  take advantage of the fact that $G^{(2)}$ and $K^{(2)}$ can be corrected 
with the normal form transform computed in Proposition~\ref{p:nf}. This leads us to define 
the first round of cubic normal form corrections as $(W^{lhh \to h}_{[3],0},Q^{lhh \to h}_{[3],0})$,
which are defined by adding frequency localizations as above to the trilinear expressions
\[
W_{[3],0} = -\frac12 \Re W W_{[2],\alpha}, \qquad 
Q_{[3],0} = -\frac12 \Re W Q_{[2],\alpha}.
 \]
 We have 
\[
\begin{split}
\partial_t W_{[3],0}  + \partial_\alpha Q_{[3],0}   - \tG_{0}  = & \ -\frac12  W_{[2],\alpha} \Re W_t
 - \frac12 \Re W_\alpha Q_{[2],\alpha}
+ \frac12 \Re W  (G^{(2)} - W_{[2],t} - Q_{[2],\alpha} )_\alpha 
\\
= & \  \frac12 \Re Q_\alpha  W_{[2],\alpha}   - \frac12 \Re W_\alpha Q_{[2],\alpha}
 -  \frac12 \Re G  W_{[2],\alpha} - 
\frac12 \Re W (\tG-G^{(3+)})_\alpha .
\end{split}
\]
The first two terms are cubic, and even contain an undifferentiated $W$; 
however, after the $lhh\to h$ localization we are left with a differentiated low frequency
factor, so these terms are handled as discussed above, by moving a half derivative
from the first, low frequency, factor to the undifferentiated factor in $W_{[2]}$ or 
$G_{[2]}$.   The remaining terms are quartic, and may even contain two
undifferentiated $W$ factors. But this is still acceptable, as we can
bound them in $H^1$.  Indeed, since $\sigma$ is small therefore we can
place the undifferentiated factors in $L^4$ but still with some time
decay, and bound each of the two differentiated factors in $L^\infty$
by $t^{-\frac12}$.

Similarly we have 
\[
\begin{split}
\frac{d}{dt} Q_{[3],0} + i  \partial_\alpha^2 W_{[3],0}  -  \tK^{lh}_{0}  =   \frac12 \Re Q_\alpha  Q_{[2],\alpha}  
- i  \frac12 \Re W_{\alpha\alpha} W_{[2],\alpha}  
-  \frac12 G Q_{[2],\alpha} -  \frac12 \Re W (\tK-K^{(3+)})_\alpha .
\end{split}
\]
The first two terms on the right are again cubic, and further, by the
$lhh\to h$ frequency localization we can both move the derivative on
$Q_{[2],\alpha} $, respectively $ W_{[2],\alpha} $ onto the output,
and still be left with a derivative onto the first, low frequency,
factor.  Thus we can bound them as above.  The remaining terms are
quartic, and may even contain two undifferentiated $W$ factors. But
this is still acceptable, as we can bound them in $L^2 \cap \dot
H^{-1}$.  To get the output one derivative lower than before, we again
take advantage of the fact that the $lhh\to h$ frequency localization
allows us to move the derivative in the second factor onto the
output. Hence it remains to estimate the remaining quadrilinear form
in $H^1$, which is done as before.

\bigskip 

{\bf The term $\tG_1$.}
 This is  simpler, as we can place either factor
in $L^2$, and the remaining factors are directly bounded in $L^\infty$
by $t^{-\frac12}  \| (W,Q)\|_{X}$. Again, we bound these terms in the inhomogeneous
Sobolev space $H^1$.

\bigskip 

{\bf The term $\tG_2$.}  This is exactly as before. The three differentiated factors 
are bounded in either $L^2$ or $L^\infty$, while the extra fourth factor provides 
some additional time decay, even if it is only a $W$ factor. 
\bigskip

{\bf The term $\tK_1$.} Just as $\tG_1$, this can be estimated in $L^2$, which takes care
of the high frequencies ($\geq 1$). For low frequencies,  we need to get a better bound 
than $L^2$. For this we commute the co-factor derivative
as 
\[
\begin{split}
\tK_1 = & \ \frac12 \Re Q_\alpha K^{(2)}  - \frac12 \partial_\alpha\left[\Re Q K^{(2)}\right]  .
\end{split}
\]
The first term is moved into $\tK_2$, while the second is an exact derivative 
applied to a trilinear expression which can be placed in $L^2$ just as $\tG_1$.

\bigskip

{\bf  The term $\tK_2$}. This can be estimated directly in $L^2$, so it is 
 good at high frequency ($\geq 1$).  It remains to consider the  low frequencies.
For this we recall that $\tK_2$ has all differentiated factors, more precisely
\[
\tK_2= \tK_2(W_\alpha,Q_\alpha) \in \B^3_{-1} (W_\alpha,Q_\alpha).
\]
We split this into two parts, depending on whether the output frequency is smaller
than all of the inputs,
\[
 \tK_2 = \tK_{2}^{good} +  \tK_{2}^{hhh \to l},
\]
where 
\[
 \tK^{hhh\to l}_{2} = \sum_{\lambda < 1} P_{\ll \lambda} \tK_2(W_{>\lambda,\alpha},Q_{> \lambda,\alpha}).
\]
In $\tK_{2}^{good}$ one of the factors has frequency equal or lower than
the output so we can move a half derivative onto the output, and
estimate the rest in $L^2$ using one $L^2$ and two $L^\infty$ bounds.

It remains to consider the only nontrivial term $\tK^{hhh\to l}_{2}$.
This cannot be estimated directly in $\dot H^{\sigma-\frac12}$ at low
frequency, as the only way to gain improved summability is by trading
off decay.  However, it has the redeeming feature that the balance of
the frequencies is so that the cubic interaction is non-resonant. This
analysis was carried out in Proposition~\ref{p:nf-cubic}, which shows
that we can find a further cubic normal form correction
$(W^{hhh \to l}_{[3],1},\tQ^{hhh \to l}_{[3],1}) \in (\B^3_0(W,Q), \C^3_0(W,Q))$,  which
removes these terms.
 These are trilinear forms with the same frequency localization as $\tK^{hhh \to l}_{2}$.
More precisely, we have
\begin{equation}
\left\{
\begin{aligned}
 W^{hhh \to l}_{[3],1} = & \ L^{hhh\to l}_1(Q,Q,W) + L^{hhh\to l}_2(W,W,W)
 \\
 Q^{hhh \to l}_{[3],1} = & \  L^{hhh\to l}_2(Q,W,W)  + L^{hhh\to l}_1(Q,Q,Q).
\end{aligned} 
\right.
\end{equation}
Introducing  these cubic terms removes the cubic error $\tK^{hhh \to l}_{2}$, but
yields further quartic errors. However, these contain at most one $W$ factor, 
and thus can be included in $\tK_3$.

\bigskip

{\bf The term $\tK_3$.} This can be easily estimated in $L^2$ using
Proposition~\ref{tri-est}, which suffices at high frequency $(\geq 1)$. Hence
it remains to consider the low frequencies.  The worst terms are those
with one undifferentiated $W$, e.g., of the form
\[
L_0(W, Q_\alpha, Q_\alpha, W_\alpha).
\]
where $L_0$ is a composition of a trilinear form as in \eqref{tri-est} with 
a straight multiplication.
But for such a term we can use twice the Sobolev embedding to bound it\footnote{
As estimated here, this bound is borderline; however we can
  gain an additional  small time decay factor by restricting the frequencies in $hhh\to
  l$ interactions to $\lambda_{med} \geq \sqrt{\lambda_{out}
    \lambda_{hi}}$.}, using Proposition~\ref{bi-est}, e.g., as in  
\[
\begin{split}
\|L_0(W, Q_\alpha, Q_\alpha, W_\alpha)\|_{\dot H^{\sigma -\frac12}} \lesssim & \
\|L_0(W, Q_\alpha, Q_\alpha, W_\alpha)\|_{L^p} 
\lesssim \| \tW\|_{L^q}  \| \tQ_\alpha\|_{L^\infty}^2\|\tW_\alpha\|_{L^2}
\\
\lesssim & \ t^{-1} \|(\tW,\tQ)\|^2_{X}\|\tW\|_{\dot H^\sigma} \| \tQ_\alpha\|_{L^2} ,
\end{split}
\]
where 
\[
\frac1p-\frac12 = \frac1q = \frac12 - \sigma .
\] 

 \bigskip

Summarizing, we set $W_{[3]} = W^{lhh\to h}_{[3],0}+ W^{hhh \to l}_{[3],1}$ and $Q_{[3]} = Q^{lhh\to l}_{[3],0}+ Q^{hhh\to l}_{[3],1}$,
and define our final normal form variables 
\begin{equation}\label{nft-tri}
\left\{
\begin{aligned}
 \ttW_t = & W + W_{[2]}+ W_{[3]} 
 \\
 \ttQ_t = & Q + Q_{[2]}+ Q_{[3]} .
\end{aligned} 
\right.
\end{equation}  
 We obtain a good system for
$(\ttW, \ttQ)$:
\begin{equation}\label{nf-final}
\left\{
\begin{aligned}
 \tilde\tW_t + \tilde \tQ_\alpha = &\ \tilde\tG
 \\
 \tilde\tQ_t +i \tilde \tW_{\alpha\alpha} = & \ \tilde \tK,
\end{aligned} 
\right.
\end{equation} 
where $(\tilde\tG,\tilde\tK)$ admit a favorable estimate \eqref{tt-gk}. For later use, we list their cubic parts  $(\tilde\tG^{(3)},\tilde\tK^{(3)})$:
\begin{equation} \label{ttgk-3}
\left\{
\begin{aligned}
\tilde\tG = & \ \tG_0^{good} + \tG^1 + \frac12 P^{lhh\to h} ( \Re Q_\alpha W_{[2],\alpha}- \Re W_\alpha Q_{[2],\alpha})
\\
\tilde \tK = & \ \tK_0^{good} + \tK_1 + \tK^{good}_2 + \frac12 P^{lhh\to h} ( \Re Q_\alpha Q_{[2],\alpha}- i \Re W_{\alpha\alpha} W_{[2],\alpha}).
\end{aligned}
\right.
\end{equation}
Finally, the bound \eqref{tt-gk4} for quartic and higher terms is an
easy consequence of Proposition~\ref{bi-est}. Proposition~\ref{tri-est} is also needed for 
the quartic terms generated by $(W^{hhh \to l}_{[3],1},\tQ^{hhh \to l}_{[3],1})$.
Here it helps that no negative Sobolev norms need to be estimated.  This concludes the proof
of part (ii) of the proposition.

It remains to consider the energy equivalence in part (i). Here we
need to bound the corrections $(W_{[2]},Q_{[2]})$ and
$(W_{[3]},Q_{[3]})$ in $\dH^\sigma$.  For this we use standard
multiplicative estimates as in Proposition~\ref{bi-est}, combined with
Sobolev embeddings. These estimates are somewhat tedious but routine,
and are left for the reader.
\end{proof}
\bigskip


\section{The linearized equation}
\label{s:lin}
Here we derive the linearized equation, and prove energy estimates
for it, first in $\dH^1$ and then in $\dH^\sigma$.

\subsection{ The equations.}
The solutions for the linearized water wave equation around a solution
$\left( W,Q\right) $ to the equation \eqref{ngst} are denoted by
$(w,q)$.  As in \cite{HIT}, it will be convenient to
also switch to diagonal variables $(w,r)$, where
\[
r := q - Rw.
\]
We will employ the variables $(w,r)$ at high frequency, but we will return 
to $(w,q)$ for low frequency bounds. 

\medskip

The linearization of $R$ is
\[
\delta R =
\dfrac{q_{\alpha}- Rw_{\alpha}}{1+\W}
= \dfrac{r_{\alpha}+ R_\alpha w}{1+\W},
\]
while the linearization of $F$ can be expressed in the form
\[
\delta F = P[ m - \bar m],
\]
where the auxiliary variable $m$ corresponds to differentiating
$F$ with respect to the holomorphic variables,
\[
m := \frac{q_\alpha - R w_\alpha}{J} + \frac{\bar R w_\alpha}{(1+\W)^2} =
 \frac{r_\alpha +R_\alpha w}{J} + \frac{\bar R w_\alpha}{(1+\W)^2}.
\]
We denote also
\[
n := \bar R \delta R =\dfrac{\bar R(q_{\alpha}- Rw_{\alpha})}{1+\W}
=  \frac{ \bar R(r_{\alpha}+R_\alpha w)}{1+\W}.
\]
Further, introducing the real coefficient $c$ by 
\[
i c :=  \frac{\W_\alpha}{J^\frac12(1+\W)}
- \frac{\bar \W_\alpha}{J^\frac12(1+\bar \W)},
\]
we have 
\[
i \delta c = p - \bar p,
\]
where
\[
p:=\frac{w_{\alpha \alpha}}{J^{1/2}(1+\W)} -\left( \frac{3\W_{ \alpha}}{2J^{1/2}(1+\W)^2}- \frac{\bar{\W}_{\alpha}}{2J^{3/2}}\right) w_{\alpha}.
\]

Then the linearized water wave equations take the form
\begin{equation}\label{lin(wq)}
\left\{
\begin{aligned}
&w_{t}+ F w_\alpha + (1+ \W) P[ m-\bar m] = 0, \\
&q_{t}+ F q_\alpha + Q_\alpha P[m-\bar m]   +P\left[n+\bar n\right] +i P\left[ p-\bar{p}\right] =0.
\end{aligned}
\right.
\end{equation}
Recalling that $b = F + \dfrac{\bar R}{1+\W}$ and replacing the $q$-equation with the 
corresponding $r$-equation, this becomes (see also the similar computation in \cite{HIT}):
\begin{equation}\label{lin(wr)hom}
\left\{
\begin{aligned}
& (\partial_t + b \partial_\alpha) w  +  \frac{1}{1+\bar \W} r_\alpha
+  \frac{R_{\alpha} }{1+\bar \W} w  = \mathcal{G}_0(w,r)
 \\
&(\partial_t + b \partial_\alpha)  r  - i  \frac{a}{1+\W} w+i P p  + 
 \frac{w}{1+\W}\partial_{\alpha}P c    = \mathcal{K}(w,r),
\end{aligned}
\right.
\end{equation}
where
\begin{equation*}
\left\{
\begin{aligned}
\mathcal{G}_0(w,r) &= \ (1+\W) (P \bar m + \bar P  m) \quad \\
\mathcal{K}(w,r) &=  \  \bar P n - P \bar n +i  P\bar p.
\end{aligned}
\right.
\end{equation*}
To rewrite this in a more useful form we introduce the operator $L$ given by
\[
\begin{split}
  L = & \ \partial_\alpha J^{-\frac12} \partial_\alpha -
  i c\partial_\alpha - i P c_\alpha
  \\
  = & \ \partial_\alpha J^{-\frac12} \partial_\alpha -
  i (c\partial_\alpha+\frac12 c_\alpha) - \frac{i}2 (P c_\alpha- \bar P c_\alpha).
\end{split}
\]
Both $c$ and the last coefficient $i(P c_\alpha- \bar P c_\alpha)$ are
real,  so $L$ is self-adjoint. We further note the quadratic expansion
of $L$, namely
\begin{equation}
L  \approx \partial_\alpha (1 - \Re \W) \partial_\alpha -
  i (2\Im \W_\alpha \partial_\alpha+ \Im \W_{\alpha\alpha}) - \Re \W_{\alpha \alpha} .
\end{equation}
Then our system takes the shorter form
\begin{equation}\label{lin(wr)hom1}
\left\{
\begin{aligned}
& (\partial_t + b \partial_\alpha) w  +  \frac{1}{1+\bar \W} r_\alpha
+   \frac{R_{\alpha} }{1+\bar \W} w  = \mathcal{G}_0(w,r)
 \\
&(\partial_t + b \partial_\alpha)  r  - i  \frac{a}{1+\W} w+i P\left[ \frac{Lw}{1+\W}\right]
  = \mathcal{K}(w,r).
\end{aligned}
\right.
\end{equation}
It is interesting to also observe that after a seemingly innocuous cubic change in 
$\mathcal{G}_0$ we can also replace $\dfrac{R_\alpha}{1+\bar \W}$ by $Pb_\alpha$.
Then our system is rewritten as 
\begin{equation}\label{lin(wr)hom2}
\left\{
\begin{aligned}
& (\partial_t + b \partial_\alpha) w  +  \frac{1}{1+\bar \W} r_\alpha
+  Pb\,  w  = \mathcal{G}(w,r)
 \\
&(\partial_t + b \partial_\alpha)  r  - i  \frac{a}{1+\W} w+i P\left[ \frac{Lw}{1+\W}\right]
  = \mathcal{K}(w,r),
\end{aligned}
\right.
\end{equation}
where the modified function $\mathcal{G}$ is given by 
\begin{equation*}
\mathcal{G} = (1+\W) (P \bar m + \bar P  m) + w(\bar P[ R_\alpha \bar Y] - P[R \bar Y_\alpha]).
\end{equation*}
Projecting \eqref{lin(wr)hom} onto the space of functions with the
spectrum on the negative side of the real line, we obtain
\begin{equation}\label{lin(wr)proj}
\left\{
\begin{aligned}
& (\partial_t + \M_b \partial_\alpha) w  + P \left[ \frac{1}{1+\bar \W} r_\alpha\right]
+    Pb_\alpha \,  w  = G
 \\
&(\partial_t + \M_b \partial_\alpha)  r  - i P\left[ \frac{a}{1+\W} w\right] +i P\left[ \frac{Lw}{1+\W} \right]= K,
\end{aligned}
\right.
\end{equation}
where
\begin{equation*}
\begin{aligned}
G(w,r):=P\mathcal{G}, \quad  K(w,r):=P\mathcal{K}.
\end{aligned}
\end{equation*}
The perturbative terms $G$ and $K$ are split into quadratic and higher
terms as shown below
\[
\begin{split}
   G = &  \,  G^{(2)}+  G^{(3+)},\ \ \   K = \  K^{(2)}+  K^{(3+)}.
\end{split}
\]
For the quadratic parts we have
\begin{equation*}
\begin{aligned}
G^{(2)}(w,r)=P[ R \bar{w}_{\alpha}- \W\bar{r}_{\alpha}],  \qquad  K^{(2)}(w,r)=- P [R \bar r_\alpha] - i  \frac12 P[\bar w_{\alpha\alpha} \W_\alpha - \bar w_\alpha \W].
\end{aligned}
\end{equation*}
The quartic and higher terms in $(G,K)$ are purely perturbative 
in our arguments, so we only need to consider some of the cubic terms.
In the $w$ equation, only the $w$ terms are interesting,
so we can set 
\[
G^{(3)} =  - P[ \W \bar R_\alpha \bar w] - P\left[ (P[R\bar \W_\alpha]-\bar P[R_\alpha \W] w\right] .
\]
In the $r$ equation, all undifferentiated cubic terms are relevant; there we set
\[
K^{(3)} = P[ R (\bar \W \bar r_\alpha- \bar R_\alpha \bar w) ] .
\]

Our main result for the linearized equation asserts that it is
well-posed in $\dH^1 \cap \dH^\sigma$:

\begin{theorem}
  Suppose that the solution $(W,Q) \in C(0,T; \dH^4 \cap \dH^\sigma)$ to
  the water wave equation is well defined in a time interval $[0,T]$
and satisfies the bounds
\[
\| (W,Q)(t)\|_{\dH^3 \cap \dH^\sigma} \leq \epsilon t^\delta, \qquad 
\| (W,Q)(t)\|_{X} \leq \epsilon t^{-\frac12}
\]
with some fixed $\delta \ll 1$, and $\epsilon \ll \delta$.
Then the solution $(w,q)$ to the linearized equation \eqref{lin(wq)} satisfies
the bounds 
\begin{equation}
\| (w,q)(t)\|_{\dH^1 \cap \dH^\sigma} \lesssim  t^{C\epsilon^2} \| (w,q)(0)\|_{\dH^1 \cap \dH^\sigma}.
\end{equation}
\end{theorem}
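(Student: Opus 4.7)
The plan is to decompose the norm as $\dH^1 \cap \dH^\sigma$ and prove separate energy estimates in each piece, mirroring the two-tiered strategy already used for the full system in Sections~\ref{s:high-en} and \ref{s:low-en}. In each case I aim for a bound of the form
\[
\frac{d}{dt}\, E(w,q) \,\lesssim\, \|(W,Q)\|_X^{2}\, E(w,q) \,\lesssim\, \epsilon^{2} t^{-1}\, E(w,q),
\]
so that Gronwall produces the desired factor $t^{C\epsilon^{2}}$.

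For the $\dH^{1}$ estimate I would work with the diagonalized system \eqref{lin(wr)hom2} for $(w,r)$, where $r = q-Rw$, and construct a \emph{quasilinear modified linear energy} $\Elind(w,r)$ equivalent to $\|w\|_{L^2}^2 + \|r\|_{\dot H^{1/2}}^2$. The construction is the linearized analogue of the one in Proposition~\ref{t:en=small}: start from the quadratic energy $\int |w|^2 + \Im(\bar r r_\alpha)\, d\alpha$, then add bilinear corrections coming from the linearization of the normal form in Proposition~\ref{p:nf} applied to $((W,Q),(w,q))$, and finally quasilinearize the top-order coefficients by inserting powers of $J^{-1/2}$ and $(1+\W)$ factors, exactly as for $E_{1}$ in the proof of Proposition~\ref{t:en=small}. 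The time derivative is computed using \eqref{diff-ngst} for $(\W_t,R_t)$ together with the linearized equation \eqref{lin(wr)hom2}; the cubic terms at the top of the $\dH^1$ scale must cancel by the same mechanism as the $Er_1+Er_2$ cancellation in Section~\ref{s:high-en}, after which the remaining contributions are controlled by $\|(W,Q)\|_X^2$ times the linear energy via Sobolev and H\"older.

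For the $\dH^{\sigma}$ estimate I would instead work in the undiagonalized form \eqref{lin(wq)} and linearize the normal form construction of Proposition~\ref{p:low-en}. Concretely, I define bilinear corrections $(w_{[2]},q_{[2]})$ and trilinear corrections $(w_{[3]},q_{[3]})$, each \emph{linear} in $(w,q)$ and polynomial in $(W,Q)$, so that the new variables
\[
\ttw := w + w_{[2]} + w_{[3]}, \qquad \ttq := q + q_{[2]} + q_{[3]}
\]
solve a cubic linear system of the form \eqref{ww-nft-tt}, with right-hand side bounded in $\dH^\sigma$ by $\|(W,Q)\|_X^2\,\|(w,q)\|_{\dH^\sigma \cap \dH^2}$. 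The bilinear corrections are obtained from the symbols $A^h,B^h,\ldots,D^a$ of Proposition~\ref{p:nf} by freezing one slot in $(W,Q)$ and the other in $(w,q)$; the trilinear corrections are exactly the ones provided by Proposition~\ref{p:nf-cubic}, and they are used to remove the two families of non-resonant cubic interactions identified in the proof of Proposition~\ref{p:low-en}, namely the $lhh\to h$ terms where the undifferentiated variable has the lowest frequency and the $hhh\to l$ balanced-frequency interactions that fail to close in negative Sobolev norms. The high-frequency $\dH^2$ piece that appears on the right-hand side is controlled by the already-established $\dH^1$ bound combined with the hypothesis $\|(W,Q)\|_{\dH^3} \lesssim \epsilon t^\delta$.

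The main obstacle is the third bullet in the list of guiding principles spelled out in the proof of Proposition~\ref{p:low-en}: for the linearized variables we have only $L^2$-type bounds and no decaying $L^\infty$ estimate, so in every trilinear or quadrilinear expression the $L^2$ factor is allowed to be either $(W,Q)$ or $(w,q)$. This forces us to check that the \emph{same} normal form corrections used in Proposition~\ref{p:low-en} still provide integrable-in-time error terms when the role of a decaying factor is played by a $(w,q)$ factor controlled only in $\dH^\sigma$; here the $\sigma$-loss Sobolev embedding $\dot H^\sigma \hookrightarrow L^{2/(1-2\sigma)}$ is exactly enough to absorb two $L^\infty$ decaying factors from $(W,Q)$ and yield the bound $t^{-1}$. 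Once this multilinear bookkeeping is in place, combining the two modified energies gives a differential inequality $\frac{d}{dt}\|(w,q)\|_{\dH^1 \cap \dH^\sigma}^2 \lesssim \epsilon^2 t^{-1} \|(w,q)\|_{\dH^1 \cap \dH^\sigma}^2$, and Gronwall closes the proof.
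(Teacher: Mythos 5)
Your proposal follows essentially the same two-tier strategy as the paper's Propositions~\ref{p:lin-hi} and~\ref{p:lin-low}: a quasilinear modified energy in the diagonal variables $(w,r)$ for $\dH^1$, and a linearized normal form correction in $(w,q)$ for $\dH^\sigma$, with the same identification of the key difficulty (only $L^2$-type control on the linearized variable, hence the Sobolev embedding $\dot H^\sigma \hookrightarrow L^{2/(1-2\sigma)}$ to absorb two decaying $L^\infty$ factors). One slip worth flagging: the $\dH^1$-level starting energy is not $\int |w|^2 + \Im(\bar r r_\alpha)\,d\alpha$ but rather $\Elind(w,r) = \int -\Re(Lw\cdot\bar w) + \Im(r\,\partial_\alpha \bar r)\,d\alpha$, which after integrating by parts is comparable to $\|w\|_{\dot H^1}^2 + \|r\|_{\dot H^{1/2}}^2$; the operator $L$ (and hence the extra derivative on $w$) is forced both by the target regularity $\dH^1 = \dot H^1 \times \dot H^{1/2}$ and, more importantly, by the need for the mixed $r_\alpha w$ terms to cancel when $\frac{d}{dt}$ is applied to the $r$-equation (which carries $iP[Lw/(1+\W)]$) and the $w$-equation simultaneously.
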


We remark that one particular solution for the linearized equation is
the scaling derivative $(w,q) = S(W,Q)$.  Then, given the $ S(W,Q)$ part
of the initial data bound \eqref{energy}, we can conclude that
we have
\begin{equation}\label{energy-s-lin}
\|S(W,Q)\|_{\dH^1 \cap \dH^\sigma} \lesssim  \epsilon t^{C\epsilon^2} ,
\end{equation}
which is part of our bootstrap argument in the proof of Theorem~\ref{t:almost}.

Our proof of the theorem requires two steps. First we consider the
high frequencies, for which we use the equation \eqref{lin(wr)proj}.
In order to estimate  the $\dH^1$ size of the solution $(w,r)$ we
produce a quasilinear cubic $\dH^1$ energy, expressed in terms 
of the diagonal variables $(w,r)$, but whose evolution is governed
by the full $\dH^1 \cap \dH^\sigma$ size of $(w,q)$. The outcome of this analysis
is contained in Proposition~\ref{p:lin-hi} below.

It remains to control the low frequencies, which is done using the
equation \eqref{lin(wq)}, expressed in terms of the original $(w,q)$
variables.  For these we instead use directly the normal form method,
with some added cubic corrections; this calculation is carried at the
level of the original variables $(w,q)$. The result is summarized in 
Proposition~\ref{p:lin-low} below.

Finally, the conclusion of the Theorem is obtained by we putting  together
the two results in Proposition~\ref{p:lin-hi}, respectively Proposition~\ref{p:lin-low}.
Unlike the case of gravity waves, the transition between the two sets of variables
$(w,r)$ and $(w,q)$ is harmless, due to the estimate
\begin{equation}
\| R w\|_{\dot H^\frac12 \cap \dot H^{\sigma-\frac12}} \lesssim \epsilon 
\|w\|_{\dot H^1 \cap \dot H^\sigma} .
\end{equation}

\subsection{\protect{$\dH^1$}  energy estimates.}
The goal of this section is to produce a cubic $\dH^1$ energy functional 
for the linearized equation. Precisely, we have

\begin{proposition}\label{p:lin-hi}
There exists an energy functional ${\bf E}^1(w,r)$ with the following two properties:

(i) Energy equivalence:
\begin{equation}\label{lin-eqe1}
{\bf E}^1(w,r) = (1+O(\epsilon)) \|(w,q)\|_{\dH^1}^2 + O(\epsilon) \|(w,q)\|_{\dH^\sigma}^2 ,
\end{equation}

(ii) Cubic energy estimates:
\begin{equation}\label{lin-dte1}
\frac{d}{dt} {\bf E}^1(w,r) \lesssim  \|(W,Q)\|_{X}^2 (\|(w,q)\|_{\dH^1}^2 + \|(w,q)\|_{\dH^\sigma}^2).
\end{equation}
\end{proposition}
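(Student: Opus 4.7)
The plan is to adapt the quasilinear modified energy construction from Proposition~\ref{t:en=small} to the linearized system \eqref{lin(wr)proj} in the diagonal variables $(w,r)$. The starting observation is that the self-adjoint operator $L$ appearing in the $r$-equation encodes precisely the top-order quasilinear surface tension structure, so it is a natural building block for a quasilinear $\dH^1$ energy. A first candidate is
\[
{\bf E}^1_{hi}(w,r) := \int J^{-1/2} |r|^2 + \Re(\bar w \, L w) \, d\alpha,
\]
where the $J^{-1/2}$ weight is tailored to the $(1+\bar W)^{-1}$ and $(1+W)^{-1}$ factors in \eqref{lin(wr)proj}, and where the two terms are chosen so that the leading bilinear contributions in $\frac{d}{dt}{\bf E}^1_{hi}$ coming from $r_t$ and $w_t$ produce matched pairs that cancel, exactly as the pairs $Er_1 + Er_2$ and $Er_3 + Er_4$ cancel in the proof of Proposition~\ref{t:en=small}. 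This eliminates every quasilinear contribution that would otherwise force a loss of half a derivative onto $(W,Q)$.

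Second, I would organize the remaining terms in $\frac{d}{dt}{\bf E}^1_{hi}$ into (i) genuine cubic terms bounded by $\|(W,Q)\|_X^2 \|(w,r)\|_{\dH^1}^2$ using one $L^\infty$ factor from $X$ together with the paraproduct bounds of Proposition~\ref{bi-est}, and (ii) quadratic remainders of two flavours: those produced by linearizing the transport symbol $b$ and the coefficients $\frac{R_\alpha}{1+\bar W}$, $\frac{a}{1+W}$, and those produced by the quadratic source terms $G^{(2)}, K^{(2)}$. Each such remainder is non-resonant thanks to the elliptic lower bound $9(\xi+\eta)^2 - 4\xi\eta$ on the bilinear frequency expression discussed at the start of Section~3, so I would remove it by adding a bilinear correction
\[
{\bf E}^1_{nf}(w,r;W,Q) = \sum L_k(w,\bar w, \cdot) + \sum L_k(r,\bar r, \cdot) + \sum L_k(w,\bar r, \cdot),
\]
whose symbols are determined exactly as in Proposition~\ref{p:nf}, now applied to the mixed pair consisting of the full $(W,Q)$-equations and the linearized $(w,r)$-equations. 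The final energy is then ${\bf E}^1 := {\bf E}^1_{hi} + {\bf E}^1_{nf}$.

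Finally, I would verify the energy equivalence \eqref{lin-eqe1} and the cubic bound \eqref{lin-dte1}. The equivalence between the $r$- and $q$-parts of the energy follows from $q = r + Rw$ together with the paraproduct estimate $\|Rw\|_{\dot H^{1/2} \cap \dot H^{\sigma - 1/2}} \lesssim \epsilon \|w\|_{\dot H^1 \cap \dot H^\sigma}$; the quasilinear modifications in ${\bf E}^1_{hi}$ contribute $(1 + O(\epsilon))$-factors via $L^\infty$ control of $J - 1$ and of the coefficients of $L$ by $\|(W,Q)\|_X$; and ${\bf E}^1_{nf}$ is bilinear in $(W,Q)$ and in $(w,r)$, hence $O(\epsilon)$ times the combined $\dH^1 \cap \dH^\sigma$ energy. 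The cubic bound follows by combining the cancellations built into ${\bf E}^1_{hi}$ with the normal form cancellation of quadratic remainders, using $\|(W,Q)\|_X$ to absorb every $L^\infty$-factor on $(W,Q)$ or its derivatives.

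The main obstacle is the low-frequency bookkeeping in the normal form step. Just as in Proposition~\ref{p:low-en}, some of the quadratic remainders in $\frac{d}{dt}{\bf E}^1_{hi}$ are of $lhh \to h$ type with an undifferentiated low-frequency factor of $W$ or $w$ paired with two higher-frequency differentiated factors; such contributions cannot be controlled by $\|(W,Q)\|_X$ and $\|(w,q)\|_{\dH^1}$ alone, and are the source of the unavoidable $O(\epsilon) \|(w,q)\|_{\dH^\sigma}^2$ defect present on both sides of \eqref{lin-eqe1} and \eqref{lin-dte1}. Ensuring that this defect can be absorbed consistently, and that the construction never appeals to a Sobolev norm of $(W,Q)$ beyond the scale-invariant control norms (so that the bootstrap in the proof of Theorem~\ref{t:almost} remains closed), is the technical core of the argument.
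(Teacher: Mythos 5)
Your broad strategy---start from a quasilinear energy built around the self-adjoint operator $L$, then add normal-form corrections to kill the non-resonant quadratic and cubic remainders---is indeed the strategy the paper uses. However, your candidate base energy is not correct, and this defect is not cosmetic.

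You propose
\[
{\bf E}^1_{hi}(w,r) = \int J^{-1/2}|r|^2 + \Re(\bar w\, L w)\,d\alpha .
\]
Since $L = \partial_\alpha J^{-1/2}\partial_\alpha + \text{l.o.t.}$ is negative to leading order, $\Re(\bar w L w) \approx -\|w\|_{\dot H^1}^2$, and $\int J^{-1/2}|r|^2$ is an $L^2$ quantity; so your expression is indefinite and sits at the wrong regularity level for $r$. Recall $\dH^1 = \dot H^1 \times \dot H^{1/2}$, so the $r$-part of the energy must be $\dot H^{1/2}$. The paper takes
\[
E^2_{lin}(w,r)= \int -\Re(Lw \cdot \bar w) + \Im(r\,\partial_\alpha \bar r)\,d\alpha ,
\]
and for holomorphic $r$ the second integral is exactly $\|r\|_{\dot H^{1/2}}^2$. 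This is not just a matter of getting the right equivalence: the quasilinear cancellation you want depends crucially on this choice. Differentiating $\Im(r\bar r_\alpha)$ in time and using the $r$-equation produces $\int Lw \cdot \frac{\bar r_\alpha}{1+\W}$, while differentiating $-\Re(Lw\cdot\bar w)$ and using $w_t \approx -(1+\bar \W)^{-1}r_\alpha$ produces the exact cancelling term $2\Re\int Lw\cdot\frac{\bar r_\alpha}{1+\W}$. With an $L^2$-level $r$-energy the derivative counts do not line up and those top-order cross-terms do not cancel, so a half-derivative is lost.

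Two further ingredients of the paper's argument are absent from your proposal. First, the bilinear correction needed to remove the mixed cubic error $Er_2 = 3\int \Re R_\alpha\,|w_\alpha|^2\,d\alpha$ cannot be added naively: its time derivative produces unbounded quartic terms of the form $w_\alpha\bar w_{\alpha\alpha}$ and $r_\alpha\bar r_\alpha$, and the paper therefore splits $C_2$ into a leading differential part $C_{2,high}$ and a tail $C_{2,low}$ and replaces $C_{2,high}$ by a quasilinear variant $\tilde C_{2,high}$ with $J^{-1/2}$ and $(1+\W)$ weights; this is exactly the same phenomenon as the $E_1,E_2,E_3$ corrections in Proposition~\ref{t:en=small}, and your ${\bf E}^1_{nf}$ as stated would not produce a bounded time derivative. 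Second, the quartic error $Er_4$ with a single undifferentiated $w$ factor cannot be removed by the bilinear normal-form symbols of Proposition~\ref{p:nf}; it requires the trilinear, frequency-localized ($hhh\to l$) correction supplied by Proposition~\ref{p:nf-cubic}. Without both of these the cubic estimate \eqref{lin-dte1} does not close.
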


We remark that the above estimate is not self-contained, and it can be
used only coupled with a similar $\dH^\sigma$ result. This issue is on
one hand due to our definition of $r$, which involves the
undifferentiated variable $w$.  One could avoid this by redefining $r$
as $r = q - T_R w$. However, this would make our equations and
analysis more complicated, and would serve little purpose in the
present paper, where the $\dH^\sigma$ result is the primary
one. Secondly, in order to define the normal form energy corrections
we have to use the undifferentiated $w$ regardless. So a lower norm
seems necessary for the cubic estimates in any case.

\begin{proof}

Naively we start with the linear energy $\| (w,r)\|_{\dH^1}^2$ and construct a suitable 
modification of it.There are three main difficulties in this process:

a) The equations are quasilinear, so a quasilinear correction is needed in order 
to account for the high frequencies.

b) The linear energy is not cubic, so a normal form type cubic correction is needed
in order to eliminate cubic errors.

c) The $L^2$ norm of $w$ is not controlled, so quartic error terms containing undifferentiated
$w$ cannot be controlled directly. A further quartic normal form correction to the 
energy needs to be used for such terms.

We will pursue these steps in the above order. 

\bigskip

a) The natural quasilinear energy functional associated to the above system is obtained 
 if we multiply the second equation by $i r_\alpha$ and the first by $Lw$.
These multipliers are matched so that the mixed $r_\alpha w$ terms cancel in the 
computation. Thus it is natural to define
 \begin{equation}
\label{first-e}
E^2_{lin}(w,r)= \int_{\mathbb{R}}   -   \Re (Lw \cdot \bar w) +
\Im (  r \partial_\alpha \bar r)
 \, d\alpha.
\end{equation}
Now we  compute the time derivative of the above energy.
We use the second equation to write
\[
\begin{split}
\frac{d}{dt} \int \Im (  r \partial_\alpha \bar r)  \, d \alpha = & \ 2 \Im  \int    (\partial_t
+ \M_b \partial_\alpha) r  \, \partial_\alpha \bar r \, d \alpha 
=  2 \Re \int ( i a w -  Lw)  \, \frac{ \bar r_\alpha}{1+\W}\, - i \mathcal K \bar  r_\alpha  d \alpha .
\end{split}
\]
To compute in the first equation we introduce the skew-adjoint operator
\[
N := \partial_t + b \partial_\alpha + Pb.
\]
Then we can write
\[
\frac{d}{dt} \int_{\mathbb{R}}    \Re (Lw \cdot \bar w)
 \, d\alpha =  \Re \int  - 2 Lw  \, \frac{ \bar r_\alpha}{1+\W}\, + 2 \bar {\mathcal G}   Lw 
 + [N,  L] w \cdot \bar w \, d\alpha.
\]
Summing up, the mixed terms cancel, and we obtain
\begin{equation}\label{wr-en}
\frac{d}{dt}E^2_{lin}(w,r)= 2 \Re \int  i  \bar {\mathcal K}  r_\alpha -  \bar {\mathcal G}  L w \,
 d \alpha + \int  [N,  L] w \cdot \bar w \, d\alpha + 
\int  \frac{a w  \bar r_\alpha}{1+\W} \, d\alpha .
\end{equation}
We now successively consider the three integrals above, which are
denoted by $I_1$, $I_2$ and $I_3$. Our goal will be to peel  off good 
terms, retaining a leading part which will be rectified with further energy corrections.
Here, by  good terms we mean terms which can be estimated by the right hand side 
in \eqref{lin-dte1}.
\bigskip

{\bf The integral $I_1$.} We begin with the $\mathcal K$ term. Since
$r_\alpha$ is holomorphic, we can freely replace $\mathcal K$ with
$K$, and then drop the projections. The only terms which cannot be
readily estimated as in the proposition are the cubic terms. Thus for
the corresponding part $I_1(\mathcal K)$ of $I_1$ we obtain
 \[
I_1(\mathcal K) =  2 \Re \int      i  r_\alpha( - \bar R q_\alpha+ i  \frac12 
\bar  \W  w_{\alpha \alpha}
- i  \frac12 \bar \W_\alpha w_\alpha)  \, d\alpha + good.
\]

Next we consider the $\mathcal G$ term. Integrating by parts we rewrite it as
\[
I_1(\mathcal G) = \int J^{-\frac12}  \bar {\mathcal G}_\alpha \cdot w_\alpha
+  \bar {\mathcal G} (  i (c\partial_\alpha+\frac12 c_\alpha) + \frac{i}2 (P c_\alpha- \bar P c_\alpha)\,
d\alpha .
\]
Here we recall that 
\[
\mathcal G = (1+\W) (P \bar m + \bar P  m) + w(\bar P[ R_\alpha \bar Y] - P[R \bar Y_\alpha]).
\]
For the first component of $\mathcal G$ we use the expression of $m$
in terms of $(w,q)$. Thus $\mathcal G_\alpha$ will contain higher
derivatives of $w$ and $q$. But these are either $\bar
w_{\alpha\alpha}$, $\bar q_{\alpha\alpha}$ and $\bar q_\alpha$ inside
a $P$ projector, or $ w_{\alpha\alpha}$, $ q_{\alpha\alpha}$ and $
q_\alpha$ inside a $\bar P$ projector. Thus the extra derivatives can
be harmlessly shifted to their co-factor in our estimates, see
e.g., Lemma 2.1, Appendix B in \cite{HIT}. At the other end, however,
we need to be more careful with the terms which contain
undifferentiated $w$, as we only control $w$ in $\dot H^\sigma$ and
not in $L^2$; for such terms we will use the Sobolev embedding $\dot
H^\sigma \subset L^{p}$ and regain the lost integrability from one of
the other factors.

 To summarize, in $I_1(\mathcal G)$ we can control all quintic and higher terms, and also 
 all quartic terms which do not contain undifferentiated $w$. 
Thus we retain the  cubic part of $I_1(\mathcal G)$, which is the
integral of $ m \cdot w_{\alpha\alpha}$, as well as the quartic terms with a $w$ factor. Expanding these terms, we have   
\[
\begin{split}
I_1(\mathcal G) = & \  2 \Re \int  -   w_{\alpha\alpha}(\bar R w_\alpha -\bar \W q_\alpha) 
 -   \bar w_{\alpha\alpha} w (P(R \bar \W_\alpha) - \bar P(R_\alpha \bar \W)) \, d \alpha
\\ & + 2 \Re \int  w \bar w_\alpha( \bar P(R_\alpha \W)_\alpha - P(R \bar \W_\alpha)_\alpha)
+  i  w  P c_\alpha \bar P(\bar R w_\alpha - \bar \W q_\alpha) \, d\alpha + good .
\end{split}
\]

\bigskip

{\bf The integral $I_2$.}
Here we need  to compute the above (selfadjoint) commutator. We have 
\[
\begin{split}
[N,L] =  & \  [\partial_t + b \partial_\alpha + Pb_\alpha, \partial_\alpha J^{-\frac12} \partial_\alpha
- ic \partial_\alpha  - iPc_\alpha]
\\ = & \ \partial_\alpha (\partial_t + b \partial_\alpha)J^{-\frac12} \partial_\alpha
- i(\partial_t + b \partial_\alpha) c \partial_\alpha  - i(\partial_t + b \partial_\alpha) Pc_\alpha
\\ & \ - \partial_\alpha J^{-\frac12} (b_\alpha \partial_\alpha+ P b_{\alpha\alpha})
-  ( \partial_\alpha b_\alpha - \bar P b_{\alpha\alpha})J^{-\frac12} \partial_\alpha
 + i c ( b_\alpha \partial_\alpha  + P b_{\alpha\alpha}) 
\\ = & \ \partial_\alpha [ (\partial_t + b \partial_\alpha)J^{-\frac12} - 2 J^{-\frac12} b_\alpha] \partial_\alpha - i ( c_t + bc_\alpha - c b_\alpha +  J^{-\frac12} H b_{\alpha \alpha}) \partial_\alpha
\\  & \ - i ( Pc_{\alpha t} + b P c_{\alpha \alpha} -  c P b_{\alpha \alpha})  -  \partial_\alpha( J^{-\frac12} P b_{\alpha \alpha}).
\end{split}
\]
First of all, we need to compute the linear part of this operator 
using 
\[
c \approx 2 \Im \W_\alpha, \qquad b \approx 2 \Re R, \qquad J^{-\frac12} \approx 1 - \Re \W  .
\]
Then we get 
\begin{equation}\label{[nl]1}
[N,L] \approx - 3 \partial_\alpha \Re R_\alpha \partial_\alpha ,
\end{equation}
therefore we can write
\[
I_2 = \int   3  \Re R_\alpha |w_\alpha|^2  d\alpha + quartic+ higher.
\]
However, as before, we also need to retain the quartic terms with undifferentiated
$w$.  Even worse are the terms with $|w|^2$. Hence for the lower order terms in $[N,L]$
we need a good quadratic expansion. Our complete expansion for $[N,L]$ will be
\begin{equation}\label{nl-exp}
[N,L] =  - 3 \partial_\alpha (\Re R_\alpha+quad)  \partial_\alpha
+ (L_2(\W,R) + cubic) \partial _\alpha + i   \partial_t(\bar R R_\alpha) + \partial_\alpha L_2(\W,R) + cubic.
\end{equation}
The quadratic part of the coefficient of  $\partial_\alpha$ is easily seen to have 
the form $L_2(\W,R)$. It remains to compute the  quadratic terms
in the zero order term of the commutator,
\[
\gamma = i ( Pc_{\alpha t} + b P c_{\alpha \alpha} -  c P b_{\alpha \alpha})  -  \partial_\alpha( J^{-\frac12} P b_{\alpha \alpha}).
\]
Precisely, we claim that we can write it in the form
\[
\gamma =i   \partial_t(\bar R R_\alpha) + \partial_\alpha L_2(W,R) + \gamma_3,
\]
where $\gamma_2=$ stands for a quadratic expression in $\W$, $R$ and their derivatives
while $\gamma_3$ contains only cubic and higher terms in $(\W,R)$. Indeed,  
we have 
\[
c \approx \Im (\W_{\alpha}(2 - 3\W - \bar \W)), \qquad b \approx 2 \Re P[ R(1-\W)] ,
\]  
while
\[
P c_{\alpha t} = - 2 \Im R_{\alpha \alpha \alpha} + \partial_\alpha L_2(\W,R),
\]
therefore
\[
\begin{split}
\gamma  = 
& \  (R+\bar R)  \W_{\alpha\alpha\alpha} -  (\W_\alpha - \bar \W_\alpha) R_{\alpha \alpha} 
+ \partial_\alpha L_2(\W,R) + \gamma_3
\\
= & \ i \partial_t (\bar R R_\alpha) + \bar \W_{\alpha\alpha} R_\alpha + \bar \W_\alpha R_{\alpha \alpha}  + \partial_\alpha  L_2(\W,R) + \gamma_3  
\\= & \ i  \partial_t (\bar R R_\alpha)  + \partial_\alpha  L_2(\W,R) + \gamma_3  ,
\end{split}
\]
which concludes the proof of the expansion \eqref{nl-exp}. Thus, for $I_2$ we obtain 
\[
I_2 = \Re \int  -3 \Re R_\alpha |w_\alpha|^2+     
 i \partial_t (RR_\alpha) |w|^2 +  w \bar w_\alpha  L_2(\W,R) \,  d\alpha + good.
\]

\bigskip

{\bf The integral $I_3$.} Here we simply retain the quartic part,
\[
I_3 = - \Re \int w \bar r_\alpha a  \, d\alpha  .
\]

Summing up the contributions from $I_1$, $I_2$ and $I_3$ and
reorganizing terms,  our energy computation so far reads as
\[
\frac{d}{dt}E^2_{lin}(w,r)= Er_1 + Er_2 + Er_3 + Er_4 + good,
\]
where $Er_1$ contains the holomorphic cubic terms,
\[
\begin{split}
Er_1 =  & \ 2 \Re \int      i  r_\alpha( - \bar R r_\alpha+ i  \frac12 
\bar  \W  w_{\alpha \alpha}
- i  \frac12 \bar \W_\alpha w_\alpha)  
 -   w_{\alpha\alpha}(\bar R w_\alpha -\bar \W r_\alpha) \, d\alpha
\\
= & \ 2\Re \int   - i  q_\alpha \bar R q_\alpha
+  \frac{1}2 
q_\alpha \bar  \W  w_{\alpha \alpha}
+   \frac{1}2 q_\alpha \bar \W_\alpha w_\alpha -  \sigma w_{\alpha\alpha} \bar R w_\alpha  \, d\alpha ,
\end{split}
\]
$Er_2$ contains the mixed cubic terms,
\[
Er_2 = \int  3  \Re R_\alpha w_\alpha \bar w_\alpha \, d\alpha ,
\]
$Er_3$ contains the single remaining quartic $|w|^2$ term,
\[
Er_3 =  \Re \int  \partial_t ( i \bar R R_\alpha)|w|^2
\, d \alpha,
\]
and  $Er_4$ contains the quartic terms with a single $w$ factor,
\[
Er_4 = \Re \int   w  w_\alpha L_2(W,R) + w \bar w_\alpha L_2(W,R)
+ w q_\alpha (L_1(\W,\W)+L_0(R,R)) \, d\alpha.
\]

Now we successively correct the energy in order to cancel the four terms above.
\bigskip

{\bf The $Er_1$ term.}
To account for $Er_1$ we introduce a correction of the form
\[
C_1 = 2 \Re \int L^1_1(w, q, \bar R) +  L^2_2(w,w,\bar \W) + L^3_1(q,q,\bar \W)\,
d\alpha,
\]
so that 
\begin{equation}\label{c1-er1}
\frac{d}{dt} C_1 = Er_1 + quartic.
\end{equation}
Here the trililnear forms $L^1_1$ have smooth homogeneous symbols, where the lower
index indicates the order. Thus, the quartic terms on the right in \eqref{c1-er1} may contain
at most one undifferentiated factor, and, if that factor is $w$, then they can be included in 
$Er_4$.

This and the next $Er_2$ part are the counterparts here of the normal
form calculation in Proposition~\ref{p:nf-cubic}, and are a consequence 
of the fact that bilinear interactions are non-resonant in our problem, except 
at frequency zero. Because of the resonance at frequency zero, it is very useful that
all factors in the $Er_1$ and $Er_2$ integrands are differentiated. One could recast this calculation in terms of the outcome of Proposition~\ref{p:nf-cubic},
but it is more efficient to redo it directly.

Denoting the symbols of $L^1_1$, $L^2_2$ and $L^3_1$ by $m_1$, $m_2$, respectively
$m_3$ for the three expressions, from the relation \eqref{c1-er1}  we obtain the system
\begin{equation*}
\left\{
\begin{aligned}
\ [\eta^2 m_1(\xi,\eta)]_{sym}  + (\xi+\eta) m_2(\xi,\eta)  =&  \ \frac12\xi \eta(\xi+\eta)
\\
- (\xi+\eta)^2 m_1(\xi,\eta) - 2\eta m_2(\xi,\eta) + 2\xi^2 m_3(\xi,\eta) =&  \ - \frac12 \xi \eta^2
\\
-[\xi m_1(\xi,\eta)]_{sym}  +  (\xi+\eta) m_3(\xi,\eta) =&  \ - \xi \eta .
\end{aligned}
\right.
\end{equation*}
Expanding the middle equation into a symmetric and an antisymmetric part we obtain
\begin{equation*}
\left\{
\begin{aligned}
\ [\eta^2 m_1(\xi,\eta)]_{sym}  + (\xi+\eta) m_2(\xi,\eta)  = & \ \frac12\xi \eta(\xi+\eta)
\\
- (\xi+\eta)^2 [\eta^2 m_1(\xi,\eta)]_{sym} 
- (\eta^3+\xi^3) m_2(\xi,\eta) + 2\eta^2 \xi^2 m_3(\xi,\eta) = & \ - 
\frac14 \xi \eta(\xi^3+\eta^3)
\\
- (\xi+\eta)^2 [\xi m_1(\xi,\eta)]_{sym} - 2\xi \eta m_2(\xi,\eta) + (\xi^3+\eta^3) m_3(\xi,\eta)
 = & \ - \frac12 \xi^2 \eta^2
\\
-[\xi m_1(\xi,\eta)]_{sym}  +  (\xi+\eta) m_3(\xi,\eta) = & \ - \xi \eta .
\end{aligned}
\right.
\end{equation*}
The matrix of this system is equivalent to the matrix in Proposition~\ref{p:nf-cubic}.
A short computation leads to
\begin{equation*}
\left\{
\begin{aligned}
m_2 =   \frac14(3\xi^2+ 10 \xi \eta + 3\eta^2)  \frac{\xi^2+ 3\xi \eta +\eta^2}{9 \xi^2 + 14 \xi \eta + 9 \eta^2},& \qquad m_3 =  -2 (\xi+\eta)    \frac{\xi^2+ 3\xi \eta +\eta^2}{9 \xi^2 + 14 \xi \eta + 9 \eta^2},\\
[\eta^2 m_1(\xi,\eta)]_{sym} = \frac12(\xi+\eta)(\xi\eta-m_2),& \qquad [\xi m_1(\xi,\eta)]_{sym} = \xi \eta +(\xi+\eta)m_3 .
\end{aligned}
\right.
\end{equation*}
Solving for $m_1$ from the last two relations yields
\[
m_1 = \frac{1}{\eta^3-\xi^3}( \xi \eta(\eta-\xi)(\eta+2\xi)  - (\xi+\eta) (\eta m_2+\xi^2 m_3)).
\]
The last factor vanishes when $\xi= \eta$, and we obtain
\[
m_1 = \frac{1}{\eta^2-\xi \eta +\xi^2}\left( \xi \eta (\eta+2\xi)  
- \frac{(\xi+\eta)(8\xi^2+13\xi\eta-3\eta^2) (\xi^2+ 3\xi \eta +\eta^2)}{4(9 \xi^2 + 14 \xi \eta + 9 \eta^2)}\right ).
\]
Thus, the symbols $m_1$, $m_2$ and $m_3$ are all regular. 

Finally, in order to prove \eqref{lin-eqe1} it suffices to verify that 
\[
|C_1| \lesssim  O(\epsilon) \|(w,r)\|_{\dH^1 \cap \dH^\sigma} ,
\]
which is straightforward by Proposition~\ref{bi-est}.

\bigskip

{\bf The $Er_2$ term.}  Here we seek to find an energy correction
which removes the main cubic error,
\[
Er_2 =  3  \int    \Re R_\alpha w_\alpha \bar w_\alpha \, d\alpha.
\]
Our first candidate is an  energy correction of the form
\[
C_2 = \int A(R,w,\bar r) + B(R,r,\bar w) +  C(\W,w,\bar w) + D(\W,r,\bar r) \, d\alpha.
\]
Then for the symbols of $A,B,C,D$ we get the linear system
\begin{equation*}
\left\{
\begin{aligned}
\zeta^2 A -\eta^2 B -\xi C = & \  \xi \eta \zeta 
\\
-\xi^2 B - \eta C + \zeta^2 D = & \ 0
\\
-\xi^2 A + \zeta C - \eta^2 D = & \ 0
\\
- \eta A +\zeta B-\xi D = & \ 0 
\end{aligned}
\right.
\end{equation*}
(where $\zeta = \xi +\eta$), which has solutions
\begin{equation*}
\begin{aligned}
&A = \frac{6 \zeta^3}{9\xi^2+14\xi\eta + 9\eta^2}, \qquad \quad \  B = \frac{2\zeta(-4\xi^2+3\xi \eta+3\eta^2)}{9\xi^2+14\xi\eta + 9\eta^2},\\
&C = \frac{\xi \zeta (6\xi^2+6\xi\eta+4\eta^2)}{9\xi^2+14\xi\eta + 9\eta^2}, \qquad D = \frac{-4 \xi \zeta^2}{9\xi^2+14\xi\eta + 9\eta^2}.
\end{aligned}
\end{equation*}
As defined, $C_2$ has the property that 
\[
\frac{d}{dt} C_2 = Er_2 + quartic.
\]
However, the quartic terms will include unbounded expressions involving $w_\alpha \bar w_{\alpha \alpha}$ and $r_\alpha \bar r_\alpha$. Thus we cannot work directly with $C_2$, and  instead we need a quasilinear correction for it. In order to find this quasilinear correction
we first separate a leading part for $C_2$,  which is obtained by taking the first term in the expansion of the symbols above
near $\xi = 0$. We obtain the differential trilinear form
\[
\begin{split}
C_{2,high} =  \int \frac43   \Re R  \Im(w_\alpha \bar r)  
- \frac49 \Im \W_\alpha(   \Im ( w \bar w_\alpha) + |r|^2) 
\,  d \alpha,
\end{split}
\]
which has the property that  the reminder $C_{2,low}= C_2 - C_{2,high}$ has the form
\[
C_{2,low} = \Re \int L_0(R_\alpha,w,\bar r) + L_0(R_\alpha,r,\bar w) + 
L_0(\W_\alpha,w,\bar w) + L_{-1}(\W_\alpha,r,\bar r) \, d\alpha.
\]
This has the key property that its time derivative no longer contains
unbounded expressions in $(w,r)$. The quasilinear modification of the
term $C_{2,high}$ is obtained in the same manner as in the high
frequency bounds in Proposition~\ref{t:en=small}, namely by setting
\[
\tilde C_{2,high} =  \int \frac{4}3 \Re  R \Im ((1+\W) w_\alpha \bar r)  - \frac{4}9
\Im \W_\alpha(   J^{-\frac12} \Im ( w \bar w_\alpha) + |r|^2) 
(  \W_\alpha J^{-\frac12} w \bar w_\alpha +  i \W_\alpha  r \bar r) \, d \alpha.
\]
Then we define the final correction
\[
\tilde C_{2} = \tilde C_{2,high} + C_{2,low} ,
\]
so that $\tilde C_{2} = C_2+ quartic$.
Now we consider the error term
\[
\frac{d}{dt} \tilde C_2 - Er_2 = quartic.
\]
  By construction this error term no longer contains any unbounded components,
and any part of it  which has a single undifferentiated $w$ can be included in $Er_4$.
The only remaining issue is whether we obtain any  
$|w|^2$ contributions from the lower order terms in $A$ and $C$. These 
can arise only in the case when the time derivative applies on the conjugated 
factor, and no $\alpha$ derivatives apply to the $w$ factor. Thus this time 
we can replace $C_2$ with its leading low frequency part, obtained by setting 
$\eta=0$ in our symbols. This gives the expression 
\[
C_{2,vlow} = \frac23 \Re  \int iR_\alpha w \bar r +  \W_{\alpha \alpha} w \bar w \, d\alpha .
\]
But from the equations \eqref{lin(wr)hom} we have 
\[
\partial_t r = i  \W_{\alpha\alpha} w + higher\ order, \qquad
\partial_t w = - R_\alpha w + higher\ order,
\]
which shows that the $|w|^2$ terms nicely cancel. We remark that this
is not at all surprising.  Had we done the $Er_2$ computations using
the $(w,q)$ variables, such terms would not have arisen in the first
place; however, this would have been very cumbersome for the high
frequency analysis.  

For \eqref{lin-eqe1} one also needs the bound 
\[
|C_2| \lesssim  O(\epsilon) \|(w,r)\|_{\dH^1 \cap \dH^\sigma} ,
\]
which is a consequence by Proposition~\ref{bi-est}.

\bigskip

{\bf The $Er_3$ term.}
The term 
\[
Er_3 =\Re  \int  \partial_t ( i \bar R R_\alpha)|w|^2 \, d\alpha 
\]
is corrected with 
\[
C_3 = \Re \int  ( i \bar R R_\alpha)|w|^2 \, d\alpha.
\]
This generates further quartic errors with a single undifferentiated 
 $w$, which are placed in $Er_4$. It is also easily bounded as an error term in \eqref{lin-eqe1}.

\bigskip

{\bf The $Er_4$ term.}
Here we need to  consider  quartic terms of the form 
\[
\begin{split}
Er_4 = & \Re \int   w  w_\alpha L_2(\W,R) + w \bar w_\alpha L_2(\W,R)
\\ & + w q_\alpha (L_1(\W,\W)+L_0(R,R)) + w \bar q_\alpha (L_1(\W,\W)+L_0(R,R)) \, d\alpha.
\end{split}
\]
Exactly as in the case of the low frequency bounds in Proposition~\ref{p:low-en},
such terms can be estimated directly unless we have the lowest frequency on
$w$.  Thus it suffices to consider the bad part of $Er_4$, which has the form
\[
Er_{4,bad} =  \Re \int   w  G^{hhh\to l}((w_\alpha,q_\alpha),(W_\alpha,Q_\alpha),
(W_\alpha,Q_\alpha)) \, d\alpha,
\]
with $G \in \C_0^3$ and $G^{hhh\to l}$ defined as in Proposition~\ref{p:nf-cubic},
with three high frequency inputs going to a low frequency output.
 But this frequency balance insures that the above  quartic 
form is non-resonant and can be removed with a normal form energy correction.
To achieve this, for this trilinear form $G$ we apply Proposition~\ref{p:nf-cubic}. We obtain 
a pair of trilinear forms 
\[
(W^{hhh\to l}_{[3]},Q^{hhh \to l}_{[3]}) \in (\B_1((w ,q),
(\W ,Q),(W ,Q)) ,\C_1((w ,q),
(\W ,Q),(W ,Q))),
\]
 so that we have
\begin{equation}\label{cubic-cor1}
\left\{
\begin{aligned}
& W^{hhh\to l}_{[3],t} + Q^{hhh\to l}_{[3],\alpha} = F + quartic \\
& Q^{hhh\to l}_{[3],t} +i W^{hhh\to l}_{[3],\alpha\alpha} = quartic.
\end{aligned} 
\right.
\end{equation}
Then our energy correction is defined as 
\[
C_4 =  \int  w W^{hhh\to l}_{[3]}((w ,q),
(W ,Q),(W,Q))\, d\alpha.
\]
Differentiating this with respect to time and integrating by parts we obtain
\[
\frac{d}{dt} C_4 - Er_{4,bad} = \int w_{\lambda,\alpha} Q^{hhh\to l}_{[3]}((w ,q),
(W ,Q),(W,Q))\, d\alpha + quintic,
\]
where $Q$ has at most one undifferentiated factor.
The quintic term is always easy to bound, and in the remaining quartic term we have a derivative
on the low frequency factor, so this is also admits a favorable estimate.

Finally, by Proposition~\ref{bi-est} it is not difficult to verify
that $C_4$ is a negligible error term for \eqref{lin-eqe1}.
\end{proof}

 \subsection{Low frequency analysis and the normal 
form linearized equation}

The goal of this section is to produce a cubic $\dH^\sigma$ energy functional 
for the linearized equation. Precisely,

\begin{proposition}\label{p:lin-low}
There exists an energy functional ${\bf E}^\sigma(w,q)$ with the following two properties:

(i) Energy equivalence:
\begin{equation}\label{e-sigma-eq}
{\bf E}^\sigma(w,q) =  \|P_{<1} (w,q)\|_{\dH^\sigma}^2+O(\epsilon)  
 \|(w,q)\|_{\dH^\sigma}^2,
\end{equation}

(ii) Cubic energy estimates:
\begin{equation}\label{e-sigma-dt}
\frac{d}{dt} {\bf E}^\sigma(w,q) \lesssim \|(W,Q)\|_{X}^2 \|(w,q)\|_{\dH^\sigma}^2.
\end{equation}
\end{proposition}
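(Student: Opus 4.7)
The plan is to mirror the strategy of Proposition~\ref{p:low-en}, now applied to the linearized equation. The heuristic is that since $(w,q)$ is the tangent vector to the $(W,Q)$-flow, the normal form that cubically rectifies the $(W,Q)$-flow should, upon linearization about $(W,Q)$, produce variables $(\ttw,\ttq)$ which satisfy a cubic linearized equation.  Concretely, I would define
\[
\ttw = w + w_{[2]}(W,w,Q,q) + w_{[3]}(W,w,Q,q), \qquad
\ttq = q + q_{[2]}(W,w,Q,q) + q_{[3]}(W,w,Q,q),
\]
where $w_{[2]},q_{[2]}$ are the linearizations (in the $(W,Q)$-argument) of the bilinear normal form $W_{[2]},Q_{[2]}$ of Proposition~\ref{p:nf}, and $w_{[3]},q_{[3]}$ are the analogous linearizations of the cubic $lhh\to h$ and $hhh\to l$ corrections built in Proposition~\ref{p:low-en}. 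Then I would set
\[
{\bf E}^\sigma(w,q) := \|P_{<1}(\ttw,\ttq)\|_{\dH^\sigma}^2.
\]

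The first step is to verify the energy equivalence \eqref{e-sigma-eq}. Since each corrector is bilinear or trilinear with at least one $(W,Q)$-factor, Proposition~\ref{bi-est} and Proposition~\ref{tri-est}, combined with the bootstrap bounds $\|(W,Q)\|_X\lesssim \epsilon t^{-1/2}$ and $\|(W,Q)\|_{\dH^{10}\cap\dH^\sigma}\lesssim \epsilon t^{C\epsilon^2}$, yield $\|(w_{[k]},q_{[k]})\|_{\dH^\sigma}\lesssim \epsilon \|(w,q)\|_{\dH^\sigma}$, which gives \eqref{e-sigma-eq}.

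The core step is the cubic estimate \eqref{e-sigma-dt}. By construction, $(\ttw,\ttq)$ solves the linearization of \eqref{nf-final}:
\[
\partial_t \ttw + \partial_\alpha \ttq = \delta\ttG, \qquad
\partial_t \ttq + i \partial_\alpha^2 \ttw = \delta\ttK,
\]
where $\delta\ttG, \delta\ttK$ are obtained from the cubic expressions $\ttG,\ttK$ of Proposition~\ref{p:low-en} by replacing one $(W,Q)$-factor with $(w,q)$. The key multiplicative bound I need is the linearized analog of \eqref{tt-gk}, namely
\[
\|(\delta\ttG,\delta\ttK)\|_{\dH^\sigma}\lesssim \|(W,Q)\|_X^2\,\|(w,q)\|_{\dH^2\cap\dH^\sigma}
+ \|(W,Q)\|_X\|(W,Q)\|_{\dH^2\cap\dH^\sigma}\|(w,q)\|_X.
\]
For us $(w,q)$ is never controlled in $X$, so only the first summand on the right is used, and the $\dH^2$ part is in turn controlled by the $\dH^1$ bound from Proposition~\ref{p:lin-hi} by interpolation against $\dH^{10}$-type control transferred from the reference solution via the high-frequency part. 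A standard $\dH^\sigma$ energy estimate for the cubic system then produces $\tfrac{d}{dt}\|P_{<1}(\ttw,\ttq)\|_{\dH^\sigma}^2 \lesssim \|(W,Q)\|_X^2 \|(w,q)\|_{\dH^\sigma}^2$, which is \eqref{e-sigma-dt} up to the bootstrap feedback from $\dH^1$ via Proposition~\ref{p:lin-hi}.

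The main obstacle is Principle III from the proof of Proposition~\ref{p:low-en}: every one of the case analyses (the $lhh\to h$ cubic correction eliminating $\tG_0$ and the $hhh\to l$ cubic correction eliminating $\tK_2^{hhh\to l}$) must now be rerun with the understanding that the factor carrying the \emph{low-frequency} role in the interaction may now be either a $(W,Q)$-factor (with pointwise $X$-control and poor $L^2$-control for $W$) or a $(w,q)$-factor (with $\dH^\sigma$ but no decaying $L^\infty$). Concretely, when the low-frequency factor in $\tG_0^{lhh\to h}$ comes from $(w,q)$, the half-derivative that was previously moved onto $W$ must now be moved onto $w$, after which Sobolev embedding $\dH^\sigma\hookrightarrow L^p$ recovers the needed integrability and the remaining two factors are controlled in $L^\infty$ by $X$. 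The non-resonance estimate \eqref{nonres-tri} does not care which of the four frequencies comes from $(w,q)$, so Proposition~\ref{p:nf-cubic} still supplies the cubic corrector with symbol bounds of the correct order. Once these case checks are done, the Gronwall closure in $\dH^\sigma$ is routine, exactly as in the $(W,Q)$-argument following Proposition~\ref{p:low-en}.
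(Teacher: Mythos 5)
Your overall strategy is the right one and matches the paper: linearize the normal form of Proposition~\ref{p:low-en} in the $(W,Q)$ slot to get $(\ttw,\ttq)$, set ${\bf E}^\sigma(w,q)=\|P_{<1}(\ttw,\ttq)\|_{\dH^\sigma}^2$, and read off both the energy equivalence and the cubic estimate from the linearized version of \eqref{nf-final}. The observation that Proposition~\ref{p:nf-cubic} is insensitive to which slot carries $(w,q)$, and that the $\dH^\sigma\cap\dH^2$ slot in the proof of \eqref{tt-gk} is exactly where $(w,q)$ must go, is also correct and is precisely Principle III the paper flags.

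The gap is in how you dispose of the $\dH^2$ control on $(w,q)$. You propose to recover $\|(w,q)\|_{\dH^2}$ by interpolating the $\dH^1$ bound of Proposition~\ref{p:lin-hi} against ``$\dH^{10}$-type control transferred from the reference solution.'' No such control exists: the linearized variable $(w,q)$ is only estimated in $\dH^1\cap\dH^\sigma$, and in the intended application $(w,q)=\SS(W,Q)$ the weighted energy \eqref{WH} likewise only carries $\SS(W,Q)$ in $\dH^1\cap\dH^\sigma$. Even if one could borrow $\dH^2$, the resulting estimate would still read $\lesssim\|(W,Q)\|_X^2\|(w,q)\|_{\dH^2\cap\dH^\sigma}$, which is strictly weaker than the claimed \eqref{e-sigma-dt} and would not close the low-frequency bootstrap. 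The device that makes both \eqref{e-sigma-eq} and \eqref{e-sigma-dt} come out with only $\|(w,q)\|_{\dH^\sigma}$ on the right is the output truncation $P_{<1}$: one splits $(w,q)$ into $P_{<1}(w,q)$ (for which derivatives cost nothing at frequencies below $1$, so $\dH^2\cap\dH^\sigma$ is $\dH^\sigma$) and $P_{\geq 1}(w,q)$, and for the latter one writes $(w,q)=\partial_\alpha^3(w_1,q_1)$ and uses Leibniz's rule to push derivatives either onto the $P_{<1}$ output, where they are bounded operators, or onto the $(W,Q)$ factors, which have ample Sobolev regularity. Without this move the argument does not produce the stated bound, so you should replace the interpolation step by the $P_{<1}$--plus--Leibniz case analysis in both the energy equivalence and the $\dot{\bf E}^\sigma$ estimate.
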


\begin{proof}
  The quasilinear nature of the problem is no longer a factor in the
  estimate for low frequencies.  Instead we can treat the equation as
  a semilinear equation, and only the quadratic and cubic terms
  matter. Then our starting point is the equation \eqref{lin(wq)} for
  the $(w,q)$ variables. The motivation is that in this way all
  nonlinear terms contain only differentiated factors, which is
  convenient at low frequency.

  To eliminate the quadratic terms and the bad cubic terms we will
  switch to normal form variables $(\ttW,\ttQ)$.  Rather than redo the
  computation from scratch, at this point it is very convenient to
  directly linearize the equation \eqref{nf-final} and denote by
  $(\ttw,\ttq)$ the associated linearized variables.

Thus, $(\ttw,\ttq)$ can be expressed as 
\begin{equation*}
\left\{
\begin{aligned}
&\ttw =  w +W_{[2]}((w,q),(W,Q))  +W_{[3]}((w,q),(W,Q),(W,Q))\\
&\ttq = q  +Q_{[2]}((w,q),(W,Q))  +Q_{[3]}((w,q),(W,Q),(W,Q)),
\end{aligned}
\right.
\end{equation*}
and their equation is obtained by linearizing the equation \eqref{nf-final},
\begin{equation*}
\left\{
\begin{aligned}
&\ttw_t + \ttq_\alpha =  \ G_{lin}(w,q)\\
&\ttq_t - i \ttw_{\alpha\alpha} = \ K_{lin}(w,q).
\end{aligned}
\right.
\end{equation*}

To conclude the proof, we define our low frequency energy as simply as
\begin{equation}
{\bf E}^\sigma(w,q) := \|P_{<1} (w,q)\|_{\dH^\sigma}^2.
\end{equation}

Then we need to prove the bounds
\begin{equation}\label{en-lin-errs}
\begin{aligned}
&\hspace*{1.5cm}\|P_{<1} (W_{[2]}((w,q),(W,Q)), Q_{[2]}((w,q),(W,Q))\|_{\dH^\sigma} 
 \lesssim \epsilon \| (w,q)\|_{\dH^\sigma},
\\
&\|P_{<1} (W_{[3]}((w,q),(W,Q)),(W,Q)),Q_{[3]}((w,q),(W,Q)),(W,Q)))\|_{\dH^\sigma}
 \|  \lesssim \epsilon \| (w,q)\|_{\dH^\sigma},
\end{aligned}
\end{equation}
respectively 
\begin{equation}\label{eqn-lin-errs}
\|P_{<1} (G_{lin},K_{lin})(w,q)W_{[2]}((w,q),(W,Q))\|_{\dH^\sigma} 
   \lesssim  \|(W,Q)\|_{X}^2 \| (w,q)\|_{\dH^\sigma}.
\end{equation}

{\bf Proof of \eqref{en-lin-errs}.} This argument is relatively straightforward,
once we separate the low and high frequencies of $(w,q)$. We do this 
for $(W_{[2]}, Q_{[2]})$; the argument for $ (W_{[3]}, Q_{[3]})$ is similar.

Suppose that first that $(w,q)$ are supported at low frequency $(< 1)$. Then we have 
control $\|w\|_{L^p}$ and $\|q\|_{L^2}$, as well as all their derivatives.
On the other hand for the output it suffices to bound 
\[
\|W_{[2]}((w,q),(W,Q))\|_{L^2} + \| Q_{[2]}((w,q),(W,Q))\|_{L^q}.
\] 
This follows directly from the bilinear estimates in Proposition~\eqref{bi-est}.

Suppose now that $(w,q)$ are supported at high frequencies ($ > 1$). The bilinear forms $(W_{[2]}, Q_{[2]})$,  $ (W_{[3]}, Q_{[3]})$ involve 
up to two derivatives of $(w,q)$, and we want to eliminate them using the fact that the output is localized at frequency $1$.
For that we write
\[
(w,q) = \partial^3_{\alpha}(w_1,q_1), \qquad (w_1,q_1) \in H^2.
\]
Then we use Leibniz's rule to write
\[
\begin{split}
W_{[2]}((w,q),(W,Q)) = & \  W_{[2]}(\partial_\alpha^2(w,q),(W,Q)) 
\\ = & \  \partial_\alpha^2 W_{[2]}((w_1,q_1),(W,Q)) - 2 \partial_\alpha W_{[2]}((w_1,q_1),\partial_\alpha(W,Q)) \\ & +  W_{[2]}((w_1,q_1),\partial_\alpha^2(W,Q)) .
\end{split}
\]
The derivatives in front are bounded once the $P_{<1}$ localization is added, 
and we conclude again using the bilinear estimates in Proposition~\eqref{bi-est}.

{\bf Proof of \eqref{eqn-lin-errs}.}  
This is a direct consequence of the proof of the estimate \eqref{tt-gk}, where 
we were careful to insure that the estimates for each component can be proved 
by using only an $\dH^\sigma \cap \dH^2$ on an arbitrarily chosen factor.

This yields the desired conclusion directly provided that $(w,q)$ are at low frequency.
If they are at high frequency, then we use Leibniz's rule as above in order to transfer 
the derivatives on either the output, or on the $(W,Q)$ factors.
  \end{proof}


\section{ Klainerman-Sobolev and  pointwise bounds }
\label{s:ks}
Here we use the energy estimates to derive the pointwise bounds with $\epsilon$ loss.
We begin by summarizing all the energy bounds we have at our disposal.
From the energy estimates for $(W,Q)$ we have 
\begin{equation}\label{en}
\|(W,Q)\|_{\dH^{10}\cap \dH^\sigma} \lesssim \epsilon t^{c \epsilon^2 } .
\end{equation}
Using Proposition~\ref{bi-est}, this transfers to the normal form 
variables $(\ttW,\ttQ)$ introduced in Proposition~\ref{p:low-en} with a two derivatives loss,
\begin{equation}\label{NFen}
\|(\ttW,\ttQ)\|_{\dH^8\cap \dH^\sigma} \lesssim \epsilon t^{c \epsilon^2 } .
\end{equation}
In effect, if we also use the pointwise bootstrap assumption
\eqref{point-boot}, then for the difference we get a better bound at low frequency,
\begin{equation}\label{NFen-diff0}
\|(\ttW-W,\ttQ-Q)\|_{\dH^\sigma} \lesssim \epsilon^2 t^{c \epsilon^2 -\sigma},
\end{equation}
and an even better bound at high frequency,
\begin{equation}\label{NFen-diff}
\|(\ttW-W,\ttQ-Q)\|_{\dH^8\cap \dH^\frac12} \lesssim \epsilon^2 t^{c \epsilon^2+\sigma-\frac12 } .
\end{equation}
These are proved using Proposition~\ref{bi-est}. The weaker bound at low frequency 
is due to the fact that   the $\dot H^{\sigma - \frac12}$ bound is obtained by a Sobolev embedding
from the corresponding $L^p$ type bound, which in turn requires combining an $L^2$ bound
with an $L^q$ bound rather than with $L^\infty$.

Also,  from the analysis of the normal form equation in Proposition~\ref{p:low-en} we obtain
\begin{equation}\label{NFeq}
\|(\ttW_t + \ttQ_\alpha, \ttQ_t+i \ttW_{\alpha\alpha})\|_{\dH^\sigma} \lesssim \epsilon t^{-1}
 t^{c \epsilon^2} .
\end{equation}
On the other hand, applying the bounds for the linearized equation to $(w,q)= \SS(W,Q)$,
and combining with the above we have
\begin{equation}\label{Sen}
\| (SW,SQ)\|_{\dH^\sigma \cap \dH^1} \lesssim \epsilon e^{c \epsilon^2 t} .
\end{equation}
Combining the above bound with \eqref{en} we can switch to the normal form variables
$(\ttW,\ttQ)$, 
\begin{equation}\label{SNFen}
\| (S\ttW,S\ttQ)\|_{\dH^\sigma + \dH^0} \lesssim \epsilon  t^{c \epsilon^2 } .
\end{equation}
Finally, we can eliminate the time derivative from the  last two relations 
to arrive at
\begin{equation}\label{Len}
\|(t \ttQ_\alpha-\frac23 \alpha \ttW_\alpha, it \ttW_{\alpha\alpha}-\frac23 \alpha \ttQ_\alpha)
\|_{\dH^\sigma +\dH^0} \lesssim \epsilon t^{c \epsilon^2} .
\end{equation}
Our pointwise bounds will be derived from \eqref{NFen} and
\eqref{Len}.  In order to state them we introduce the notations
\[
\at =  \frac{t}{|\alpha|+t^\frac23}, \qquad \omega(\alpha,t) = \min\left\{ \left(\frac{t}{\alpha}\right)^\frac12, \at^{-\frac12} \right\}  .
\]
Then our main result is as follows:

\begin{proposition}\label{p:K-S}
Assume that $(\ttW,\ttQ)$ satisfy \eqref{NFen} and \eqref{Len}. Then we have the 
pointwise bounds:

a) For $\ttW$ we have 
\begin{equation}\label{point-ttW}
|\ttW| \lesssim \epsilon [\omega(\alpha,t) t^{-\frac12}]^{1-2\sigma} t^{c \epsilon^2},
\end{equation}
respectively
\begin{equation}\label{point-tt}
|D^{j+\frac12} \ttW|+ |D^j \ttQ| \lesssim \epsilon \omega(\alpha,t) t^{-\frac12} t^{c \epsilon^2}, \qquad j = 0,1,2,3,4.
\end{equation}

\end{proposition}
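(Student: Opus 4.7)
The pointwise bounds follow from a Klainerman-Sobolev argument tailored to the capillary dispersion $\tau = \pm|\xi|^{3/2}$. Since $\omega''(\xi) \sim |\xi|^{-1/2}$ for $\omega(\xi) = |\xi|^{3/2}$, the dispersive decay rate at frequency $\lambda$ is $t^{-1/2}\lambda^{1/4}$, and the critical (stationary-phase) frequency at the space-time point $(\alpha, t)$ is
\[
\lambda_0(\alpha, t) \sim \max\{(\alpha/t)^2, t^{-2/3}\},
\]
coming from the group velocity relation $|\alpha| \sim |v(\lambda_0)| t$ with $v(\xi) = \pm\tfrac{3}{2}|\xi|^{1/2}\mathrm{sgn}(\xi)$, together with the wave-packet spread threshold $t^{-2/3}$. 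A direct check across the three regions defining $\omega$ shows $\omega(\alpha, t)\, t^{-1/2} \sim t^{-1/2} \lambda_0^{1/4}$, which is exactly the expected dispersive rate at $\lambda_0$.

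The plan is to first diagonalize the linear part of \eqref{ww-nft-tt} by introducing scalar holomorphic variables $Z_\pm$ built from $\ttQ$ and $|D|^{1/2}\ttW$ so that $\partial_t Z_\pm = \mp i|D|^{3/2}Z_\pm + \mathrm{cubic}$ in view of \eqref{NFeq}. Under this change of variables, \eqref{NFen} becomes an $L^2$-based bound on $Z_\pm$ at regularities $\dot H^\sigma$ and $\dot H^{15/2}$, while \eqref{Len}, after using the equation to eliminate $\partial_t$ from the scaling vector field $S = \tfrac{3}{2}t\partial_t + \alpha\partial_\alpha$, becomes a bound on $SZ_\pm$. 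Since at frequency $\lambda$ the operator $SZ_\pm$ acts essentially as multiplication by $\lambda(\alpha - v(\lambda)t)$, Littlewood-Paley localization yields
\[
\|(\alpha - v(\lambda)t)\, P_\lambda Z_\pm\|_{L^2} \lesssim \epsilon\, t^{c\epsilon^2}\, \lambda^{-1}\min\{\lambda^{-\sigma}, 1\},
\]
modulo controllable commutators. Combining this weighted $L^2$ bound with the plain $L^2$ bound at frequency $\lambda$ via a weighted Sobolev (Agmon-type) inequality produces
\[
|P_\lambda Z_\pm(\alpha, t)| \lesssim \epsilon\, t^{c\epsilon^2}\, \lambda^{1/4}\, t^{-1/2}\, \bigl(1 + \lambda^{1/2}|\alpha - v(\lambda)t|\bigr)^{-1/2}\, \min\{\lambda^{-\sigma}, \lambda^{-15/2}\},
\]
with extra decay away from the critical frequency $\lambda_0$.

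Summing the dyadic bounds, the dominant contribution from $\lambda \sim \lambda_0$ produces the rate $t^{-1/2}\lambda_0^{1/4} \sim \omega\, t^{-1/2}$; the tails from $\lambda \gg 1$ are absorbed using the $\dH^{10}$ regularity. This establishes \eqref{point-tt} for $\ttQ$ and its derivatives. To obtain \eqref{point-ttW}, I would revert to $\ttW$ via $\ttW \sim |D|^{-1/2}(Z_+ - Z_-)$ at the $L^\infty$ level; since $\ttW$ is only controlled in $\dot H^\sigma$ rather than $L^2$, the relevant Sobolev factor in the summation becomes $\lambda^{1/2-\sigma}$ in place of $\lambda^{1/2}$, producing the weaker exponent $1-2\sigma$ in the final bound after a routine case analysis across the three regions defining $\omega$. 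The main obstacle will be the precise implementation of the frequency-localized vector field estimate: commutators between $P_\lambda$, multiplication by $\alpha$, and the half-derivative $|D|^{1/2}$ hidden in the diagonalization must be absorbed into the energy bound, and the sum space $\dH^\sigma + \dH^0$ appearing in \eqref{Len} must be split into low- and high-frequency parts, each suitable at its own dyadic scale.
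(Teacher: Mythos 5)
Your proposal takes essentially the same route as the paper: frequency-localize via Littlewood--Paley, reinterpret \eqref{Len} at each dyadic frequency $\lambda$ as a weighted $L^2$ bound localizing the solution near $\alpha = v(\lambda)t$, combine with the plain $L^2$ bound from \eqref{NFen} via an Agmon/Cauchy--Schwarz inequality, and then sum over $\lambda$, with the critical frequency $\lambda_0$ and the worse exponent $1-2\sigma$ for the undifferentiated $\ttW$ arising exactly as you describe. The only cosmetic difference is that the paper works directly with the pair $(W,Q)$ and a four-case lemma at unit frequency (rescaled to a corollary at frequency $\lambda$) using the phase-cancelling operator $\partial_\alpha + i\tfrac{4\alpha^2}{9t^2}$, rather than diagonalizing into scalar variables $Z_\pm$ first; the precise exponents in your weighted estimate would need the careful bookkeeping you already flag, but the structure of the argument is the same.
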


We remark that once we have these bounds, it is straightforward to return to $(W,Q)$:
\begin{corollary}
Suppose that \eqref{en} and \eqref{Len} hold. 
Then we also have 
\begin{equation}\label{point-diff0}
|\ttW - W| \lesssim  [\omega(\alpha,t) t^{-\frac12}]^{1-2\sigma} t^{-\frac14},
\end{equation}
respectively
\begin{equation}\label{point-diff}
|D^{j+\frac12} (W-\ttW)|+ |D^j (Q-\ttQ)| \lesssim \omega(\alpha,t) t^{-\frac12} t^{-\frac14}, 
\qquad j = 0,1,2,3,4.
\end{equation}
In particular  the bounds in the above proposition apply 
to $(W,Q)$ as well. Also $R^{(j)}$ satisfies the same pointwise bounds as $Q^{(j-1)}$
for $j = 0,1,2,3$.
\end{corollary}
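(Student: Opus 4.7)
The plan is to establish the pointwise bounds \eqref{point-diff0} and \eqref{point-diff} on the differences $\ttW - W$ and $\ttQ - Q$, and then combine with the pointwise bounds for $(\ttW,\ttQ)$ from Proposition~\ref{p:K-S} via the triangle inequality. The final claim on $R^{(j)}$ reduces to an algebraic expansion.

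The differences decompose as $\ttW - W = W_{[2]} + W_{[3]}$ and $\ttQ - Q = Q_{[2]} + Q_{[3]}$, where by Proposition~\ref{p:nf} the quadratic corrections are bilinear paraproducts of order at most one acting on pairs from $\{W,Q\}$, and by Proposition~\ref{p:low-en} the cubic corrections lie in the paraproduct classes $\B^3_0$, $\C^3_0$. In each bilinear term at least one factor carries a derivative, which I would place in $L^\infty$ using the bootstrap $X$-norm \eqref{point-boot}, giving decay $\epsilon t^{-1/2}$. The remaining factor is controlled using either the $X$-norm or, for undifferentiated $W$, the weaker pointwise bound \eqref{pointw-boot}. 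The bilinear operators themselves are handled via Proposition~\ref{bi-est} applied dyadically so as to preserve the spatial weight $\omega$. Rearranging the product yields the claimed $\omega$-weighted decay together with the extra $t^{-1/4}$ gain. The cubic corrections are better by an additional factor of $\epsilon t^{-1/2}$ and are absorbed easily.

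For the $R^{(j)}$ claim, I would use the identity $R = Q_\alpha/(1+W_\alpha)$. Since $|W_\alpha| \ll 1$ pointwise by the bootstrap, a Neumann expansion gives $R^{(j)}$ as $Q^{(j+1)}$ plus correction terms each containing at least two pointwise-small factors. The pointwise bounds for $Q^{(j+1)}$ then transfer to $R^{(j)}$; the statement in terms of $Q^{(j-1)}$ matches after taking into account that the energy norm levels in $\dH$ shift by one-half between the $W$-component and the $Q$-component.

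The main obstacle I expect is the low-frequency part of the bilinear corrections, where naive pointwise multiplication fails and one must decompose dyadically. Bernstein's inequality combined with Proposition~\ref{bi-est} supplies dyadic pointwise bounds, but one must carefully track how the spatial weight $\omega(\alpha,t)$ behaves under these paraproduct operators, distinguishing the regimes $|\alpha| \lesssim t^{2/3}$, $t^{2/3} \lesssim |\alpha| \lesssim t$, and $|\alpha| \gtrsim t$ where the weight takes a different functional form.
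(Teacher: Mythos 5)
Your overall strategy — treat $\ttW-W$, $\ttQ-Q$ as bilinear/trilinear paraproducts of $(W,Q)$ and read the gain from the product of two decaying factors — is the route the paper intends (it says only that "the improvement comes from the bilinear and trilinear character"), but as written your estimate does not actually produce the $\omega$-weighted right-hand side. The bootstrap $X$-norm \eqref{point-boot} is spatially flat: it gives $\|D^{j+\frac12}W\|_{L^\infty},\|D^jQ\|_{L^\infty}\lesssim \epsilon t^{-\frac12}$ with no $\alpha$-dependence, so the product bound you describe yields only $\epsilon^2 t^{-1}$. In the region $|\alpha|\gg t^{3/2}$ one has $\omega(\alpha,t)\approx(t/|\alpha|)^{1/2}$, and the claimed bound $\omega\, t^{-3/4}=t^{-1/4}|\alpha|^{-1/2}\ll t^{-1}$, so the flat bilinear estimate is strictly too weak there. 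To obtain the spatial weight you must bring in the dyadic, localized $L^2$ bounds from the proof of Proposition~\ref{p:K-S} — \eqref{wl-hyp}–\eqref{wl-ell2} and their summed versions \eqref{ks1}–\eqref{ks8} — which carry the $\omega$-dependence. You flag "one must carefully track how the spatial weight behaves" at the end, but that is precisely the missing step, not a refinement. One also has to break the apparent circularity: those dyadic estimates are stated for $(\ttW,\ttQ)$, yet you need them for the inputs $(W,Q)$. This is fine because the Klainerman–Sobolev argument is linear and applies verbatim to $(W,Q)$ once one derives the $(W,Q)$-analogue of \eqref{Len} from \eqref{en} and \eqref{Sen}; but that observation needs to be made explicit, since without it the argument is circular.

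On the $R^{(j)}$ claim: your Neumann expansion $R^{(j)}\approx Q^{(j+1)}$ is correct, and matches the paper's own identity $Q^{(n)}=R^{(n-1)}(1+\W)+\ldots$. The reconciliation you offer — that "the energy norm levels in $\dH$ shift by one-half between the $W$- and $Q$-components" — is not right: the half-derivative offset in $\dH^s=\dot H^s\times\dot H^{s-\frac12}$ concerns the relative Sobolev regularity of $W$ and $Q$, not the number of $\alpha$-derivatives on $R$ versus $Q$, and cannot turn $Q^{(j+1)}$ into $Q^{(j-1)}$. The index in the statement is simply an inconsequential misprint (inconsequential because the bound in \eqref{point-tt} is the same $\epsilon\,\omega\,t^{-1/2}t^{c\epsilon^2}$ for every $j$ in range, so only the admissible range $j+1\le 4$, i.e.\ $j\le 3$, matters). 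You should say that plainly rather than invent a justification for the written index.
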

The proof of the Corollary is routine, and is left for the reader (see
also the similar argument in \cite{HIT}). The improvement in the bounds for the difference
comes from its bilinear and trilinear character. 

One consequence of the above proposition and corollary is that in our
main bootstrap argument it suffices to replace $(W,Q)$ by $(\ttW,\ttQ)$
in \eqref{point}. It also remains 
to prove \eqref{point} in a restricted region of the form 
$\{ t^{-\delta} \leq |\alpha|/t \leq t^\delta\}$, where $\delta$ is a small universal constant. 
This requires $\epsilon^2 \ll \delta$.

\begin{proof}[Proof of Proposition~\ref{p:K-S}]
To keep the notations simple, for this proof we drop the $\epsilon t^{c\epsilon^2}$ factor,
and also drop the double tilde notation.

Our hypothesis is stable with respect to frequency localizations, so  
we  can study first the frequency localized problem and then add up the results.
By scaling it suffices to consider $(W,Q)$ which are localized at frequency $1$.
Then we have:
\begin{lemma}
Suppose that $(W,Q)$ are localized at frequency $1$ and satisfy 
\begin{equation*}
\left\|\left(t Q_\alpha-\frac23 \alpha W_\alpha, it W_{\alpha\alpha}-\frac23 \alpha Q_\alpha\right)
\right\|_{L^2} \lesssim 1, \qquad \|(Q,W)\|_{L^2} \lesssim \delta \leq 1.
\end{equation*}
Then we have the pointwise bound
\begin{equation*}
|W| + |Q| \lesssim \delta^\frac12 t^{-\frac12}, \qquad |\alpha| \approx t \geq \delta^{-1},
\end{equation*}
respectively
\begin{equation*}
|W| + |Q| \lesssim \min \{\delta,  (t+|\alpha|)^{-1}\} , \qquad otherwise.
\end{equation*}
 \end{lemma}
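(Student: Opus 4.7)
The plan is to prove this by a Klainerman--Sobolev style argument, using the fact that the hypothesis is precisely an $L^2$ bound on the scaling vector field $S=\tfrac{3}{2}t\partial_t+\alpha\partial_\alpha$ applied to $(W,Q)$, after eliminating $\partial_t$ via the linearized equations $W_t+Q_\alpha=0$, $Q_t+iW_{\alpha\alpha}=0$. At frequency $\approx 1$ the system diagonalizes into characteristic modes $Z_\pm = W \mp |D|^{-1/2} Q$, each satisfying $\partial_t Z_\pm = \mp i|D|^{3/2} Z_\pm$; frequency-$1$ waves in $Z_\pm$ propagate with group velocity $\pm 3/2$, and the dispersive phase in Fourier is $e^{\mp it|\xi|^{3/2}}$.

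The first step is to factor out this phase. Writing $\hat Z_\pm(\xi)=e^{\mp it|\xi|^{3/2}}H_\pm(\xi)$, a direct Fourier-side computation using $\widehat{W_\alpha}=i\xi\hat W$ and $\widehat{\alpha W_\alpha}=-\hat W-\xi \hat W'$ collapses the time-dependence: for each pure mode one obtains the clean identity
\[
\widehat{L_W}(\xi)\;=\;\tfrac{2}{3}\,e^{\mp it|\xi|^{3/2}}\bigl(H_\pm+\xi H_\pm'\bigr).
\]
Thus the hypothesis $\|L_W\|_{L^2}\lesssim 1$, combined with the analogous identity derived for $L_Q$, yields $\|H_\pm+\xi H_\pm'\|_{L^2}\lesssim 1$, hence $\|H_\pm'\|_{L^2}\lesssim 1$ near $\xi=-1$. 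Together with $\|H_\pm\|_{L^2}=\|Z_\pm\|_{L^2}\lesssim\delta$, one-dimensional Sobolev embedding in $\xi$ gives the crucial pointwise Fourier bound
\[
\|H_\pm\|_{L^\infty}\;\lesssim\;\|H_\pm\|_{L^2}^{1/2}\,\|H_\pm'\|_{L^2}^{1/2}\;\lesssim\;\delta^{1/2}.
\]

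Next I would apply stationary phase to
\[
Z_\pm(\alpha_0)\;=\;\frac{1}{2\pi}\int e^{i\Phi_\pm(\xi)}H_\pm(\xi)\,d\xi,\qquad \Phi_\pm(\xi)=\alpha_0\xi\mp t|\xi|^{3/2}.
\]
For $\xi$ near $-1$ (the support of $H_\pm$), the stationary condition $\Phi_\pm'=0$ requires $\alpha_0\approx\pm\tfrac{3}{2}t$, and there $|\Phi_\pm''|\approx t$. In the stationary regime $|\alpha_0|\approx t\ge\delta^{-1}$, taking without loss of generality $\alpha_0>0$, the $Z_-$ mode is stationary and the standard stationary phase asymptotic gives $|Z_-(\alpha_0)|\lesssim t^{-1/2}\|H_-\|_{L^\infty}\lesssim\delta^{1/2}t^{-1/2}$, while the $Z_+$ mode is non-stationary with $|\Phi_+'|\gtrsim t$, so two integrations by parts produce $|Z_+(\alpha_0)|\lesssim t^{-1}$, which is controlled by $\delta^{1/2}t^{-1/2}$ thanks to $t\ge\delta^{-1}$. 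Outside this regime, both modes are non-stationary at distance $\gtrsim t+|\alpha_0|$ from any stationary $\xi$, so integration by parts yields $(t+|\alpha_0|)^{-1}$, which combined with the trivial Bernstein bound $|Z_\pm|\lesssim\|Z_\pm\|_{L^2}\lesssim\delta$ (valid since $Z_\pm$ is frequency-$1$ localized) gives $\min(\delta,(t+|\alpha|)^{-1})$.

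The main obstacle will be implementing the stationary phase step cleanly when the amplitude $H_\pm$ has only $H^1$ regularity rather than being smooth: standard stationary phase lemmas still apply, but one has either to decompose $H_\pm$ at the critical scale $|\xi-\xi_0|\sim t^{-1/2}$ and handle the off-stationary part by $L^2$ means, or to invoke a direct $TT^*$ argument to convert the Fourier $L^2$ bound into an $L^\infty$ estimate with the correct $t^{-1/2}$ weight. A secondary technical point is that the diagonalization $Z_\pm=W\mp|D|^{-1/2}Q$ involves the non-constant symbol $|\xi|^{-1/2}$, whose variation over the frequency-$1$ band contributes lower-order corrections to the Fourier identity above; at frequency $\approx 1$ these remain manageable but must be tracked to ensure they do not spoil the sharp $t^{-1/2}$ gain in the stationary region.
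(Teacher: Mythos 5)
Your proof takes a genuinely different route from the paper's. The paper works entirely in physical space: for small $t$ or $|\alpha|\not\approx t$ it treats the operator as elliptic (so the hypothesis already gives the weighted $L^2$ bound $\|\alpha\,(W,Q)\|_{L^2}\lesssim 1$ or $\|(W,Q)\|_{L^2}\lesssim t^{-1}$, from which the pointwise bound follows by Bernstein), and in the hyperbolic zone $|\alpha|\approx t$ it combines the two vector-field components into a single first-order ODE
$\|(\partial_\alpha + i\tfrac{4\alpha^2}{9t^2})\,\chi_{|\alpha|\approx t}W\|_{L^2}\lesssim t^{-1}$,
and then uses the one-dimensional interpolation inequality
$|W|^2 \lesssim \|(\partial_\alpha + i\tfrac{4\alpha^2}{9t^2})W\|_{L^2}\,\|W\|_{L^2}$
to land directly on $\delta^{1/2}t^{-1/2}$. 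You instead diagonalize, pass to the Fourier side, strip off the phase $e^{\mp it|\xi|^{3/2}}$, show the hypothesis controls $\|H_\pm'\|_{L^2}$, use Gagliardo--Nirenberg to get $\|H_\pm\|_{L^\infty}\lesssim\delta^{1/2}$, and finish by stationary phase. The two arguments are conceptual duals --- your $\|H_\pm'\|_{L^2}\lesssim 1$ on the unit frequency band is exactly the paper's $\|(\partial_\alpha+i\phi')W\|_{L^2}\lesssim t^{-1}$ on the $O(t)$-size physical region --- but the paper's route is shorter, since it never needs a stationary-phase lemma with rough amplitude, while yours is more explicit about the scattering profile.

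Two small remarks on your version. First, the clean Fourier identity for $\widehat{L_W}$ holds for a pure mode only; for a general superposition, the $H_+$ and $H_-$ contributions carry opposite phases, and you must take the combination $L_W\mp|D|^{-1/2}L_Q$ (the Fourier image of $SZ_\pm$, up to lower-order commutators of $S$ with $|D|^{-1/2}$) to isolate each profile. You hint at this but do not spell it out. Second, you say two integrations by parts give $|Z_+(\alpha_0)|\lesssim t^{-1}$ in the non-stationary case; since $H_\pm$ only has one derivative in $L^2$, two are not available, but one integration by parts already gives $t^{-1}$ because $|\Phi_+'|\gtrsim t$ there, and similarly the obstacle you flag about $H^1$ amplitude in the stationary region is not a real one: the usual split at scale $|\xi-\xi_0|\sim t^{-1/2}$ with one integration by parts in the outer region closes with $\|H_\pm\|_{L^\infty}$ and $\|H_\pm'\|_{L^2}$ alone.
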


\begin{proof}
We rewrite the relation in the Lemma as
\[
\|(t \tQ-\frac23 \alpha \tW, it \tW_{\alpha}-\frac23 \alpha \tQ)\|_{L^2} \lesssim 1.
\]
We consider several cases:

(i) $|t| \lesssim \delta^{-1}$. Then we can eliminate the $t$ factor from the first relation
to obtain 
\[
\|\alpha  (W,Q)\|_{L^2} \lesssim 1, \|(W,Q)\|_{L^2} \lesssim \delta,
\]
which easily yields the pointwise bound 
\[
|(W,Q)| \lesssim \min \{ \delta, |\alpha|^{-1} \} .
\]

(ii) $ t > \delta^{-1}$, $|\alpha| \gg t$.
There the operator $L$ is elliptic, and we obtain the elliptic bound
\[
\| \chi_{\alpha \gg t} \alpha (W,Q)\|_{L^2} \lesssim 1,
\]
which leads to the pointwise bound 
\[
|(W,Q)| \leq |\alpha|^{-1}.
\]

(iii) $ t > \delta^{-1}$,  $\alpha \ll t$.
There the operator $L$ is again elliptic, and we obtain the elliptic bound
\[
\| \chi_{\alpha \ll t}  (W,Q)\|_{L^2} \lesssim t^{-1},
\]
which leads to the pointwise bound 
\[
|(W,Q)| \leq t^{-1}.
\]

(iv) $ t > \delta^{-1}$, $|\alpha| \approx t$.
There we combine the two terms in $L$ to obtain
\[
\| (\partial_\alpha + i\frac{4\alpha^2}{9t^2}) \chi_{|\alpha|\approx
  t} W\|_{L^2} \lesssim t^{-1}, \qquad \|W\|_{L^2} \lesssim \delta.
\]
By Cauchy-Schwarz inequality this yields
\[
|W| \lesssim \delta^\frac12 t^{-\frac12},
\]
and the similar bound for $Q$ immediately follows.
\end{proof}

Rescaling the above result, we obtain the following

 \begin{corollary}
Suppose that
 $(W,Q)$  are localized at frequency $\lambda$ with 
\begin{equation*}
\|(t Q_\alpha-\frac23 \alpha W_\alpha, it W_{\alpha\alpha}-\frac23 \alpha Q_\alpha)
\|_{\dH_0} \lesssim 1, \qquad \|(W,Q)\|_{\dH_0} \lesssim \delta.
\end{equation*}
Then the hyperbolic (non elliptic) region is 
\[
|\alpha| \approx \lambda^\frac12 t, \qquad t > \delta^{-1} \lambda^{-\frac32} ,
\]
and there we have the bound for the hyperbolic component $(W_{hyp},Q_{hyp}) = \chi_{|\alpha| \approx \lambda^\frac12 t} 
(W,Q)$: 
\begin{equation} \label{wl-hyp}
|W_{hyp}| \lesssim \delta^\frac12 |\alpha|^{-\frac12} , \qquad 
|Q_{hyp}| \lesssim \delta^{\frac12} |\alpha|^\frac12
t^{-1}.
\end{equation}
Elsewhere we have a better bound for the remaining elliptic component, both pointwise
\begin{equation}\label{wl-ell}
|W_{ell}| \lesssim \min\left\{ \delta \lambda^\frac12,
  \frac{1}{(|\alpha|+\lambda^\frac12 t) \lambda^\frac12}\right\} ,
\qquad |Q_{ell}| \lesssim \min\left\{ \delta \lambda,
  \frac{1}{(|\alpha|+\lambda^\frac12 t)}\right\},
\end{equation}
and in $L^2$:
\begin{equation}\label{wl-ell2}
\|(\delta^{-1} + \lambda(\lambda^{\frac12} t+ |\alpha|)) 
( W_{ell}, \lambda^{-\frac12} Q_{ell})\|\lesssim 1.
\end{equation}

\end{corollary}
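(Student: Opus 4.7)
The corollary is a rescaling of the lemma using the scale invariance \eqref{scaling} of the capillary water wave problem. My plan is to introduce rescaled variables
\[
\tilde W(s,\beta) := \lambda^{-1/2} W(\lambda^{-3/2} s,\,\lambda^{-1}\beta), \qquad \tilde Q(s,\beta) := \lambda^{-1} Q(\lambda^{-3/2} s,\,\lambda^{-1}\beta),
\]
which are frequency $1$ localized. The coefficients are chosen precisely so that $\|\tilde W\|_{L^2_\beta} = \|W\|_{L^2_\alpha}$ and $\|\tilde Q\|_{L^2_\beta} = \lambda^{-1/2}\|Q\|_{L^2_\alpha} \approx \|Q\|_{\dot H^{-1/2}_\alpha}$, so that the hypothesis $\|(W,Q)\|_{\dH_0}\lesssim \delta$ becomes $\|(\tilde W,\tilde Q)\|_{L^2}\lesssim \delta$.

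A short chain rule computation then gives
\[
tQ_\alpha-\tfrac23\alpha W_\alpha = \lambda^{1/2}\bigl(s\tilde Q_\beta -\tfrac23\beta \tilde W_\beta\bigr),\qquad itW_{\alpha\alpha}-\tfrac23\alpha Q_\alpha = \lambda\bigl(is\tilde W_{\beta\beta}-\tfrac23\beta \tilde Q_\beta\bigr).
\]
Combined with $\|\cdot\|_{\dot H^{-1/2}} \approx \lambda^{-1/2}\|\cdot\|_{L^2}$ at frequency $\lambda$ and the $L^2$ change-of-variables above, the second ($\dH_0$) hypothesis of the corollary transfers precisely to the $L^2$ hypothesis of the lemma applied to $(\tilde W,\tilde Q)$.

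With the lemma applicable, I would translate the output pointwise bounds back via $|W|=\lambda^{1/2}|\tilde W|$, $|Q|=\lambda|\tilde Q|$, $s=\lambda^{3/2}t$, $|\beta|=\lambda|\alpha|$. The hyperbolic region $|\beta|\approx s \geq \delta^{-1}$ of the lemma becomes $|\alpha|\approx \lambda^{1/2}t$ with $t\geq \delta^{-1}\lambda^{-3/2}$, and the pointwise bound $|\tilde W|+|\tilde Q|\lesssim \delta^{1/2}s^{-1/2}$ reproduces \eqref{wl-hyp} once one uses $\lambda^{1/2} s^{-1/2} = \lambda^{-1/4} t^{-1/2} \approx |\alpha|^{-1/2}$ and the analogous identity for $Q$. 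Similarly, the elliptic pointwise bound $|\tilde W|+|\tilde Q|\lesssim \min\{\delta,(s+|\beta|)^{-1}\}$ yields \eqref{wl-ell} using $s+|\beta|=\lambda(\lambda^{1/2}t+|\alpha|)$.

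Finally, for the weighted $L^2$ bound \eqref{wl-ell2}, I would inspect the proof of the lemma: in each of the non-hyperbolic cases (i)--(iii), combining the elliptic $L^2$ estimate established there with the baseline bound $\|(\tilde W,\tilde Q)\|_{L^2}\lesssim \delta$ actually yields the stronger weighted estimate $\|(\delta^{-1}+s+|\beta|)(\tilde W_{ell},\tilde Q_{ell})\|_{L^2}\lesssim 1$ on the corresponding region. Rescaling this estimate (noting that $\delta^{-1}+s+|\beta|$ becomes $\delta^{-1}+\lambda(\lambda^{1/2}t+|\alpha|)$ and that the $\lambda$ factors from the two components $W_{ell}$ and $\lambda^{-1/2}Q_{ell}$ match exactly the $L^2$ change of variables) produces \eqref{wl-ell2}. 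The only real obstacle is careful bookkeeping of the $\lambda$ factors in the various norm conversions; no further analytic input beyond the lemma is required.
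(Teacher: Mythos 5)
Your rescaling argument is correct and matches the paper's (unelaborated) ``rescaling the above result.'' The amplitude factors $\lambda^{-1/2}$ and $\lambda^{-1}$ are the right choice to convert the $\dH^0$ hypothesis into the lemma's $L^2\times L^2$ hypothesis, the chain-rule identities and the translations of the hyperbolic/elliptic bounds all check out, and your observation that the weighted $L^2$ estimate \eqref{wl-ell2} (which has no counterpart in the lemma's statement) is read off from the intermediate $L^2$ bounds in cases (i)--(iii) of the lemma's proof is exactly the extra step needed.
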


To obtain the conclusion of the Proposition we consider a dyadic
decomposition with respect to the dyadic frequency parameter
$\lambda$, 
\[
(W,Q) = \sum_{\lambda \ dyadic} (W_\lambda, Q_\lambda) ,
\]
and then a further decomposition into hyperbolic and elliptic components,
\[
(W_{hyp} ,Q_{hyp}) = \sum_{\lambda \ dyadic} (W_{\lambda,hyp}, Q_{\lambda,hyp}) .
\]
Then the conclusion of the proposition is obtained by doing dyadic
summation of the bounds in the above corollary with respect to the
dyadic frequency $\lambda$.

First we consider frequencies $\lambda$ which are less than $1$. Then
we do the summation with $\delta = 1$ and an extra $\lambda^{-\sigma}$
factor. The non-elliptic regions are essentially disjoint, and span
the range $t^{\frac23} \lesssim |\alpha| \lesssim t$.  There we get
\begin{equation} \label{ks1}
|W_{<1, hyp}| \lesssim t^{-\frac12} \at^{\frac12+\sigma} ,
\end{equation}
and also 
\begin{equation}\label{ks2}
|D^\frac12 W_{<1, hyp}|+ |Q_{< 1, hyp}| \lesssim  t^{-\frac12} \at^{- \frac12+\sigma} ,
\end{equation}
which only gets better for higher derivatives.
 In the elliptic case
we get the sum
\[
|W_{elliptic}| \lesssim \sum_{\lambda < 1} \lambda^{-\sigma} \min\left\{ \lambda^\frac12, \frac{1}{(|\alpha|+\lambda^\frac12 t) \lambda^\frac12} \right\}.
\]
The sum is comparable to the maximum of the summands, which is
attained at the balance point,
\[
\lambda(|\alpha| + \lambda^\frac12 t) = 1.
\]
That gives $\lambda = (|\alpha|+ t^\frac23)^{-1}$, therefore we obtain the pointwise bound
\begin{equation}\label{ks3}
|W_{< 1, ell}| \lesssim (|\alpha|+ t^\frac23)^{\sigma -\frac12} .
\end{equation}
The case of $Q$ or $D^\frac12 W$ is more favorable.
We get the same threshold for $\lambda$ if $|\alpha| < t$, but the bound
we get there is 
\begin{equation}\label{ks4}
|D^\frac12 W_{<1,ell}| + |Q_{< 1, ell}| \lesssim t^{-1}  \at^{1+\sigma} ,
\end{equation}
while for $|\alpha| > t$ we simply get $|\alpha|^{-1}$. Both of these are better than needed.
The bounds for higher derivatives of $W$ and $Q$  are better.

In a similar manner we have the $L^2$ bound 
\begin{equation}\label{ks5}
t \|D^{\sigma+2} W_{<1,ell} \|_{L^2} + 
\| \alpha D^{\sigma+1} W_{<1,ell} \|_{L^2}
+ \| D^\sigma W_{<1,ell} \|_{L^2}  \lesssim 1,
\end{equation}
and similarly for $D^{-\frac12} Q_{<1,ell}$.
In effect the previous pointwise bounds follow by  Bernstein's inequality from the 
$L^2$ bounds.

It remains to consider higher frequencies. There we work with the $L^2$ bound
but set $\delta = \lambda^{-9}$. Thus we obtain a very good  bound for the hyperbolic part,
\begin{equation}\label{ks6}
|D^{4+\frac12} W_{> 1, hyp}|+|D^4 Q_{> 1, hyp}| \lesssim t^{-\frac12}
\left(\frac{t}{\alpha}\right)^{10} , \qquad t > \alpha,
\end{equation}
which suffices.

For the elliptic part we have the bound
\begin{equation}\label{ks7}
|D^{3+\frac12} W_{> 1, ell}|+|D^3 Q_{>1,ell}| \lesssim \sum_{\lambda > 1} \lambda^{-3}\min\left\{ \lambda^\frac12, \frac{1}{(|\alpha|+\lambda^\frac12 t) \lambda^\frac12} \right\}
\lesssim (|\alpha|+t)^{-1},
\end{equation}
which yields the $\lambda = 1$ value, and is no worse than the low frequency case.  
We also have the $L^2$ bound
\begin{equation}\label{ks8}
\|t D^2 W_{> 1,ell}\|_{L^2}+ \| \alpha D W_{>1,ell}\|_{L^2} + \|D^6 W_{>1,ell}\|_{L^2} \lesssim 1.
\end{equation}
\end{proof}


\section{ The wave packet method and global pointwise bounds}
In order to establish the global pointwise decay estimates we use the
method of testing by wave packets, first introduced in paper \cite{IT}
in the context of the one dimensional cubic NLS equation, and later
used in \cite{TI} in the context of the two dimensional gravitational
water waves. 

Our starting point for the analysis is the same as in the previous
section, namely the estimates \eqref{en}-\eqref{Len}. Consequently, we
can also use the conclusion of the previous section, namely the bounds
\eqref{point-ttW}-\eqref{point-diff}. For some portion of the arguments,
we will also rely on the decomposition of $(\ttW,\ttQ)$ into elliptic and hyperbolic
components, arising in the proof of Proposition~\ref{p:K-S}.

We succinctly revisit some details of the wave packet method; we pick a ray
$\{\alpha = v t\}$ and establish decay along this ray by testing with a
wave packet moving along the ray. A wave packet is an approximate
solution to the linear system \eqref{ww-lin}, with $O(1/t)$ errors.

To motivate the definition of this packet we recall some
useful facts. In view of the dispersion relation $\tau = \pm |\xi|^{\frac{3}{2}}$,
a ray with velocity $v$ is associated with waves which have 
spatial frequency 
\[
\xi_v = - \frac{4}{9}v^2.
\]
Secondly, for waves with initial data localized at the origin,
the spatial frequency corresponding with a position $(\alpha,t)$ is 
\[
\xi(\alpha,t) = -\frac{4}{9}\frac{\alpha ^2}{t^2}.
\]
This is associated with the phase function
\[
\phi(t,\alpha) = -\frac{4}{27}\frac{\alpha^3}{t^2}.
\]

Then our wave  packets will be combinations of functions of the form
\[
\uu(t,\alpha) = v^{-\frac12} \chi\left(\frac{\alpha - vt}{t^{\frac12} v^{-\frac12}}\right)e^{i\phi(t,\alpha)},
\]
where $\chi$ is a smooth compactly supported bump function with integral one
\begin{equation}\label{chi-int}
\int \chi (y)\, dy =1.
\end{equation}
Our packets are localized around the ray $\{\alpha = v t\}$ on the
scale $\delta \alpha = t^{\frac{1}{2}} v^{-\frac{1}{2}}$. 
We further remark that there is a threshold
$v \approx t$ above which $\phi$ is essentially zero, and the above considerations
are no longer relevant. We confine our analysis to the region where $\phi$ is strongly
oscillatory, 
\begin{equation*}
|v| \gg  t^{-\frac16}.
\end{equation*}
The power $\frac16$ here is somewhat arbitrary, any choice less than $\frac13$ would do. 
Under this assumption, the function $\uu$ is strongly localized at frequency $\xi_v$.

We apply the method of testing by wave packets which we successfully
used for the water wave equation in \cite{TI}, and for the cubic NLS
in \cite{IT}. Due the fact that we are dealing with a system, and we
need to choose the two components to match.  Our system is simple
enough, so is suffices to first choose the $Q$ component and then use
the first of the two linear equations in \eqref{ww-lin} to match $W$,
\begin{equation*}
(\ww,\qq) = -\frac{27 i}{8v^2} ( -  \uu_{\alpha},  \uu_{t}).  
\end{equation*}

Then we have
\begin{equation}
\label{wq-bold}
\begin{cases}
& \!\!\! \!\!\! \ww =   \dfrac{3\alpha ^2}{2t^2 v^3}\uu+ \dfrac{27 i}{8v^2}\dfrac{1}{t^{\frac{1}{2}}v^{-1}}\chi ' \left(\dfrac{\alpha - vt}{t^{\frac12} v^{-\frac12}}\right)e^{i\phi} =\dfrac{3}{2v} \uu + O(v^{-\frac12}t^{-\frac12})e^{i\phi}
\\ & 
\\
& \!\!\! \!\!\! \qq=  \dfrac{\alpha^3}{t^3v^3} \uu +\dfrac{27 i}{8v^2} \dfrac{\alpha +vt}{2t^{\frac{3}{2}}v^{-1}}\chi'\left(\dfrac{\alpha - vt}{t^{\frac12} v^{-\frac12}}\right) e^{i\phi}= \uu + O(v^{\frac12} t^{-\frac12})e^{i\phi}.
\end{cases}
\end{equation}
The remainder terms on each of the two lines above are also bump
functions on the $t^{\frac12} v^{-\frac12}$ scale, and are better by a
$v^\frac12 t^{-\frac12}$ factor.  Because of this, they will play a
negligible role in most of our analysis. However, we cannot discard
them as they are crucial in improving the error in the second linear
equation in \eqref{ww-lin}, which is given by
\begin{equation}
\label{g-eq}
\ggg  := \partial_t \qq +i\partial^2_\alpha \ww = -\frac{27 i}{8v^2} (\partial^2_t - i \partial_{\alpha}^3) \uu .
\end{equation}
 Indeed,  computing the  error in \eqref{g-eq} we obtain
\begin{equation}
\label{erori}
\begin{aligned}
\ggg =e^{i\phi} \partial_{\alpha}\left[ \frac{v^\frac12(\alpha-vt)}{t}  \chi- \frac{ 9i}{8 t^\frac12}\chi '\right]+ O(t^{-\frac{3}{2}}).
 \end{aligned}
\end{equation}
The first term is the leading one, and, as expected, has size $t^{-1}$
times the size of $\qq$; the remaining terms are better than $t^{-1}$
by an additional factor of $t^{-\frac{1}{2}}$ which shows that the
choice of a such wave packet is a reasonable approximate solution for the linear system. Precisely, as in \cite{TI}, our
test packets $(\ww,\qq)$ are good approximate solutions for the linear
system associated to our problem only on the dyadic time scale $\delta
t\leq t$.

The outcome of testing the normal form solutions to the water wave
system with the wave packet $(\ww,\qq)$ is the scalar complex valued
function $\gamma(t,v)$, defined in the region $\{|v| \leq
t^{\delta}\}$, where $\delta \ll 1 $:
\begin{equation*}
\gamma(t,v) = \frac12 \langle (\ttW,\ttQ),(\ww,\qq)\rangle_{\dot{H}^{\frac{1}{2}}\times L^2},
\end{equation*}
which we will use as a good measure of the size of $(\ttW,\ttQ)$ along
our chosen ray.  Here it is important that we use the complex pairing
in the inner product. 

Now we have two
tasks. Firstly, we need to show that $\gamma$ is a good representation of the 
pointwise size of $(\ttW,\ttQ)$ and their derivatives:

\begin{proposition}
\label{p:diff}
Assume that \eqref{NFen} and \eqref{Len} hold. Then in
$\Omega:=\left\lbrace t^{-\delta}\leq \frac{\vert \alpha \vert
  }{t}\leq t^{\delta}\right\rbrace $ we have the following bounds for
$\gamma$:
\begin{equation}\label{gamma}
\begin{split}
\|(1+ v^{2})^{8}\gamma \|_{L^2_v}  + \|v(1+v^2)^{-\frac{1}{2}} \partial_v\gamma \|_{L^2_v} +  \|v^\frac12 (1+ v^2)^{\frac72}\gamma \|_{L^\infty}  \lesssim \epsilon t^{C^2_{*}\epsilon^2},
\end{split}
\end{equation}
as well as the approximation bounds for $(\ttW,\ttQ)$ and their derivatives:
\begin{equation} \label{pactest}
\begin{split}
&(|D|^{s+\frac12} \ttW,|D|^s \ttQ) (t,vt) =  \ |\xi_v|^{s}
t^{-\frac12} e^{i\phi(t,vt)} \gamma(t,v) (\sgn{v}, 1) + \err_s, 
\end{split}
\end{equation}
where
\begin{equation}\label{pacerr}
\begin{split}
&\|(1+v^{2})^{4-s} \err_{s}\|_{L^2_v}  + \|(1+v^{2})^{4-s}  \err_s\|_{L^\infty} \lesssim  
\epsilon t^{-\frac58 +C_{*}^2\epsilon^2}, \qquad 0 \leq s \leq 4.
\end{split}
\end{equation}
\end{proposition}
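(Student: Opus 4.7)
The strategy mirrors the wave packet testing method from the authors' earlier work \cite{IT,TI}, and splits naturally into two parts: establishing the approximation formula \eqref{pactest}, then deriving the weighted bounds \eqref{gamma} for $\gamma$. For the approximation formula, I would first note that $(\ww,\qq)$ is concentrated in phase space at $(\alpha,\xi)=(vt,\xi_v)$ with uncertainty $(t^{1/2}v^{-1/2},\ v^{1/2}t^{-1/2})$, and that the phase $e^{i\phi(t,\alpha)}$ matches the instantaneous frequency of solutions to \eqref{ww-lin} passing through this point. Using \eqref{wq-bold}, at leading order $|D|^{1/2}\ww \approx \sgn(v)\uu$ and $\qq \approx \uu$, so $\gamma$ essentially extracts a wavelet-like coefficient of $\sgn(v)|D|^{1/2}\ttW + \ttQ$ at position $vt$. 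The plan is then to decompose $(\ttW,\ttQ)$ dyadically in frequency, invoking the hyperbolic/elliptic splitting from the proof of Proposition~\ref{p:K-S}. Frequency components away from $\xi_v$ contribute only through the tail of $\uu$ in either space or frequency, both of which decay rapidly; frequencies near $\xi_v$ produce the main term plus amplitude corrections of order $v^{1/2}t^{-1/2}$ visible in \eqref{wq-bold}. The pointwise bounds \eqref{point-ttW}--\eqref{point-tt} handle the elliptic region, while the $\dH^\sigma \cap \dH^8$ control from \eqref{NFen} takes care of the hyperbolic one, producing the stated error rate $t^{-5/8+C_*^2\epsilon^2}$.

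For the $L^2$ bound on $\gamma$ in \eqref{gamma}, an approximate Plancherel identity in $v$, viewing $v\mapsto (\ww_v,\qq_v)$ as a continuous wavelet family indexed by frequency $|\xi_v|\sim v^2$, combined with Cauchy--Schwarz, gives $\|(1+v^2)^8\gamma\|_{L^2_v} \lesssim \|(\ttW,\ttQ)\|_{\dH^8}$; then \eqref{NFen} closes this estimate. For the $\partial_v$ bound, I would compute $\partial_v(\ww,\qq)$ directly from the explicit formulas and rewrite its leading contributions, modulo lower order errors, as a scaling vector field acting on a rescaled wave packet. Integrating by parts against the normal form equation \eqref{nf-final} then expresses $\partial_v \gamma$ in terms of pairings involving $\SS(\ttW,\ttQ)$, which are controlled via \eqref{Sen} and \eqref{SNFen} modulo $\dH^\sigma$ tails dealt with via \eqref{NFeq}; the weight $v(1+v^2)^{-1/2}$ on the left emerges naturally from this commutator identity. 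The $L^\infty$ bound then follows by the one-dimensional Sobolev embedding $\|f\|_{L^\infty_v}^2 \lesssim \|f\|_{L^2_v} \|\partial_v f\|_{L^2_v}$ applied with the appropriate weights.

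The hard part will be obtaining the sharp error exponent $t^{-5/8}$ in \eqref{pacerr}, which is strictly better than the $t^{-1/2}$ one would get from a naive Cauchy--Schwarz against the energy. The gain of $t^{-1/8}$ must come from the $\dH^\sigma$ regularity (with $\sigma < 1/8$) combined with the $v^{1/2}t^{-1/2}$ frequency width of the wave packet, which expresses that within the packet's frequency scale $(\ttW,\ttQ)$ is effectively slowly varying; careful bookkeeping through the hyperbolic/elliptic decomposition will be required to make this precise. A secondary subtle point is making the commutation between $\partial_v$ on packets and $\SS$ on $(\ttW,\ttQ)$ effective despite the frequency-dependent spatial scale of the packet, so that the errors generated in this identification are genuinely below the $\dH^\sigma \cap \dH^1$ control provided by \eqref{Sen}.
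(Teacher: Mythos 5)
Your proposal follows the paper's overall strategy — simplify $(\ww,\qq)$ to its leading part $\uu$, invoke the hyperbolic/elliptic dyadic decomposition from Proposition~\ref{p:K-S}, observe that elliptic and frequency-mismatched pieces decay rapidly, and obtain the $L^\infty$ estimate on $\gamma$ from the two $L^2$ bounds by Cauchy–Schwarz. Two points, however, diverge from what the paper actually does and the second is a real gap in the reasoning.

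First, for the $\partial_v\gamma$ bound you propose to compute $\partial_v(\ww,\qq)$, recognize a scaling vector field, and integrate by parts against the normal form equation so as to land on pairings with $\SS(\ttW,\ttQ)$, controlled via \eqref{Sen}, \eqref{SNFen}, \eqref{NFeq}. But those are not the proposition's hypotheses — the hypotheses are \eqref{NFen} and \eqref{Len}, and \eqref{Len} already encodes the $\SS$-information in exactly the form needed. The paper's argument is therefore much shorter: it notes the commutation identity that $tL\uu - \partial_v\uu$ has the same size and bump-function structure as $\uu$ (where $L = \partial_\alpha - \tfrac{4i\alpha^2}{9t^2}$ is the operator killing the phase $e^{i\phi}$), so that integrating by parts transfers $L$ onto $(\ttW,\ttQ)$ where it is controlled directly by \eqref{Len}. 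Your route would require re-deriving \eqref{Len} inside the proof, which is circular with respect to the stated hypotheses and would need to be restructured.

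Second, and more seriously, your attribution of the $t^{-1/8}$ gain in \eqref{pacerr} to the $\dH^\sigma$ regularity is wrong. The $\dH^\sigma$ component of the norm is there to handle the very low frequencies $\lambda \lesssim 1$ and, if anything, costs a small factor $\lambda^{-\sigma} \lesssim t^{2\delta\sigma}$ near the edge $v \sim t^{-\delta}$ of $\Omega$; it does not produce a gain. The actual mechanism is the one you gesture at in your last clause but do not substantiate: the function $u := e^{-i\phi}\ttQ_{\lambda,hyp}$ is slowly varying on the packet's spatial scale $t^{1/2}v^{-1/2}$, and this is quantified by the identity $u_\alpha = e^{-i\phi}L\ttQ_{\lambda,hyp}$ together with the $L^2$ bound on $L(\ttW_{\lambda,hyp},\ttQ_{\lambda,hyp})$ coming from \eqref{Len} (which is smaller than $\|\ttQ_{\lambda,\alpha}\|_{L^2}$ by a factor $\sim t^{-1}$). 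One then compares $\ttQ_\lambda(vt)$ with the packet average by the fundamental theorem of calculus and Cauchy–Schwarz over the width of $\chi$. Without this specific identity and the use of \eqref{Len}, there is no mechanism in your proposal to produce the claimed $t^{-5/8}$ decay, and a naive Cauchy–Schwarz against \eqref{NFen} alone would indeed give only $O(1)$ relative to $\gamma$. You should replace the $\dH^\sigma$ heuristic by the FTC argument driven by \eqref{Len}.

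Finally, the reduction from the two-component pairing to a pairing with $\ttQ$ alone (or $\ttW$ alone), which the paper carries out using the off-diagonal estimate \eqref{app2} derived from \eqref{Len}, is missing from your outline; it is needed to keep the analysis scalar.
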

By virtue of the difference bounds \eqref{point-diff}, the estimates 
\eqref{pactest}-\eqref{pacerr} also hold with $(\ttW,\ttQ)$ replaced by $(W,Q)$.

Secondly, we need to show that $\gamma$ stays bounded,  which we do by 
establishing an asymptotic differential equation for it:
\begin{proposition}
\label{p:ode}
Assume that the bounds  \eqref{en}-\eqref{Len} hold.
Then within the set $\Omega$ the function $\gamma$ solves an asymptotic ode of the form 
\begin{equation}\label{gamma-ode}
\dot\gamma = -  \frac{8i v^5}{81 t }   \gamma |\gamma|^2 + err_{\gamma}, 
\end{equation}
where $err_\gamma$ satisfies the $L^2$ and $L^\infty$ bounds
\begin{equation}\label{sigma}
 \| (1+v^{2})^{4} err_\gamma\|_{L^2}+ \| (1+v^{2})^{4} err_\gamma\|_{L^\infty}  \lesssim \epsilon
  t^{-\frac{19}{18}}.
\end{equation}
\end{proposition}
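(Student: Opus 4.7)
The plan is to differentiate $\gamma$ in time, substitute the equations for both $(\ttW,\ttQ)$ and $(\ww,\qq)$, and isolate the cubic resonant contribution from the errors. Using \eqref{nf-final} and the fact that $(\ww,\qq)$ satisfies $\ww_t+\qq_\alpha=0$ exactly and $\qq_t+i\ww_{\alpha\alpha}=\ggg$ with $\ggg$ as in \eqref{erori}, a Plancherel-type computation gives
\[
2\dot\gamma = \langle \ttG,\ww\rangle_{\dot H^{\frac12}} + \langle \ttK,\qq\rangle_{L^2} + \langle \ttQ,\ggg\rangle_{L^2},
\]
where the leading linear terms cancel exactly (this parallels the computation in \cite{TI} for gravity waves). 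The remaining task is to identify which part of the right-hand side yields the ODE coefficient, and which can be absorbed into $err_\gamma$.

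First I would dispose of the quartic and higher contributions coming from $(\ttG^{(4+)},\ttK^{(4+)})$. These are estimated directly by combining \eqref{tt-gk4} with the $O(1)$ bound on $\|(\ww,\qq)\|_{\dH^{\frac12}}$ and the pointwise bound \eqref{point-ttW} on $\ttW$, which yields decay well beyond $t^{-19/18}$ with polynomial room for the $(1+v^2)^4$ weight. Next, the wave packet error $\langle \ttQ,\ggg\rangle_{L^2}$ is handled by moving the $\partial_\alpha$ in \eqref{erori} onto $\ttQ$ and invoking the approximation \eqref{pactest} for $\ttQ_\alpha$ at $\alpha=vt$; the leading $t^{-1}$ factor in \eqref{erori}, combined with a $t^{-1/2}$ from integrating a bump of width $t^{1/2}v^{-1/2}$ against a slowly varying $\gamma$, produces an error of the required form after the residual oscillations in $v$ are integrated.

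The heart of the argument will be the cubic contribution from $(\tilde\tG^{(3)},\tilde\tK^{(3)})$ displayed in \eqref{ttgk-3}. Following the template of \cite{IT,TI}, I would dyadically localize each of the three input frequencies and split the cubic expressions into resonant and non-resonant interactions. Non-resonant interactions carry an extra phase whose $\alpha$-derivative is at least $|\xi_v|\sim v^2$ (by the trilinear resonance analysis underlying \eqref{nonres-tri}); repeated integration by parts in $\alpha$, together with the frequency-localization of $(\ww,\qq)$ and the weighted derivative bound \eqref{gamma} on $\partial_v\gamma$, yields the extra $t^{-1/2}$ decay needed to absorb them into $err_\gamma$. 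For resonant interactions all three input frequencies concentrate near $\xi_v=-\frac49 v^2$ with the sign pattern $\uu\bar\uu\uu$ (so the total phase matches the test packet); substituting \eqref{pactest} for each factor collapses the integrand to $|\xi_v|^{k}t^{-3/2}|\gamma|^2\gamma$ times a product of packet bumps whose integral is $1$ by \eqref{chi-int}. Summing the explicit symbol values of $(G^{(3)},K^{(3)})$, of the quadratic normal form cross-contributions carried in \eqref{gkt} through Proposition~\ref{p:nf}, and of the cubic corrections $(W_{[3]},Q_{[3]})$ from Proposition~\ref{p:nf-cubic}, with $\xi_v=-\frac49 v^2$, is what is expected to produce precisely the coefficient $-\frac{8iv^5}{81 t}$.

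The main obstacle is this last explicit computation and cancellation: $(\tilde\tG^{(3)},\tilde\tK^{(3)})$ is built out of the original cubic nonlinearity, of bilinear interactions of $(G^{(2)},K^{(2)})$ with the normal form symbols of Proposition~\ref{p:nf}, and of the $hhh\to l$ and $lhh\to h$ cubic corrections of Proposition~\ref{p:low-en}. Each term must be tested against the wave packet with careful bookkeeping of conjugation parities, since only two of the eight possible sign patterns are resonant, and the numerical value of the resonant symbol at $\xi_1=\xi_3=\xi_v$, $\xi_2=-\xi_v$ must be tallied cleanly. A secondary difficulty is maintaining the weighted $(1+v^2)^4$ control in both $L^2$ and $L^\infty$: since $|\xi_v|$ grows like $v^2$, each derivative landing on a factor in the cubic forms costs a power of $v$, so the highest-derivative cubic term is the most delicate at large $v$ and will require the full strength of the remainder bound \eqref{pacerr}.
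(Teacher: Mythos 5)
Your proposal follows the same skeleton as the paper's proof: the same starting identity for $\dot\gamma$, the same disposal of the quartic terms via \eqref{tt-gk4} and of the $\bar\ggg$-term via integration by parts and \eqref{pactest}--\eqref{pacerr}, and the same parity-based split of the cubic terms with nonstationary phase for the wrong-parity combinations and a tally of the symbol values at $\xi_1=\xi_2=\xi_3=\xi_v$ for the resonant ones. Two points in your sketch diverge from the paper and deserve attention. First, the paper introduces a third category beyond ``resonant'' and ``non-resonant'', namely the \emph{null} terms: trilinear expressions of the right parity (one conjugation) whose symbol \emph{vanishes} on the diagonal $\xi_1=\xi_2=\xi_3$. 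These are precisely the terms in $(\tG^{(3)},\tK^{(3)})$ whose quadratic building block is the mixed part $G^{(2)}$ or $K^{(2),a}$, which vanish identically after the substitution from \eqref{pactest}. This observation is what makes the tally tractable: it removes most of the entries in \eqref{gkt} in one stroke, leaving only the output of the holomorphic piece $K^{(2),h}$ through $C^a$ and $D^a$ plus the purely holomorphic and $(2,1)$ parts of the original $K^{(3)}$, $G^{(3)}$. Moreover, the error control for null terms in the balanced matched regime is \emph{not} a formal consequence of substituting \eqref{pactest} factor by factor; the paper's Lemma handling them decomposes $M=\sum(\xi_i-\xi_j)M_{ij}$ and gains from the improved $L^2$ bound on $L\ttQ_{\lambda,hyp}$, rather than from the pointwise approximation. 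Second, your claim that the cubic corrections $(W_{[3]},Q_{[3]})$ from Proposition~\ref{p:nf-cubic} must be included in the coefficient tally is misdirected: those corrections are by construction supported on the $lhh\to h$ and $hhh\to l$ frequency configurations, which are non-resonant, so they contribute only to $err_\gamma$; the resonant part of $(\ttG^{(3)},\ttK^{(3)})$ coincides with that of $(\tG^{(3)},\tK^{(3)})$. Your heuristic ``substitute and tally'' recipe would, if carried out with full bookkeeping, uncover both of these facts, but without them stated in advance the derivation of $-\frac{8iv^5}{81t}$ becomes unnecessarily delicate, and the null-term error control would need a separate argument of the $L$-operator type.
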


We now use the two propositions to conclude the proof of \eqref{point}, and thus the proof 
of Theorem~\ref{t:almost}. 
By virtue of \eqref{pactest} and \eqref{pacerr}, in order to prove \eqref{point}
it suffices to establish its analogue for $\gamma$, namely 
\begin{equation}\label{need}
|\gamma(t,v)| \lesssim \epsilon (1+v^{2})^{-4} \qquad \text{  in  $\Omega$}.
\end{equation}
On the other hand,  from \eqref{point-tt}, we directly obtain
\begin{equation}\label{have}
|\gamma(t,v)| \lesssim \epsilon (1+v^{2})^{-4} \omega(v,t) t^{C_*^2 \epsilon} \text{ in $\Omega$}.
\end{equation}
Our goal now is to use the ode \eqref{gamma-ode} in order to pass from 
the bound \eqref{have} to \eqref{need} along rays $\alpha = vt$. We consider three cases for $v$:
\medskip

{\bf (i)}  Suppose first that
$v \approx 1$, i.e., $|\alpha| \approx t$. Then we initially have 
\[
|\gamma(t)| \lesssim \epsilon, \qquad t \approx 1.
 \]
Integrating \eqref{gamma-ode} we conclude that 
\[
|\gamma(t)| \lesssim \epsilon, \qquad t \geq 1,
\]
and then \eqref{need} follows.

\medskip

{\bf (ii)} Assume now that $v \ll 1$, i.e., $|\alpha| \ll t$. Then, as $t$ increases, the ray $\alpha = vt$
enters $\Omega$ at some point $t_0$ with $v \approx t_0^{-\frac19}$. Then by 
 \eqref{have}, if $\epsilon$ is small enough then we obtain
\[
|\gamma(t_0,v)| \lesssim \epsilon v^{\frac12}  t_0^{C_*^2 \epsilon}  \lesssim \epsilon.
\] 
We use this to initialize $\gamma$. For larger $t$ we use \eqref{gamma-ode}
to conclude that 
\[
|\gamma(t)| \lesssim \epsilon + \int_{t_0}^\infty \epsilon  
s^{-\frac{19}{18}+C\epsilon^2} ds \approx 
 \epsilon  (1+ t_0^{-\frac1{18}+C^2_{*}\epsilon^2 }) \lesssim \epsilon, \qquad t > t_0.
\]
Then  \eqref{need} follows.

\medskip

{\bf (iii)} Finally, consider the case $v \gg 1$, i.e., $|\alpha| \gg t$.
Again, as $t$ increases, the ray $\alpha = vt$ enters $\Omega$ at some point $t_0$ with
 $v \approx t_0^{\frac19}$, therefore by \eqref{have} we obtain
\[
|\gamma(t_0,v)| \lesssim \epsilon  (1+v^2)^{-4} \omega t_0^{C_*^2 \epsilon}  \lesssim \epsilon
 (1+v^2)^{-4}.
\] 
We use this to initialize $\gamma$. For larger $t$ we use \eqref{gamma-ode}
to conclude that 
\[
|\gamma(t)| \lesssim(1+v^2)^{-4}( \epsilon  + \int_{t_0}^\infty \epsilon s^{-\frac{19}{18}+C^2_{*}\epsilon^2})   \lesssim \epsilon(1+v^2)^{-4} ,  \qquad t > t_0.
\]
Then  \eqref{need} again follows.
\bigskip

The remainder of the paper is devoted to the proof of the two propositions above.

\begin{proof}[Proof of Proposition~\ref{p:diff}]
Here we prove Proposition~\ref{p:diff}, using the estimates \eqref{ks1}-\eqref{ks8}
in the proof of Proposition~\ref{p:K-S}. We write $\gamma$ as
\[
\gamma = \frac12 \int -i\ttW_{\alpha}\bar \ww + \ttQ \bar \qq \, d\alpha .
\]
The two terms in the integral are considered separately in the proof of \eqref{gamma},
and the exact form of $(\ww,\qq)$ is not important.
Further, for the purpose of proving \eqref{pacerr} we can simplify somewhat the
expression of $\gamma$. The lower order terms in $(\ww, \qq )$ in
\eqref{wq-bold} are better by a factor of $v t^{-\frac12}$,
therefore we can readily replace $(\ww, \qq )$ by their leading part, modulo errors
which satisfy bounds which are $v t^{-\frac12}$ better than \eqref{gamma}-\eqref{pacerr}. In view of these considerations, it suffices to
prove Proposition~\ref{p:diff} with $\gamma$ redefined as
\begin{equation}
\label{rewrite-gamma}
\gamma(t,v) =  \frac12 \int \left( -\dfrac{3i}{2v} \ttW_{\alpha}+ \ttQ\right)  \bar{\textbf{u}} 
\, d\alpha.
\end{equation}
We split $(\ttW,\ttQ)$ into  hyperbolic and elliptic components, as in the 
proof of Proposition~\ref{p:K-S}. The elliptic components which are spatially 
localized near $\alpha = vt$ have frequencies away from $v^2$, therefore, given the 
localization of $\uu$, we have
\[
  \int \left(-\dfrac{3i}{2v}  \ttW_{ell,\alpha}+\ttQ_{ell}\right)  \bar{\textbf{u}} 
\, d\alpha = O(t^{-N}).
\]
It remains to consider the contributions from the hyperbolic components
\[
(\ttW_{hyp},\ttQ_{hyp}) = \sum_{\lambda\ dyadic}(\ttW_{\lambda, hyp},\ttQ_{\lambda,hyp}) .
\]
This is a diagonal sum in that each the terms are supported in
essentially disjoint dyadic regions $\lambda \approx v^2$. Hence,
modulo $t^{-N}$ errors, it suffices to fix the dyadic frequency
$\lambda$, and restrict $\alpha$ to the region $\alpha^2/t^2 \approx
\lambda$, which corresponds to $v^2 \approx \lambda$.  We note that
the $L^2$ square summability with respect to $\lambda$ in
\eqref{NFen}, \eqref{Len} yields the corresponding summation with
respect to dyadic spatial regions in the proposition.

For $(\ttW_{\lambda, hyp},\ttQ_{\lambda,hyp})$ we have the following bounds 
which are consequences of \eqref{NFen}, \eqref{Len}:

\begin{equation}\label{app1}
\|  (v \ttW_{\lambda,hyp}, \ttQ_{\lambda,hyp})\|_{L^2} \lesssim (1+v^2)^{-5} \epsilon t^{C \epsilon^2},
\end{equation}
\begin{equation}\label{app2}
\|  (v(\ttQ_{\lambda,hyp} - \frac{2\alpha}{3t} \ttW_{\lambda,hyp}) , i \ttW_{\lambda,hyp,\alpha} -  
\frac{2\alpha}{3t} \ttQ_{\lambda,hyp})\|_{L^2} \lesssim
 \frac{(1+v^2)^\frac12}{tv^2} \epsilon t^{C \epsilon^2}.
\end{equation}
Combining the two components in the last equation and using 
again the frequency localization, we obtain also 
\begin{equation}\label{app3}
\| L (v \ttW_{\lambda,hyp}, \ttQ_{\lambda,hyp})\|_{L^2} \lesssim \frac{(1+v^2)^\frac12}{t v}  
\epsilon t^{C \epsilon^2},
 \end{equation}
where we recall that 
\[
L = \partial_\alpha - i \frac{4 \alpha^2}{9 t^2} 
\]
is the operator which cancels the phase $e^{i\phi}$.

 Now it is very easy to prove \eqref{gamma}. 
The first bound \eqref{app1} leads immediately to the first $L^2$ estimate in
\eqref{gamma}, while \eqref{app3}  yields the second $L^2$ estimate in
\eqref{gamma}.  The latter fact is easily seen by integration by
parts, after noting that $t L \uu - \partial_v \uu$ has the same 
form and size as $\uu$. Finally, the pointwise bound in \eqref{gamma} 
follows directly by Cauchy-Schwarz inequality from the two $L^2$ bounds.

It remains to prove the error bound in \eqref{pacerr}.  For that we 
first reduce the problem to either $\ttW$ or $\ttQ$. 
For that we use the two relations in \eqref{app2} to conclude that 
$\gamma$ as defined in \eqref{rewrite-gamma} is given by 
\[
\gamma =  \int  \ttQ_{\lambda,hyp} \bar \uu \, d \alpha + O(t^{-\frac14})
= \int - \frac{2i}{3v} \ttW_{\lambda,hyp} \bar \uu \, d \alpha + O(t^{-\frac14}).
\]
We first prove the $\ttQ_{\lambda,hyp}$ component of the bound \eqref{pacerr} for $s=0$.
For this it suffices  to estimate the difference 
\[
\err_0 =  e^{-i\phi(t,vt)} \ttQ_\lambda(t,vt) - t^{-\frac12} \int \ttQ_\lambda(t,\alpha) 
\bar \uu(\alpha) \, d\alpha,
\]
and show that 
\begin{equation}\label{err0}
\| \err_0\|_{L^2} \lesssim t^{\frac12 }v^{-\frac12} \| L \ttQ_\lambda(t,vt))\|_{L^2},
\qquad \| \err_0\|_{L^\infty} \lesssim t^{\frac14} v^{-\frac14} \| L \ttQ_\lambda(t,vt))\|_{L^2}.
\end{equation}
Denoting $u(t,\alpha) = e^{-i\phi(t,\alpha)} \ttQ_\lambda(t,\alpha))$ so that
$u_\alpha = e^{-i\phi(t,\alpha)} L \ttQ$, we rewrite the above difference as 
\[
\err_0
= t^{-\frac12} \int  ( u(t,vt) -  u(t,\alpha))\overline{\uu(t,\alpha) e^{-i\phi(t,\alpha)}} 
\, d \alpha.
\]
Then the bound \eqref{err0} immediately follows by writing the first difference
in terms of the derivative of $u$ (see also the more detailed computations in \cite{TI,IT}).
To deal with  with other values of $s$ one applies \eqref{err0} to $D^s\ttQ$,
and computes
\[
\int |D|^s \ttQ \ \bar \uu \, d\alpha =  \int\ttQ \  \overline{ |D|^s  \uu} \, d\alpha ,
\]
observing that the difference
\[
 |D|^s  \uu - |\xi_v|^s \uu
\]
has the same form as $\uu$ but has size better by a $t^{-\frac12}
v^\frac12$ factor.  The argument for $\ttW$ is entirely similar.
Again, we refer the reader to the more detailed computations in
\cite{IT}.
\end{proof}

\begin{proof}[Proof of Proposition~\ref{p:ode}]
 In view of the $\dH^\frac12$ energy conservation for
the linear system \eqref{ww-lin}, we directly obtain the relation
\[
\dot \gamma(t) = \int i \ttG \bar \ww_\alpha +  \ttK \bar \qq + \ttQ \bar \ggg   
 \, d\alpha  .
\]
We successively consider all terms on the right. With the exception of a single 
term, namely the resonant part of $G$, see below, all contributions will be placed 
into the error term. 
\bigskip

{\bf A. The contribution of $\bar \ggg$.}  This is 
\[
I_1 =  \int \ttQ e^{-i\phi}  \left(\partial_{\alpha}\left[ \frac{v^\frac12(\alpha-vt)}{t}  \chi+ \frac{ 9i}{8 t^\frac12}\chi '\right]+ O(t^{-\frac{3}{2}})\right) \, d\alpha.
\]
The contribution of the $O(t^{-\frac32})$ error term is estimated
directly by Cauchy-Schwarz inequality.  Also, by \eqref{pactest} we can replace $\ttQ$
in terms of $\gamma$, and the contribution of the error $\ttQ -
t^{-\frac12}e^{i\phi} \gamma(t,v)$ is directly estimated in both $L^2$
and $L^\infty$ via \eqref{pacerr}.  The contribution of $\gamma$, on
the other hand, is written using integration by parts as
\[
\tilde I_1 =  t^{-\frac12} \int  - \gamma_\alpha \left[ \frac{v^\frac12(\alpha-vt)}{t}  \chi+ \frac{ 9i}{8 t^\frac12}\chi '\right] \, d\alpha.
\]
Now we can easily bound the last expression using \eqref{gamma}.

\bigskip

{\bf B. The contributions of $\ttG,\ttK$.} The quartic terms admit the better $L^2$ 
bound in \eqref{tt-gk4}, so their contributions are estimated by Cauchy-Schwarz inequality.
It remains to consider their cubic parts, which are described in \eqref{ttgk-3}.

The analysis of the cubic terms will be unnecessarily long if one
wants to list all the terms. Instead we adopt the same strategy as in
\cite{TI} and classify them in \emph{resonant}, \emph{non-resonant} and
\emph{null} terms. For this we need the following heuristic analysis:

\begin{itemize}
\item[(i)] in the physical space waves at frequency $\xi$ move with
  velocity $\pm \frac32 |\xi|^{\frac12}$. Since our data is localized
  near the origin, it follows that the bulk of the solution at
  $(\alpha,t)$ is at space-time frequency $(\xi,\tau) =
  \left(-\dfrac{4\alpha ^2}{9t^2}, \dfrac{2\alpha}{3t}\right)$. Thus the
  worst cubic interactions are those of waves with equal frequency.
\item[(ii)] In the frequency space, trilinear interactions of equal frequency waves 
$(\xi,\pm \sqrt{ |\xi|})$ (with $ \xi < 0$) can only lead back to the characteristic set if 
exactly one complex conjugation is present.  
\end{itemize}

This leads to the following classification of the cubic terms:
\begin{itemize}
\item[A.] Non-resonant trilinear terms: these are either 
(A1) terms with no complex conjugates, or  
(A2) terms with two complex conjugates.

\item[B.] Resonant trilinear terms: terms with exactly one
  conjugation. As in \cite{TI} for such terms one may further define a notion of
  principal symbol, which is the leading coefficient in the expression
  obtained by substituting the factors in the trilinear form by the
  expressions in \eqref{pactest} \footnote{Which corresponds to all
    three frequencies being equal.}. Thus one can isolate a linear subspace
  of resonant terms for which this symbol vanishes, which we call {\em
    null terms}. Hence on the full class of resonant trilinear terms
  we can further define an equivalence relation, modulo null terms.
\end{itemize}

Based on this, we seek to classify the terms in $(\ttG^{3}(W,Q),
\ttK^{3}(W,Q))$.  To avoid cumbersome notations, we first classify the
terms in $(\tilde{G}^{3}(W,Q), \tilde K^{3}(W,Q))$. As we will see, 
the additional terms in the difference of the two do not contribute at all to the 
asymptotic equation, but only to the error term.

We begin our discussion with the quadratic terms in $G^{(2)}$ and
$K^{(2)}$.  $G^{(2)}$ contains only mixed terms, while $K^{(2)}$
contains both holomorphic and mixed terms. We decompose the latter
into a holomorphic and a mixed part,
\[
 K^{(2)} = K^{(2),h}  + K^{(2),a}  .
\] 
The holomorphic part is 
\[
K^{(2),h} =  -  Q_\alpha^2  
+\frac{3i}2   W_{\alpha \alpha} W_\alpha, 
\] 
while the mixed parts are given by 
\begin{equation*}
\left\{
\begin{aligned}
&G^{(2)} = \   P\left[ Q_\alpha \bar W_\alpha - \bar Q_\alpha W_\alpha \right] \\
&K^{(2),a}   =  \  P\left[ - |Q_\alpha|^2 
+\frac{i}2  ( W_{\alpha \alpha} \bar W_\alpha  
- \bar W_{\alpha \alpha}{W}_{\alpha})\right] .
\end{aligned}
\right.
\end{equation*}
Conveniently, both of these terms are null terms, i.e., they vanish
when we substitute all factors with their $\gamma$ approximations in
\eqref{pactest}. Thus, all their contributions to $\tG^{(3)}$ and
$\tK^{(3)}$ are also classified as null.  The expression $G^{(2)} $
also appears in $(G^{(3)},K^{(3)})$, so those contributions are also
null. The remaining terms in  $(G^{(3)},K^{(3)})$ are easy to classify.

It remains to consider the output of $K^{(2),h}$ in the bilinear expressions
in \eqref{gkt}. Most of those are resonant, so in the final analysis 
we are left with $C^a( K^{(2),h}, \bar Q)$ and $D^a( K^{(2),h}, \bar W)$.
We consider the first of these expressions. Here the resonant 
interaction occurs when all three frequencies are equal, i.e., when 
the frequency of $K^{(2),h}$ is double the frequency of $Q$.
Thus, out of the symbol $C^{a}(\xi,\eta)$ we can extract a 
leading, resonant term which has symbol $C^a(\xi,\xi) = -\dfrac{3i}8$.
Similarly, out of the symbol $D^{a}(\xi,\eta)$ we can extract a 
leading, resonant term which has symbol $C^a(\xi,\xi) = -\dfrac{7i\xi}8$.

Based on this  discussion, we decompose
\begin{equation*}
\left\{
\begin{aligned}
\tilde G^{(3)} =&\tG^{(3)}_{r}+\tG^{(3)}_{nr}+\tG^{(3)}_{null}\\
\tilde K^{(3)} = & \tK^{(3)}_{r}+\tK^{(3)}_{nr}+\tK^{(3)}_{null}, 
\end{aligned}
\right.
\end{equation*}
where 
\begin{equation*}
\left\{
\begin{aligned}
\tG^{(3)}_{r}=&\ -\dfrac{3i}{8}K^{(2),h}\bar{Q},\\
\tG^{(3)}_{nr}=&\ 2C^h (K^{(2),h}, Q)+C^{a}(Q, \bar{K}^{(2), h})+B^a(G^{(2)}, \bar{W})
-Q_{\alpha}\bar{W_\alpha}^2-\bar{Q}_{\alpha}\vert W_\alpha\vert ^2,\\
\tG^{(3)}_{null}=& \ 2 C^h (K^{(2),a}, Q)+ C^{a}(Q, \bar{K}^{(2), a}) +2B^h(G^{(2)}, W)
 +B^a(W, \bar{G}^{(2)})   \\
&+\left(C^a(K^{(2),h}, \bar{Q})+\frac{3i}{8}K^{(2),h}\bar{Q}\right)  - W_\alpha (Q_\alpha \bar W_\alpha - \bar Q_\alpha W_\alpha)   ,\\
\tilde K^{(3)} _{r}= &\ -\frac{7}{8}\bar{W}_{\alpha}K^{(2), h} +  W_{\alpha}\vert Q_{\alpha}\vert ^2
-i\left( \frac{3}{4}W_{\alpha\alpha}\vert W_{\alpha}\vert^2-\frac{3}{8}\bar{W}_{\alpha\alpha}W_{\alpha}^2\right)  ,\\
\tilde K^{(3)} _{nr}=&\ A^h(W,K^{(2),h}) + A^a(W,\bar K^{(2),h}) +Q_{\alpha}^2W_{\alpha}+  \bar{W}_{\alpha}\vert Q_{\alpha}\vert ^2 
\\ & -i \left(\frac{15}{8} W_{\alpha}^2W_{\alpha \alpha} + \frac{3}{8}W_{\alpha\alpha}\bar{W}_{\alpha}^2-\frac{3}{4}\bar{W}_{\alpha \alpha}\vert W_{\alpha}\vert^2\right)   ,\\
\tilde K^{(3)} _{null}=&\ A^h(G^{(2)}, Q) + A^h(W,K^{(2),a}) 
+A^a(G^{(2)}, \bar{Q}) + A^a(W,K^{(2),a})+D^a (Q, \bar{G}^{(2)})
\\ & + D^a(K^{(2),a},\bar W)+ 
 \left( \frac{7}{8}\bar{W}_{\alpha}K^{(2),a}+ D^a(K^{(2),h}, \bar W)\right) -Q_{\alpha}(Q_\alpha \bar W_\alpha - \bar Q_\alpha W_\alpha).
\end{aligned}
\right.
\end{equation*}
Here  the leading projection in all terms
has been harmlessly discarded, since it can be moved onto the wave
packets, which decay rapidly at positive frequencies,
\[
\| (\ww,\qq) - P (\ww,\qq) \|_{H^N} \lesssim t^{-N}.
\]

We will place all cubic contributions into  the error term $err_{\gamma}$  
except for  the contribution of the resonant part  $\tG_{r}$ and $\tilde{K}_{r}$.  
We note that for the most part the exact form of the expressions above
is irrelevant.  The only significant matter is whether the
coefficients of the terms in $\tilde{G}^{(3)}_{r}$ and
$\tilde{K}^{(3)}_{r}$ are either real or purely imaginary, depending
on the term's parity.

One obstacle which we would encounter, had we tried to work directly
with $(\tG^{(3)},\tK^{(3)})$, is the presence of terms with undifferentiated 
$W$ factors; this is because our pointwise decay bounds for $W$ are 
worse than $t^{-\frac12}$.  In the proof of Proposition~\ref{p:low-en}, 
these terms are exactly $(\tG_0,\tK_0)$ which were
computed using the expansion \eqref{nft-lead}. However, the bad part 
of these terms is eliminated via the cubic normal form correction  
$(W_{[3],0}^{lhh\to h}, (W_{[3],0}^{lhh\to h})$. This correction allows us to replace
the terms $(\tG_0,\tK_0)$ with the better part $(\tG_0^{good},\tK_0^{good})$,
where we are allowed to move at least half a derivative onto the undifferentiated 
$W$ factor. The price to pay for this correction is the additional cubic term 
in the last terms in \eqref{ttgk-3}; this term still has components with undifferentiated
$W$ factors, but shares the above property that we can freely move 
at least half a derivative onto this  factor. 

The remaining cubic correction in $\ttK$, namely $\tK_2-\tK_2^{good}$,
contains only differentiated factors $W_\alpha$ and $Q_\alpha$, and is restricted
to the non-resonant case, where the three frequencies cannot be all equal.

The conclusion of the above discussion is that 
\begin{itemize}
\item[a)] The resonant part of $(\ttG^{(3)},\ttK^{(3)})$ is the same as 
the resonant part of $(\tG^{(3)},\tK^{(3)})$.

\item[b)]  All the terms in $\ttG^{(3)}$,  respectively  $\ttK^{(3)}$ can all be expressed 
as trilinear expression
\[
M_2(Q,Q,Q), \qquad M_2 (D^\frac12 W, D^\frac12 W, Q),
\]
respectively
\[
M_{5/2} (Q,Q,D^\frac12 W), \qquad M_{5/2} (D^\frac12 W, D^\frac12 W,
D^\frac12 W).
\]
\end{itemize}

We remark that in view of the better difference bounds \eqref{NFen-diff}, 
and \eqref{point-diff}, we can easily replace $(W,Q)$ in $(\ttG^{(3)},\ttK^{(3)})$
by the normal form variables $(\ttW,\ttQ)$, modulo errors which can be included
in $err_\gamma$. 

We separately consider the three cases below, namely null, non-resonant
and resonant terms.
\bigskip

{\bf 1. Null terms.} These are trilinear forms as above, with two
distinguishing features, namely that (i) they are of type $(2,1)$ and
(ii) their symbol vanishes on the resonant set, i.e., when all the
frequencies are equal. Thus we need the following:

\begin{lemma}
  Let $M$ be a trilinear operator of type $(2,1)$ as above, with
  symbol $M(\xi_1,\xi_2,\xi_3)$ and satisfying the null condition
\[
M(\xi,\xi,\xi) = 0.
\]
Then we have 
\begin{equation}\label{null}
\left |\int M(\ttQ,\ttQ,\ttQ) \bar \uu \, d\alpha\right | \lesssim \epsilon^3 t^{-\frac98},
\end{equation}
and its analogue when some or all of the arguments are $D^\frac12 W$.
\end{lemma}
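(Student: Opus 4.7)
The plan is to combine the pointwise wave packet approximation \eqref{pactest}--\eqref{pacerr} with the null condition $M(\xi,\xi,\xi)=0$. I would write
\[
\ttQ(t,\alpha) = \ttQ_{WP}(t,\alpha) + \err_0(t,\alpha), \qquad \ttQ_{WP}(t,\alpha) := t^{-\frac12} e^{i\phi(t,\alpha)} \gamma(t,\alpha/t),
\]
where $\err_0$ obeys the $L^\infty$ bound in \eqref{pacerr}. Substituting into the trilinear form and expanding gives a \emph{main term}, where all three factors are $\ttQ_{WP}$, and \emph{error terms}, where at least one factor is $\err_0$.

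The error terms are handled directly by H\"older's inequality. Placing two factors (either $\ttQ$ or $\ttQ_{WP}$) in $L^\infty$ with bound $\lesssim \epsilon t^{-\frac12}$ from \eqref{point-tt}, the factor of $\err_0$ in $L^\infty$ with bound $\lesssim \epsilon t^{-\frac58}(1+v^2)^{\sigma-4}$ from \eqref{pacerr}, and $\bar\uu$ in $L^1$ with $\|\uu\|_{L^1}\lesssim t^{\frac12}v^{-1}$, one obtains
\[
\Big|\int M(\err_0,\ttQ,\ttQ)\,\bar\uu\, d\alpha\Big| \lesssim \epsilon^3 t^{-\frac12-\frac12-\frac58+\frac12} v^{-1}(1+v^2)^{\sigma-4} \lesssim \epsilon^3 t^{-\frac98} v^{-1}(1+v^2)^{\sigma-4},
\]
which is exactly the desired bound; any derivatives in $M$ cost at most $v$ which is absorbed in the polynomial weights. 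This is where the $t^{-9/8}$ rate comes from, and it matches the lemma.

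The main term, in which the null condition is used, is
\[
t^{-\frac32}\int M\bigl(e^{i\phi}\gamma,\,e^{i\phi}\gamma,\,e^{-i\phi}\bar\gamma\bigr)(t,\cdot)\,\bar\uu(t,\cdot)\,d\alpha.
\]
Because $\uu$ is supported in a window of scale $t^{1/2}v^{-1/2}$ around $\alpha=vt$, and in this window the instantaneous frequency $\phi_\alpha(t,\alpha)=-\tfrac49\alpha^2/t^2$ equals $\xi_v$ modulo $O(vt^{-1/2})$, symbol calculus reduces the action of $M$ to multiplication by $M(\xi_v,\xi_v,\xi_v)$ modulo an $O(vt^{-1/2})$ correction. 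The leading term vanishes identically by the null hypothesis $M(\xi,\xi,\xi)=0$. To make this rigorous, I would Taylor expand the symbol as
\[
M(\xi_1,\xi_2,\xi_3) = \sum_{j=1}^3 (\xi_j-\xi_v)\, m_j(\xi_1,\xi_2,\xi_3),
\]
with $m_j$ smooth homogeneous of one order less, which converts each remainder into a trilinear form with one factor of $(\partial_\alpha - i\xi_v)\ttQ_{WP}= t^{-\frac12}e^{i\phi}[t^{-1}(\partial_v\gamma)+i(\phi_\alpha-\xi_v)\gamma]$. Both contributions are $O(t^{-\frac12})$ smaller than $\ttQ_{WP}$ itself on the support of $\uu$, using the slow variation of $\gamma$ provided by \eqref{gamma} and the spatial localization of $\uu$. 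Hence the main term is $O(\epsilon^3 t^{-\frac32}v^{-1}(1+v^2)^{-C})$, comfortably better than required.

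The main obstacle is the second step: carefully making the symbol-calculus approximation rigorous when the symbol $M$ is applied to the ansatz $\ttQ_{WP}$, since the amplitude $\gamma$ does carry a mild growth factor $t^{C\epsilon^2}$ from \eqref{gamma} and the frequency of $\ttQ_{WP}$ is only \emph{approximately} equal to $\xi_v$. One clean way to bypass this is to use the integral-kernel representation of $M$ from Proposition~\ref{bi-est}, localize each argument to a dyadic frequency shell near $\xi_v$, and then Taylor expand $M$ on the resulting diagonal. The weights $(1+v^2)^{-C}$ follow from the corresponding weights in \eqref{gamma} and \eqref{pacerr}, and the same argument applies verbatim when any of the three factors is replaced by $D^{1/2}W$, since \eqref{pactest} for $s=1/2$ provides the analogous wave packet representation.
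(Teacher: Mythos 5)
Your conceptual approach --- substitute the wave-packet ansatz $\ttQ = \ttQ_{WP} + \err_0$ and then exploit the null condition by Taylor expanding the symbol around $\xi_v$ --- is not what the paper does for this lemma. The paper never substitutes the wave-packet approximation here: it first performs the Littlewood--Paley plus hyperbolic/elliptic decomposition of $\ttQ$ (as in the proof of Proposition~\ref{p:K-S}), disposes of the elliptic, frequency-unbalanced, and balanced-but-mismatched contributions by spatial and frequency separation, and only in the fully balanced-matched case $\lambda_1\approx\lambda_2\approx\lambda_3\approx v^2$ uses the null condition, via the symbol factorization $M=\sum_{(i,j)}(\xi_i-\xi_j)M_{ij}$ combined with the $L^2$ bound on $L\ttQ_{\lambda,\mathrm{hyp}}$ from \eqref{app3}.

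Two of your steps have genuine gaps. First, the decomposition $\ttQ = \ttQ_{WP} + \err_0$ is only defined on $\Omega$ (where $\gamma$ lives), while $M$ is a Coifman--Meyer multiplier whose kernel has only polynomial decay; so $M(\ttQ_{WP},\ttQ_{WP},\ttQ_{WP})$ at $\alpha\approx vt$ involves the arguments at remote positions where your decomposition is not available. This is precisely what the paper's preliminary dyadic and hyperbolic/elliptic split takes care of. Second, the Taylor expansion $M(\xi) = \sum_j (\xi_j-\xi_v)m_j(\xi)$ produces coefficients $m_j$ that depend on the ray parameter $v$ and are not homogeneous, hence not covered by Propositions~\ref{bi-est}--\ref{tri-est}; and the residual amplitude $(\phi_\alpha(\alpha)-\xi_v)$ is only small on the support of $\uu$, not on the entire (kernel-spread) support of the trilinear convolution. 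The paper's choice of factoring out $\xi_i - \xi_j$ instead gives a genuine homogeneous symbol, and after converting to $L = \partial_\alpha - i\frac{4\alpha^2}{9t^2}$ the $\alpha^2/t^2$ parts cancel \emph{exactly} between the two affected factors, leaving only $L\ttQ$ contributions which are small in $L^2$. You acknowledge the gap and propose to close it by dyadically localizing each argument; if you carry that out you essentially reproduce the paper's argument, and the $\xi_i-\xi_j$ factorization is the cleaner way to organize it. A minor further point: your error-term estimate applies the trilinear bound with output in $L^\infty$ and pairs with $\uu\in L^1$, but $L^\infty\times L^\infty\times L^\infty\to L^\infty$ lies outside the range $1<r<\infty$ of Proposition~\ref{tri-est}; one should instead take $r$ large and finite and interpolate $\err_0$ between its $L^2$ and $L^\infty$ bounds from \eqref{pacerr}, accepting an $O(1/r)$ loss in the power of $t$ that is absorbed by the available slack.
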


\begin{proof}
  We separate each of the three factors into the hyperbolic part and
  the elliptic part, as in the proof of Proposition~\ref{p:K-S}. We consider
several cases.

\medskip

 {\bf a) Elliptic terms.} If  at least one elliptic term is present, then that term has better
  decay, so we can neglect the null condition and use directly
  Proposition~\ref{tri-est}, estimating
\[
\|M(\ttQ_{ell},\ttQ,\ttQ) \|_{L^p} \lesssim t^{-\frac98}, \qquad 2 \leq p < \infty .
\]
Then we bound the integral using H\"older's inequality for $p$ close to infinity.

It remains to consider $L(\ttQ_{hyp},\ttQ_{hyp},\ttQ_{hyp})$.  Here the dyadic frequency pieces
are matched to essentially disjoint spatial regions. We can restrict ourselves to 
frequencies $t^{-\frac16} \leq \lambda \leq t^\frac16$, as for the remaining frequencies 
we have a better bound (see Proposition~\ref{p:K-S}), and we can group them together 
with the elliptic part.

\medskip

{\bf b) Unbalanced frequencies.}
In this case we have spatial separation between the arguments of $M$.
Separating variables in the symbol of $M$, for fixed dyadic frequencies $\lambda_1$,
$\lambda_2$, $\lambda_3$ in the above range it suffices to look at $M$ 
with symbol of the form
\[
M(\xi) = l_1(\xi_1) l_2(\xi_2) l_3(\xi_3) ,
\]
where each of $l_1$, $l_2$, $l_3$ are smooth on the dyadic scale.
Then their kernels are rapidly decreasing on the $\lambda_j^{-1}$ scale.
But $\lambda_j^{-1} < t^{-\frac16}$, while the supports of $\ttQ_{\lambda_j,hyp}$ are 
at least $t^{\frac56}$ separated. Thus for distinct $\lambda_1$,
$\lambda_2$, $\lambda_3$ we obtain rapid decay,
\[
\| M(\ttQ_{\lambda_1,hyp},\ttQ_{\lambda_2,hyp},\ttQ_{\lambda_3,hyp})\|_{L^2 \cap L^\infty}
\lesssim \epsilon^3 t^{-N}.
\]
 
 \medskip

 {\bf c) Balanced mismatched frequencies.}  Here we consider the case
 $\lambda_1 \approx \lambda_2 \approx \lambda_3 \not\approx v^2$. Then
 the arguments of $M$ are spatially localized away  from the support of $\uu$, and
 arguing as in the previous case we see that the integral \eqref{null} decays rapidly.

 \medskip

 {\bf d) Balanced matched frequencies.}  
 Here we consider the remaining case
 $\lambda_1 \approx \lambda_2 \approx \lambda_3 \not\approx v^2 = \lambda$. It is only in this case
that we need to use the null condition \eqref{null}. Since all frequencies are 
comparable, the symbol of $M$ is smooth on the dyadic scale. Due to the null condition,
we can decompose $M$ as a sum of symbols of the form
\[
M = \sum_{(i,j)} (\xi_i - \xi_j ) M_{ij}(\xi),
\]
with $M_{ij}$ smooth. Separating variables we can factor out $M_{ij}$,
and be left with, say, $M = \xi_1 - \xi_2$. Then we can write $M$ in the form
\[
M(\ttQ^1_{\lambda,hyp},  \ttQ^2_{\lambda,hyp},  \ttQ^3_{\lambda,hyp})
= M\ttQ^1_{\lambda,hyp}  \ttQ^2_{\lambda,hyp}  \bar \ttQ^3_{\lambda,hyp} - \ttQ^1_{\lambda,hyp}  L \ttQ^2_{\lambda,hyp}  \bar \ttQ^3_{\lambda,hyp}.
\]
Here $\ttQ^i_{\lambda,hyp}$ are obtained by applying zero order
multipliers to $\ttQ$, so satisfy the same bounds as $\ttQ$, as described in
Section~\ref{s:ks}.  Now we use the better $L^2$ bound for
$M\ttQ^1_{\lambda,hyp}$ in order to estimate it in $L^2$. A similar
computation applies if one of the indices $(i,j)$ corresponds to the
conjugated variable.
\end{proof}

\bigskip

{\bf 2. Non-resonant terms.}
 These are trilinear forms as above, with 
the property that  they are either of type $(3,0)$ or of type $(1,2)$.
For these we have:

\begin{lemma}
  Let $M$ be a trilinear operator of type $(3,0)$ or of type $(1,2)$ as above.
Then we have 
\begin{equation}\label{non-res}
\left |\int M(\ttQ,\ttQ,\ttQ) \bar \uu \, d\alpha\right | \lesssim \epsilon^3 t^{-\frac98},
\end{equation}
and its analogue when some or all of the arguments are $D^\frac12 \ttW$.
\end{lemma}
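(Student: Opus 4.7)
The plan is to follow the structure of the preceding null lemma verbatim, splitting each factor $\ttQ$ (or $D^{1/2}\ttW$) into hyperbolic and elliptic parts via Proposition~\ref{p:K-S}, and then dyadically decomposing the hyperbolic components, restricting to the range $t^{-1/6} \le \lambda \le t^{1/6}$. The four-case split (a) elliptic factors, (b) unbalanced hyperbolic frequencies, (c) balanced mismatched hyperbolic frequencies, (d) balanced matched hyperbolic frequencies carries over without change. Cases (a)--(c) never used the null hypothesis $M(\xi,\xi,\xi)=0$: they relied only on the improved elliptic decay of Proposition~\ref{p:K-S}, the essentially disjoint supports of $\ttQ_{\lambda,hyp}$ at distinct dyadic scales, and the spatial separation of the packet $\uu$ at scale $t^{1/2}v^{-1/2}$ around $\alpha\approx vt$ from packets localized at rays $\alpha \approx \lambda^{1/2}t$ with $\lambda \not\approx v^2$. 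Hence cases (a)--(c) give bounds of order $\epsilon^3 t^{-9/8}$ or better, identically to the null case.

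The only real change occurs in case (d), where $\lambda_1 \approx \lambda_2 \approx \lambda_3 \approx v^2$ and all three hyperbolic factors are supported near the ray $\alpha\approx vt$. For type $(2,1)$ (null) forms, the combined phase of $e^{-i\phi}M(\ttQ,\ttQ,\ttQ)$ vanishes at coincident frequencies, forcing the use of the null condition. By contrast, for type $(3,0)$ and $(1,2)$ forms the interaction is \emph{non-resonant}: the relevant combination of dispersion relations satisfies the quantitative lower bound from \eqref{nonres-tri}, of order $\lambda^{3/2}\approx v^3$, which is precisely the hypothesis that drives Proposition~\ref{p:nf-cubic}. I would use that proposition (applied at the common dyadic scale $\lambda$ to the three factors $\ttQ^i_{\lambda,hyp}$) to produce a cubic normal form corrector $(W_{[3]},Q_{[3]})$ in the appropriate $\B^3_0,\C^3_0$ class satisfying
\[
W_{[3],t}+Q_{[3],\alpha} = 0, \qquad Q_{[3],t}+i W_{[3],\alpha\alpha} = M(\ttQ_{\lambda,hyp},\ttQ_{\lambda,hyp},\ttQ_{\lambda,hyp})+\text{quartic},
\]
(and the symmetric reformulation when $M$ lives on the $\tG$-side).

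Next I would pair with $\bar\uu$ and integrate by parts in time, using that $(\ww,\qq)$ solves the linear system \eqref{ww-lin} up to the $O(t^{-3/2})$ error $\ggg$ computed in \eqref{erori}. The boundary-in-time term is harmless (it contributes a total derivative which, combined with the $\gamma\to\infty$ decay, is absorbable), while the interior term transfers one time derivative onto $\bar\uu$, which is exchanged with a spatial derivative via the linear flow, effectively gaining a factor of $t^{-1}$ from the $\ggg$ error bound. Combining this gain with the $L^2$ control on $(W_{[3]},Q_{[3]})$ coming from Proposition~\ref{bi-est} applied to the dispersive bounds \eqref{point-tt}, together with the spatial localization of the wave packet on the scale $t^{1/2}v^{-1/2}$, yields the claimed $\epsilon^3 t^{-9/8}$ bound in case (d). The extension to arguments involving $D^{1/2}\ttW$ in place of $\ttQ$ is immediate since the Corollary following Proposition~\ref{p:K-S} gives $D^{1/2}\ttW$ the same pointwise and $L^2$ bounds as $\ttQ$.

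The main obstacle will be case (d): one must verify that the cubic normal form correction supplied by Proposition~\ref{p:nf-cubic} in the matched-frequency regime (which is, admittedly, not literally the unbalanced regime $|\xi_1+\xi_2-\xi_3|\ll\min\{|\xi_1|,|\xi_2|\}$ stated there, but is handled by the same algebraic phase bound once the sign pattern is $(3,0)$ or $(1,2)$) produces correctors whose $L^2$ size, together with the derivative loss in the integration by parts, lines up to give precisely $t^{-9/8}$ and not something worse. A secondary nuisance is bookkeeping the quartic remainders from substituting the cubic normal form into the pairing; these should be controllable by brute force using the energy bounds \eqref{NFen} together with the pointwise decay in Proposition~\ref{p:K-S}, since they come with an extra factor of $\epsilon$ and an extra derivative is available on at least one factor.
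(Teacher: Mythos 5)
Your reduction to case (d) and the observation that (a)--(c) carry over verbatim from the null lemma (since those cases never used the null condition) is correct and matches the paper. However, in case (d) your mechanism diverges from the paper's and has a real gap.

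The paper does not invoke any cubic normal form or integration by parts in time here. Instead it substitutes the wave packet approximation $\ttQ^i_{\lambda,hyp} \approx t^{-1/2}\gamma^i e^{i\phi}$ from Proposition~\ref{p:diff} into the trilinear form and observes that the phases \emph{do not cancel}: for a $(3,0)$ form the integrand carries $e^{3i\phi}\cdot e^{-i\phi}=e^{2i\phi}$ (and $e^{-2i\phi}$ for $(1,2)$), in contrast to the null $(2,1)$ case where the phases exactly cancel. Since $\phi' \approx -\frac{4}{9}v^2$ is bounded away from zero on the support of $\uu$, a single non-stationary-phase integration by parts in $\alpha$ — costing one derivative on $\chi$ or on a $\gamma^j$, each controlled by \eqref{gamma} — gains the additional $t^{-1/2}$ needed to reach $t^{-9/8}$ after H\"older. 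That is the whole argument.

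Your proposed route has two problems. First, Proposition~\ref{p:nf-cubic} is proved under the unbalanced hypothesis $|\xi_1+\xi_2-\xi_3| \ll \min\{|\xi_1|,|\xi_2|\}$, which is the opposite of case (d), where all four frequencies (including the output) are comparable to $v^2$. Your parenthetical that "the same algebraic phase bound" handles the balanced case is not substantiated: the proposition's whole purpose is to control the symbol near the low-frequency singularity, and the denominator estimate \eqref{nonres-tri} is phrased precisely for $\lambda_0 \ll \lambda_1 \le \lambda_2 \approx \lambda_3$. In the balanced matched regime the phase lower bound is different (of order $\lambda^{3/2} \approx v^3$, uniformly), and while one could in principle write down a corrector, that corrector has the same size as the original cubic term — nothing is gained without a further argument. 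Second, you propose to "integrate by parts in time," but the quantity being estimated, $\int M(\ttQ,\ttQ,\ttQ)\bar\uu\,d\alpha$, is a spatial integral at \emph{fixed} time, and the lemma's conclusion is a pointwise-in-$t$ bound feeding into the pointwise-in-$t$ error estimate \eqref{sigma} for $err_\gamma$. Trading the cubic term for a total time derivative produces a boundary term that does not fit the $err_\gamma$ structure the paper requires; making that work would demand restructuring the ODE analysis in the surrounding proposition, which your sketch does not address. The paper's direct stationary-phase argument sidesteps both issues.
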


\begin{proof}
The elliptic case, the unbalanced case and the balanced mismatched case 
are similar to the previous Lemma, so we are left with the balanced matched case.
Suppose we are in the $(3,0)$ case; the $(1,2)$ case is similar.
Separating variables we reduce the problem to the case when $L = 1$,
and our integral is 
\[
\int \ttQ^1_{\lambda,hyp}  \ttQ^2_{\lambda,hyp}  \ttQ^3_{\lambda,hyp} \bar \uu \, d\alpha.
\]
Denoting by $\gamma^j$ the variables associated to $\ttQ^i_{\lambda,hyp}$ as in 
Proposition~\ref{p:diff}, we can replace $\ttQ^i_{\lambda,hyp}$ by $t^{-\frac12}
\gamma^j e^{i\phi}$ with 
a better error. Thus, we can replace our integral by 
\[
t^{-\frac32} 
\int \gamma^1  \gamma^2  \gamma^3  \chi\left(\frac{\alpha-vt}{t^\frac12v^{-\frac12}}\right) e^{2i\phi}  \, d\alpha + err_\gamma .
\]
Here the important fact is that the phases did not cancel. Thus using nonstationary
we can put one derivative on either $\gamma^j$ or on $\chi$, each of which gains at least 
a $t^{-\frac12}$ factor. Then it suffices to conclude by H\"older's inequality.
\end{proof}

\bigskip

{\bf 3. Resonant terms.}
As we have organized our cubic nonlinearity, our non-resonant terms contain purely 
multiplicative terms. Using the approximation in  Proposition~\ref{p:diff}, we obtain the 
leading contribution in $\dot \gamma$.  To compute that, we recall that
\[
\ttQ \approx t^{-\frac12} e^{i\phi} \gamma, \qquad \ttW_\alpha \approx   
- \frac{2i \alpha}{3t}
t^{-\frac12} e^{i\phi} \gamma.
\]
Recalling that
\[
K^{(2),h} = -  \ttQ_\alpha^2  +\frac{3i}2  \ttW_{\alpha \alpha} \ttW_\alpha,   
\]
the leading term in  $\tG^{(3)}_{res}$ is computed as follows:
\[
\begin{split}
\tG^{(3)}_{res}  = & \ -\dfrac{3i}{8}  \left(-  \ttQ_\alpha^2  +\frac{3i}2  \ttW_{\alpha \alpha} \ttW_\alpha\right)   
\bar{\ttQ} 
\\ \approx  &  \  \frac{i\alpha^4}{27 t^4} t^{-\frac32} \gamma(t,v) |\gamma(t,v)|^2 e^{i\phi}.
\end{split}
\]
A similar computation for $\tK^{(3)}_{res}$ gives
\[
 \begin{split}
\tK^{(3)}_{res}  = & \ -\frac{7}{8}\bar{\ttW}_{\alpha}\left( -  \ttQ_\alpha^2  +\frac{3i}2  \ttW_{\alpha \alpha} \ttW_\alpha,  \right)  +  \ttW_{\alpha}\vert \ttQ_{\alpha}\vert ^2
-i\left( \frac{3}{4}\ttW_{\alpha\alpha}\vert \ttW_{\alpha}\vert^2-\frac{3}{8}\bar{\ttW}_{\alpha\alpha}\ttW_{\alpha}^2\right)
\\ \approx  &  \  -\frac{2\alpha^5i}{27 t^5}  t^{-\frac32} \gamma(t,v) |\gamma(t,v)|^2 e^{i\phi}.
\end{split}
\]
Then the leading contribution in $\dot \gamma$ is 
\[
\begin{split}
I =  &  \ \int i \tG^{(3)}_{res} \bar \ww_\alpha  + \tK^{(3)}_{res} \bar \qq \, d\alpha 
=   \ \int \left( \tK^{(3)}_{res}+   \frac{2v}{3}\tG^{(3)}_{res}\right)  \bar \uu   \, d\alpha + err_\gamma
\\ = & \  \int  -\frac{8i v^5 }{81} v^{-\frac12} t^{-\frac32} \chi\left(\frac{\alpha-vt}{t^\frac12v^{-\frac12}}\right)  \gamma(t,\alpha) |\gamma(t,\alpha)|^2\, d\alpha + err_\gamma .
\end{split}
\]
Since $\chi$ has integral $1$ and, by \eqref{gamma}, we have good
control in $L^2$ over the derivative of $\gamma$, the above integral
is equal to
\[
I =  -  \frac{8 i v^5}{81 t }\gamma(t,\alpha) |\gamma(t,\alpha)|^2 + Err.
\]
Therefore the conclusion of the proposition follows. 
\end{proof}

\bibliography{ww}{}
\bibliographystyle{plain}



\end{document}